\newcommand{\SL}[1]{{\color{red}{#1}}} 
\newtheorem{theorem}{Theorem}
\newtheorem{lemma}[theorem]{Lemma}
\newtheorem{proposition}[theorem]{Proposition}
\newtheorem{corollary}[theorem]{Corollary}
\theoremstyle{definition}
\newtheorem{definition}[theorem]{Definition}
\newtheorem{rhproblem}[theorem]{RH problem}
\newtheorem{remark}[theorem]{Remark}
\let\Re\undefined
\let\Im\undefined
\DeclareMathOperator{\Re}{Re}
\DeclareMathOperator{\Im}{Im}
\numberwithin{equation}{section}
\numberwithin{theorem}{section}
\def\XXint#1#2#3{{\setbox0=\hbox{$#1{#2#3}{\int}$}
		\vcenter{\hbox{$#2#3$}}\kern-.5\wd0}}
\date{}                     
\begin{document}

\title{Orthogonal polynomials in the spherical ensemble with two insertions}


\author[1]{Sung-Soo Byun}
\affil[1]{\small Department of Mathematical Sciences and Research Institute of Mathematics, Seoul National University, Seoul 151-747, Republic of Korea}

\author[2]{Peter J. Forrester}
\affil[2]{School of Mathematical and Statistics, The University of Melbourne, Victoria 3010, Australia}
\author[3]{Arno B.J. Kuijlaars}

\author[2,3]{Sampad Lahiry}
\affil[3]{
Department of Mathematics, Katholieke Universiteit Leuven, Celestijnenlaan
200 B bus 2400, 3001 Leuven,  Belgium}

\affil[  ]{sungsoobyun@snu.ac.kr; \,pjforr@unimelb.edu.au; \ arno.kuijlaars@kuleuven.be; \, sampad.lahiry@kuleuven.be}

\maketitle

\begin{abstract}
We consider asymptotics of planar orthogonal polynomials $P_{n,N}$ (where $\mathrm{deg}P_{n,N}=n$) with respect to the weight $$\frac{|z-w|^{2NQ_1}}{(1+|z|^2)^{N(1+Q_0+Q_1)+1}}, \quad(Q_0,Q_1 > 0)$$ in the whole complex plane. With $n, N\rightarrow\infty$
and $N-n$ fixed,
we obtain the strong asymptotics of the polynomials, asymptotics for the weighted $L^2$ norm and the limiting zero counting measure.
These results apply to the pre-critical phase of the underlying two-dimensional Coulomb gas system, when the support of the equilibrium measure  is simply connected. Our method relies on specifying the mother body of the two-dimensional potential problem. It relies too
on the fact that the planar orthogonality can be rewritten as a non-Hermitian contour orthogonality. This allows us to perform the Deift-Zhou steepest descent analysis of the associated $2\times 2$ Riemann-Hilbert problem.
\end{abstract}

\tableofcontents

\section{Introduction and statement of results}

\subsection{Underlying Coulomb gas, random matrix ensemble and relation to planar orthogonal polynomials}

Let $\{ \mathbf{r}_j \}_{j=1}^N$ be $N$ points on the surface of the sphere of radius $\frac{1}{2}$ embedded in $\mathbb R^3$ and centred at the origin. Let $|| \mathbf x ||$ denote the usual Euclidean length in $\mathbb R^3$. Suppose these $N$ unit points are
in fact charges obeying Poisson's equation as defined on the surface of the sphere. This means that a point at $\mathbf x$ and a point at
$\mathbf x'$ interact via the pair potential $- \log || \mathbf x - \mathbf x' ||$; see e.g.~\cite[Eq.~(15.105)]{Fo10}. In addition, let
there be two fixed charges of (macroscopic) strengths $NQ_0$ and $NQ_1$ at positions $\mathbf R_0$ and $\mathbf R_1$, with $Q_0,Q_1 > 0$. Up to a multiplicative
constant, the corresponding Boltzmann factor has the functional form
\begin{equation}\label{F1}
||  \mathbf R_0 - \mathbf R_1||^{\beta N^2 Q_0 Q_1}   \prod_{l=1}^N || \mathbf  R_0  - \mathbf r_l ||^{\beta N Q_0} || \mathbf  R_1  - \mathbf r_l ||^{\beta N Q_1} 
\prod_{1 \le j < k \le N} || \mathbf r_k - \mathbf r_j ||^\beta,
\end{equation}
where $\beta >0$ is the inverse temperature.
By rotational invariance of the sphere, it is always possible to choose $\mathbf R_0$ to be at the south pole $(0,0,-\frac12)$, which we henceforth assume.

From the viewpoint of charge neutrality in the Coulomb gas picture, it has been noted in  \cite{BFL25} that associated with fixed charges at positions
$\mathbf R_0$ and $\mathbf R_1$ are spherical caps of area $ \pi Q_0/(1+Q_0 + Q_1 )$ and $ \pi Q_1/(1+Q_0 + Q_1 )$, respectively. 
As already evident from potential theoretic viewpoints  \cite{CC03,BDSW18,CK19,LD21,CK22} the Coulomb gas exists in two phases depending on there being no overlap of the spherical caps (referred to as the \textit{post-critical phase}), or the spherical caps overlapping (referred to as the \textit{pre-critical phase}).
With $(\theta, \phi)$ the polar angle, azimuthal angle pair corresponding to the point $\mathbf R_1$, we have from \cite[Eq.~(1.7)]{BFL25} that the condition for no
overlap, and thus the post-critical phase is
\begin{equation}\label{F1a}
 \cos^2 (\theta/2)  >   {1 \over (\gamma_1 + \gamma_2 + 2)^2} \Big ( \sqrt{(\gamma_1 + 1) (\gamma_1 + \gamma_2 + 1)} + \sqrt{\gamma_2 + 1} \Big )^2 
\Big |_{\substack{\gamma_1 = -1+ Q_0/Q_1 \\  \gamma_2 =1/Q_1}}.
\end{equation}
Conversely, if this inequality is in the other direction, the Coulomb gas system is in the pre-critical phase.

Suppose now that the points on the sphere are stereographically projected to the complex plane, viewed to be tangent to the north pole; this construction is illustrated e.g.~in \cite[Figure 15.2]{Fo10}. With $\mathbf r$, $\mathbf r'$ points on the sphere and $z$, $z'$ the corresponding points on the plane, we then have that
\cite[equivalent to Eq. (15.126)]{Fo10}
\begin{equation}\label{F2}
||  \mathbf r - \mathbf r' || = {|z - z' | \over (1 + |z|^2)^{1/2}  (1 + |z'|^2)^{1/2}}.
\end{equation} 
We have too the relation between the differential on the surface of the sphere $d S$ say, and the flat measure on the complex plane viewed as $\mathbb R^2$
\cite[Eq.~(15.127)]{Fo10}
\begin{equation}\label{F3}
d S = {1 \over (1 + |z|^2)^2} \, dx \, dy.
\end{equation} 
 These two facts together give that the stereographically projected form of 
 (\ref{F1}) is equal to
 \begin{equation}\label{F4}
{1 \over (1 + |w|^2)^{\beta Q_0 Q_1 N^2} }   \prod_{l=1}^N { | w - z_l|^{\beta Q_1 N}  \over (1 + |z_l|^2)^{\beta (1+Q_0 + Q_1  ) N/2+2-\beta/2 } }
\prod_{1 \le j < k \le N} | z_k - z_j |^\beta,
\end{equation} 
where $w$ is the stereographic projection onto the complex plane of $\mathbf R_1$. It is convenient to rotate the sphere about the veritical axis so that
$w > 0$.
The equation \eqref{F1a} can be rewritten in terms of $w$. Thus the
 requirement that the spherical caps associated with two charges overlap restricts $Q_0,Q_1,w$ to be such that \cite[Eq.~(2.74)]{BFL25}
 \begin{equation}  \label{def of w critical point}
 w  > w_{\rm cri} := \Big( 2Q_0Q_1+Q_0+Q_1 + 2\sqrt{ Q_0 Q_1(1+Q_0)(1+Q_1) } \Big)^{-1/2}    .
 \end{equation}

Now set
 \begin{equation}\label{F5}
 \beta = 2, \quad Q_1 N = r, \quad Q_0 N = K
 \end{equation} 
 in (\ref{F4}). The  factor in (\ref{F4}) independent of $w$ is then
 \begin{equation}\label{F6}
 \prod_{l=1}^N {1 \over (1 + |z_l|^2)^{ K + r +  N + 1 } }
\prod_{1 \le j < k \le N} | z_k - z_j |^2.
\end{equation}
As revised in \cite[\S 3.1]{BFL25} this functional form, up to proportionality, is the eigenvalue probability density function for the particular random matrix ensemble SrUE${}_{(N,K+r)}$. Here the (non-Hermitian) ensemble SrUE${}_{(N,s)}$ is formed from random matrices $(G^\dagger G)^{-1/2} X$ where $G$ is an $(N+s) \times N$ complex standard Gaussian matrix and $X$ is an $N \times N$  complex standard Gaussian matrix; the case $s=0$ corresponds to the complex spherical ensemble \cite[\S 2.5]{BF25}. This allows (\ref{F4}) with the substitution (\ref{F5}) to be identified as 
proportional to 
 \begin{equation}\label{F7}
{1 \over (1 + |w|^2)^{2 r K} } \Big \langle   \prod_{l=1}^N  | w - z_l|^{2r} \Big \rangle_{{\rm SrUE}_{(N,K+r)}}.
\end{equation}
In words the second factor is the ensemble average with respect to ${\rm SrUE}_{(N,K+r)}$ of the $2r$-th power of the absolute value of the characteristic polynomial $ \prod_{l=1}^N   | w - z_l|$. A result from \cite{Fo25} gave that this ensemble average
is, assuming $K \ge r$, equal to the different ensemble average
 \begin{equation}\label{F8}
 \Big \langle   \prod_{l=1}^r   ( |w|^2  + t_l)^{N} \Big \rangle_{{\rm JUE}_{r,(0,K-r)}}.
 \end{equation}
 Here ${\rm JUE}_{r,(0,K-r)}$ denotes the (Hermitian) Jacobi unitary ensemble of $r$ eigenvalues $\{ t_l \}$ supported on the interval $(0,1)$
 and distributed according to the probability density function proportional to
 $$
 \prod_{l=1}^r (1 - t_l)^{K-r} \prod_{1 \le j < k \le n} | t_k - t_j|^2;
 $$
 for more on this see \cite[Ch.~3]{Fo10}. Since the roles of $N$ and $r$ in the average in (\ref{F7}) and that of (\ref{F8}) are reversed their equality is referred to
 as a duality identity. In \cite{BFL25} this was used as one tool to study the large $N$ form of the configuration integral associated with the $\beta = 2$
 case of the Boltzmann  factor (\ref{F1}) or equivalently (\ref{F4}), in both the pre- and post-critical phases.

A salient feature of (\ref{F1})  and  (\ref{F4}) with $\beta = 2$ is that they  both specify determinantal point processes. Consider for definiteness  (\ref{F4}). Excluding the first factor, this can be written
  \begin{equation}\label{F9}
  \prod_{l=1}^N e^{-N V(z_l)} \prod_{1 \le j < k \le N }|z_k - z_j|^2,
  \end{equation}
  where
 \begin{equation}\label{F10}
    V(z)=\Big(1+\frac{1}{N}+Q_0+Q_1\Big)\log (1+|z|^2)-2Q_1 \log |z-w|.
\end{equation}  
Introduce monic orthogonal polynomials $\{ P_{k,N}(z) \}_{k=0}^{N-1}$ of degree $k$ such that
 \begin{equation}\label{F11}
\int_{\mathbb C}P_{l,N}(z)\overline{P_{k,N}(z)}e^{-N V(z)} \, dA(z)=h_{k,N}\, \delta_{k,l}, \quad (l,k=0,1\dots,N-1),
\end{equation}
where $dA(z) = dx \,  dy$. One notes that $V(z)$ and thus the polynomials $P_{k,N}(z) $ depend on $w$, although this 
dependance is suppressed in the notation. Standard theory (see e.g.~\cite[\S 15.3]{Fo10}) gives that
 \begin{equation}\label{F12}
 \int_{\mathbb C}  dA(z_1) \cdots  \int_{\mathbb C}  dA(z_N) \,   \prod_{l=1}^N e^{-N V(z_l)} \prod_{1 \le j < k \le N} |z_k - z_j|^2
 = N! \prod_{l=0}^{N-1} h_{l,N},
 \end{equation}
 and
  \begin{equation}\label{F13}
  \rho_{(k)}(z_1,\dots,z_k)
  =\det[K_{N}(z_i,z_j)]_{i,j=1}^{k},
   \end{equation} 
with 
 \begin{equation}\label{F14}
K_{N}(w,z)=e^{-\frac{N}{2}\left(V(w)+V(z)\right)}\sum_{l=0}^{N-1}\frac{P_{l,N}(w)\overline{P_{l,N}(z)}}{h_{l,N}}.
 \end{equation}
 In (\ref{F13}), $ \rho_{(k)}$ denotes the $k$-point correlation function, defined as a suitable normalisation times (\ref{F9}) integrated over $z_{k+1},\dots,z_N$; see e.g.~\cite[\S 5.1.1]{Fo10}. Of particular interest is the large $N$ form of the partition function (\ref{F12}), which will be dependent on the phase (post or pre-critical), as well as the correlation function (\ref{F14}), which in fact can be proved to exhibit universal properties \cite{AHM11,AHM15,HW21} in the microscopic scaling regime.
The study of \cite{HW21} (and its subsequent work \cite{Hed24}) is particularly relevant to the present work, as it derives the asymptotic form of the orthonormal polynomials $h_{n,N}^{-1/2} P_{n,N}(z)$ outside and on the boundary of the droplet in the large $N, n$ limit, with $n/N \in (0,1]$ fixed. This result holds for a general class of potentials $V(z)$ under certain conditions, such as the connectivity of the droplet. Here, the droplet refers to the support of the equilibrium measure associated with $V$; see e.g. \cite{ST97}.    
 
 In the present work we take up the problem of obtaining asymptotics of $ h_{n,N}$ and $P_{n,N}(z)$ separately
 in the case that $V(z)$ is given by (\ref{F10}), using
 ideas introduced in the work \cite{BBLM15}, in the case that  
 \begin{equation}\label{F10a}
    V^{\rm g}(z)= |z|^2 -2Q \log |z-w|.
\end{equation}  
The term $-2Q \log |z-w|$ is the potential due to an external charge, the effect of which in two-dimensional Coulomb gas systems has been studied in the recent works
\cite{B24,BCMS,BKS23,AKS23,KLY23,BY25,BY23,DS22,WW19,BSY24,BKSY25}.
A preliminary to obtaining the asymptotics requires developing a mother body theory relating to the equilibrium measure associated with (\ref{F9}), which is of independent interest. 

\subsection{Statement of results}
For a compactly supported finite Borel measure $\mu$ we denote the logarithmic potential and corresponding logarithmic energy as
\begin{equation} \label{def of U mu and I mu}
 U^{\mu}(z)=\int \log \frac{1}{|z-s|}\, d\mu(s),\qquad  I(\mu)=\int U^{\mu}(z)\, d\mu(z).
\end{equation}
The equilibrium measure associated with the large-$N$ limit of the potential \eqref{F10} 
 corresponds to the unique probability measure $\nu_0$ which minimizes the energy functional
\begin{equation}\label{enf}
I(\nu)-2Q_1\int \log|w-u| \, d\nu(u)+(1+Q_0+Q_1)\int \log (1+|u|^2) \, d\nu(u) 
\end{equation}
over all probability measures supported on the complex plane.
The external field is strongly admissible (i.e.~has has sufficient growth at infinity \cite{ST97}), hence the equilibrium measure is compactly supported.  We denote 
$\mathrm{supp}(\nu_0)=\Omega$.
For the equilibrium measure, it is well known \cite{ST97} that there exists a real constant $\ell_{\mathrm{2D}}$ (the precise form is given in \cite[Prop.~2.9]{BFL25}) such that the variational conditions 
\begin{equation}\label{frost1}
    \mathcal{U}_{2D}(z)=2U^{\nu_0}(z)-2Q_1 \log |z-w|+\left(1+Q_0+Q_1\right)\log (1+|z|^2)+\ell_{\mathrm{2D}}\begin{cases}=0\quad z\in \Omega,\\
    \geq 0 \quad z\in \mathbb C\setminus \Omega
    \end{cases}
\end{equation}
hold.

The particular (connected) domain $\Omega$ solving (\ref{frost1}) in the pre-critical phase has been determined in \cite{BFL25}.
There (in Prop.~2.7) the parameters $\rho, a, b$ of the conformal map
\begin{equation}\label{confm}
        f(u):= \frac{\rho}{u}\frac{1-bu}{1-au},
    \end{equation}
  from the interior of the unit disc $\mathbb{D}$ to the exterior of the droplet   $\mathbb C \setminus \Omega$ have been specified. The details are not required
  in the present work, except for the  particular features 
  \begin{equation}\label{parameters1}
    \rho > 0  , \qquad 0<a<1, \qquad a<b.
  \end{equation} We also point out that $b<1$ if and only if $0$ lies in the interior of the droplet. Another key feature of these parameters that we will use is   \cite[Eq.~(2.60)]{BFL25}
  \begin{equation} \label{a b ratio Q0Q1}
      \frac{b\rho^2}{a}=\frac{1+Q_1}{Q_0}.
  \end{equation}
  One comments too that 
  $\nu_0$ is absolutely continuous with respect to the two-dimensional Lebesgue measure $dA(u)$ with density given by
    \begin{equation}
        \frac{d\nu_0(u)}{dA(u)}=\frac{(1+Q_0+Q_1)}{\pi}\frac{1}{(1+|u|^2)^2}\, \mathbbm{1}_{\Omega}(u),
    \end{equation}
where $\mathbbm{1}_{\Omega}$ is the characteristic function on $\Omega$.

\begin{figure}
    \centering\includegraphics[width=1\linewidth]{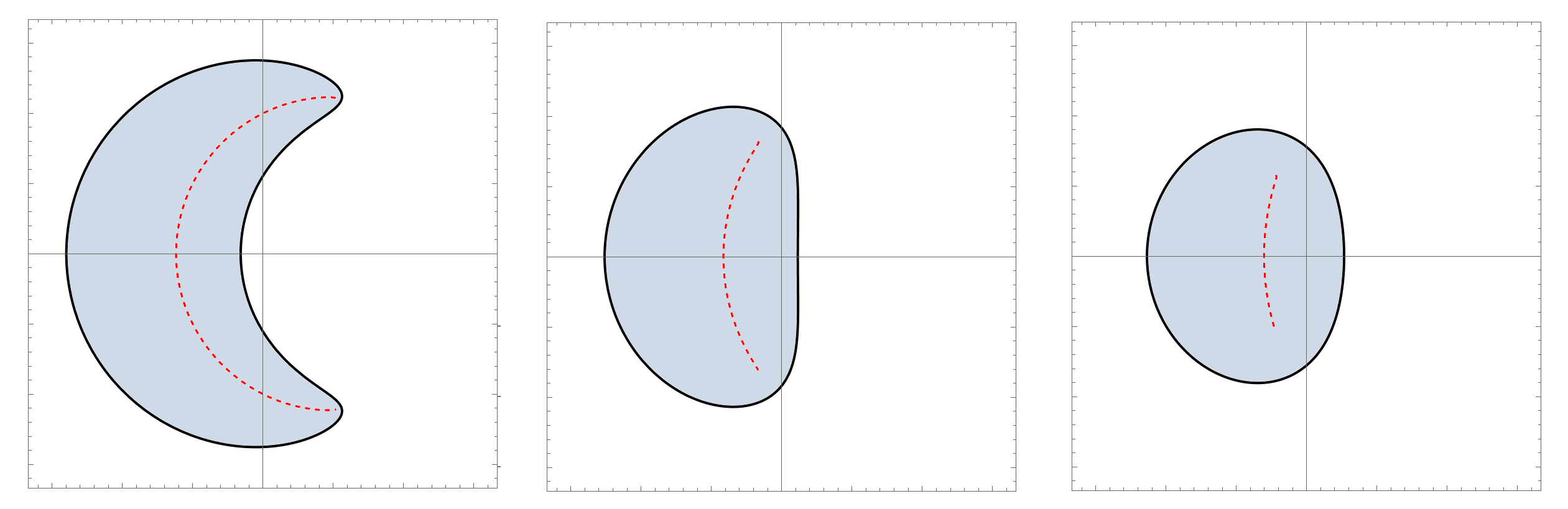}
    \caption{Evolution of the droplet (shaded blue) and the mother body (dashed red) as $w$ increases. The figures correspond to $Q_0=Q_1=1$ and $w=0.5 , 1, 2$ (from left to right). }
    \label{Dropmoth}
\end{figure}

Our first result is concerning the mother body (see e.g.~the introduction of \cite{BS16}, \cite{GTV14}  for contextual and historical remarks). We construct a one-dimensional measure supported on a curve $\Gamma_0$ whose Cauchy (also known as Stieltjes) transform agrees with the Cauchy transform of the two-dimensional equilibrium 
measure on the exterior of $\Omega$.

\begin{theorem}\label{thmmoth}
Suppose $w> w_{\rm cri}$ (see \eqref{def of w critical point}). There exists a Borel probability measure $\mu_0$, supported on a curve $\Gamma_0$, which lies in the interior of $\Omega$ and intersects the real line between $(-1/w,0)$  with the following properties: 
    \begin{itemize}
        \item[(1)]  There is a rational function $R_0$ (explicitly given in \eqref{defR0}) such that
    \begin{equation}\label{mothdens}
        d\mu_0(s)=\frac{1}{2\pi i}\sqrt{R_0(s)}\, ds,
    \end{equation}
    where $ds$ represents the complex line element of  $\Gamma_0$. The branch cut of the square root of $R_{ 0 }(s)$ is taken in $\Gamma_0$ and the branch is taken such that 
    \begin{equation} \label{R0 sqrt asymp z infty}
       \sqrt{R_0(z)}=\frac{Q_0}{z}+\mathcal{O}(z^{-2}), \quad \text{as } z \to \infty. 
    \end{equation}
    \item [(2)] There is a smooth simple closed curve $\Gamma$, which has $[0,w]$ in its interior and $(-\infty,-1/w]$ in its exterior, with $\Gamma_0\subset \Gamma$ and a real constant $\ell_0$ such that
    \begin{equation}\label{eqcon}
    2U^{\mu_0}(z)+\Re \mathcal{V}(z)+\ell_0\begin{cases}=0 \quad z\in \Gamma_0,\\
    >0  \quad z\in \Gamma\setminus  \Gamma_0,
    \end{cases}
\end{equation}
where 
\begin{equation}\label{defV}
    \mathcal{V}(z)=(1+Q_1)\log z+(1+Q_0) \log \Big(z+\frac{1}{w}\Big)-Q_1\log (z-w) .
\end{equation}
Here, the branch of the logarithm is chosen on $(-\infty,w]$.
\item [(3)] There is the equality of Cauchy transforms
    \begin{equation}\label{moth3}
        \int \frac{d\nu_0(s)}{z-s}=  \int \frac{d\mu_0(s)}{z-s}, \quad z\in \mathbb C\setminus \Omega. \end{equation}
    \end{itemize}

\end{theorem}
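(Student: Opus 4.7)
The plan is to construct $\mu_0$ by analytically continuing the Cauchy transform $C^{\nu_0}(z)=\int d\nu_0(s)/(z-s)$ from the exterior of $\Omega$ back into the droplet and identifying $\Gamma_0$ with the resulting branch cut. Applying $\partial_z$ to the variational condition \eqref{frost1} on $\Omega$ yields
\begin{equation*}
C^{\nu_0}(z)=-\frac{Q_1}{z-w}+(1+Q_0+Q_1)\frac{\bar z}{1+|z|^2},\qquad z\in\Omega,
\end{equation*}
and on $\partial\Omega$ one has $\bar z=S(z)$ with $S(f(u))=f(1/u)$ (valid because $\rho,a,b\in\mathbb R$ in \eqref{confm}). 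This identity meromorphically continues $C^{\nu_0}$ from $\mathbb C\setminus\bar\Omega$ back into $\Omega$, producing a two-sheeted rational function indexed by the two preimages $u_{\pm}(z)$ of the quadratic $azu^2-(z+\rho b)u+\rho=0$.

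Set $Y(z):=2C^{\nu_0}(z)-\mathcal V'(z)$, where $\mathcal V'(z)=\frac{1+Q_1}{z}+\frac{1+Q_0}{z+1/w}-\frac{Q_1}{z-w}$. On the two sheets $u_{\pm}$, $Y$ takes values $Y_{\pm}$, and both $Y_{+}+Y_{-}$ and $Y_{+}Y_{-}$ are rational in $z$. Matching the residues of the two sheets of $C^{\nu_0}$ at the poles $\{0,w,-1/w\}$ of $\mathcal V'$ against those of $\mathcal V'$ itself, together with the asymptotics $C^{\nu_0}(z)\sim 1/z$ and $\mathcal V'(z)\sim(2+Q_0)/z$, forces $Y_{+}+Y_{-}\equiv 0$; hence $R_0(z):=Y(z)^2$ is the rational function of \eqref{defR0}, and its branch points are the zeros of the discriminant $(z+\rho b)^2-4a\rho z$, namely the complex-conjugate pair $z_{\pm}=\rho(2a-b)\pm 2i\rho\sqrt{a(b-a)}$. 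Fixing the branch of $\sqrt{R_0}$ so that $\sqrt{R_0}=-Y$ on the physical sheet gives $\sqrt{R_0(z)}\sim Q_0/z$ at infinity, which is \eqref{R0 sqrt asymp z infty}.

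The curve $\Gamma_0$ is then defined as the trajectory of the quadratic differential $R_0(z)\,dz^2$ joining $z_{+}$ to $z_{-}$ along which $d\mu_0:=\frac{1}{2\pi i}\sqrt{R_0(s)}\,ds$ is a positive measure. A sign analysis of $R_0$ on the real-line components cut out by its poles $\{-1/w,0,w\}$, combined with the symmetry $R_0(\bar z)=\overline{R_0(z)}$, identifies $(-1/w,0)$ as the unique sub-interval on which $R_0<0$ and so forces $\Gamma_0$ to be conjugation-symmetric and to cross $\mathbb R$ inside $(-1/w,0)$. The inclusion $\Gamma_0\subset\mathrm{int}(\Omega)$ follows because the meromorphic continuation of $C^{\nu_0}$ agrees with the true exterior Cauchy transform on $\partial\Omega$, so a trajectory exiting $\Omega$ would force an additional branch point of $\sqrt{R_0}$ on $\partial\Omega$, excluded by the explicit form of $R_0$ in terms of $\rho,a,b$. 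Unit mass of $\mu_0$ follows from $C^{\mu_0}(z)=\tfrac{1}{2}(\mathcal V'(z)-\sqrt{R_0(z)})\sim\tfrac{1}{2}((2+Q_0)-Q_0)/z=1/z$ at infinity.

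Property (3) is then immediate: on $\mathbb C\setminus\bar\Omega$ both $C^{\mu_0}$ and $C^{\nu_0}$ are analytic and coincide with the physical branch of the meromorphic continuation constructed in the first paragraph. For (2), the jump relation $C^{\mu_0}_{+}(z)+C^{\mu_0}_{-}(z)=\mathcal V'(z)$ on $\Gamma_0$ integrates, via the $g$-function $g(z):=\int\log(z-s)\,d\mu_0(s)$ with $g'=C^{\mu_0}$ and $\Re g=-U^{\mu_0}$, to the equality $2U^{\mu_0}(z)+\Re\mathcal V(z)+\ell_0=0$ on $\Gamma_0$ for a real constant $\ell_0$ fixed by the integration. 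To promote this to a strict inequality on $\Gamma\setminus\Gamma_0$, a Cauchy-Riemann calculation identifies the normal derivative of $2U^{\mu_0}+\Re\mathcal V$ across $\Gamma_0$ with $\pm\Im\sqrt{R_0(z)}$, which is strictly nonzero away from the endpoints $z_{\pm}$; closing $\Gamma_0$ into a loop $\Gamma$ on the side where this derivative is positive, encircling the pole at $0$ and avoiding that at $-1/w$, produces the required positivity. The main obstacle is the second and third paragraph: the rigorous algebraic identification of $R_0$ with the prescribed zero/branch-point structure, together with the global topological claim that the corresponding quadratic-differential trajectory joining the two conjugate branch points is a simple arc contained in $\mathrm{int}(\Omega)$ and crossing $(-1/w,0)$. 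This depends sensitively on the position of the poles $\{0,w,-1/w\}$ relative to $\Omega$ via the conformal-map parameters $\rho,a,b$.
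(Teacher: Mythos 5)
Your overall strategy coincides with the paper's: continue the (spherical) Schwarz function through $\partial\Omega$, identify the rational discriminant $R_0$ from the symmetric functions of the two branches, define $\mu_0$ via a critical trajectory of $R_0(z)\,dz^2<0$, and integrate the jump relation $C^{\mu_0}_++C^{\mu_0}_-=\mathcal{V}'$ to get the variational equality. Your algebraic identifications (the relation $Y_++Y_-\equiv 0$, the branch points as the zeros of $(z+\rho b)^2-4a\rho z$, the normalization $\sqrt{R_0}\sim Q_0/z$, and the unit mass of $\mu_0$) all check out and match \eqref{c2}, \eqref{defR}, \eqref{R0 sqrt asymp z infty} and \eqref{tmass2}. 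However, there are genuine gaps in exactly the places where the paper does its real work.

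First, your sign analysis is wrong: since $z_2=\overline{z_1}$ and $c_0$ is a double zero, $R_0(x)=\mathrm{const}\cdot|x-z_1|^2(x-c_0)^2/\big(x^2(x-w)^2(x+1/w)^2\big)$ is strictly \emph{positive} on all of $\mathbb{R}$ away from its poles, so there is no sub-interval on which $R_0<0$. A trajectory of $R_0\,dz^2<0$ crosses the real axis vertically at any point where $R_0>0$, so this criterion cannot locate the crossing in $(-1/w,0)$. The paper instead establishes the global topology of the critical trajectories: using Teichm\"uller's lemma and the key Proposition~\ref{traj1} (the identity $\mathcal{U}_0(c_0)=\mathcal{U}(c_0)>0$, whose positivity rests on the strict variational inequality of Lemma~\ref{strictineq}, proved via Lemma~\ref{crit1} and a Mountain Pass argument), it rules out the configuration in which a trajectory from $z_1$ ends at $c_0$ or in which the loop encloses $w$ and $c_0$ together with $0$; only then does Corollary~\ref{crittraj} yield the arc $\Gamma_0$ from $z_1$ to $z_2$ separating $0$ from $-1/w$. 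You flag this as ``the main obstacle'' but supply no argument, and your proposed substitute for the crossing location is false. Likewise your justification of $\Gamma_0\subset\mathrm{int}(\Omega)$ (``an exiting trajectory would force an additional branch point on $\partial\Omega$'') is not a valid implication; the paper invokes the argument of \cite[Lemma~2.8]{BBLM15}.

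Second, for item (2) the local normal-derivative computation ($\partial_{n_\pm}$ of $2U^{\mu_0}+\Re\mathcal{V}$ proportional to $\Im\sqrt{R_0}$) only gives positivity in a thin neighbourhood of $\Gamma_0$, and in any case one cannot close the open arc $\Gamma_0$ into a loop enclosing $0$ and excluding $-1/w$ by staying in such a neighbourhood. The paper completes $\Gamma_0$ with the two steepest ascent paths $\Gamma_1,\Gamma_2$ running from $z_1,z_2$ to the node $c_0$ (Lemma~\ref{orthtraj}), along which $\mathcal{U}_0>0$ globally --- and the existence and positivity of these paths again relies on Proposition~\ref{traj1}. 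Without that ingredient your strict inequality on $\Gamma\setminus\Gamma_0$ is not established.
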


To obtain the large $N$ asymptotics of the orthogonal polynomials $P_{n,N}$, we rely on the fact that the planar orthogonality can be reformulated as non-Hermitian contour orthogonality,
a fact which is of independent interest.

\begin{proposition}
    \label{contorth}
Let $P_{n,N}(z)$ satisfy 
  \begin{equation}\label{eq9}
    \int_{\mathbb C}P_{n,N}(z)\overline{(z-w)}^{k} 
    \frac{|z-w|^{2 N Q_1} }{(1+|z|^2)^{N(1+Q_0+Q_1)+1}}\, dA(z)=0, \quad k=0,\dots n-1.
\end{equation}
Then for a simple closed  contour $\Gamma$ with positive orientation which has $[0,w]$ in its interior and $(-\infty,-1/w]$ in the exterior we have
\begin{equation}\label{nonh}
    \oint_{\Gamma}P_{n,N}(z)z^{j}\frac{(z-w)^{NQ_1}}{(1+zw)^{N+NQ_0}}\frac{dz}{z^{n+NQ_1}}=0,\quad j=1,\dots n-1.
\end{equation}
\end{proposition}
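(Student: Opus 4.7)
Set $h(z):=P_{n,N}(z)(z-w)^{NQ_1}$, a polynomial of degree $n+NQ_1$, and write $L:=N+NQ_0$ and $M:=N(1+Q_0+Q_1)+1$. Expanding $|z-w|^{2NQ_1}=(z-w)^{NQ_1}(\overline{z-w})^{NQ_1}$, the planar orthogonality reads
\[
I_k:=\int_{\mathbb{C}} h(z)\,\overline{(z-w)^{NQ_1+k}}\,(1+|z|^2)^{-M}\,dA(z)=0,\qquad k=0,\dots,n-1.
\]
The plan is to turn each $I_k$ into a contour integral around $z=0$ and then extract (\ref{nonh}) by a short induction.

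After the angular integration (using $\int_0^{2\pi}e^{i(\ell-m)\theta}d\theta=2\pi\delta_{\ell,m}$) and the Beta identity $\int_0^\infty s^\ell(1+s)^{-M}\,ds=\ell!(M-\ell-2)!/(M-1)!$, one obtains $I_k=\pi\int_0^\infty(1+s)^{-M}\Sigma_k(s)\,ds$ where $\Sigma_k(s):=\sum_\ell h_\ell\,[z^\ell](z-w)^{NQ_1+k}\,s^\ell$. By the standard Hadamard-product contour formula $\Sigma_k(s)=\frac{1}{2\pi i}\oint_\Gamma h(z)(s-wz)^{NQ_1+k}z^{-NQ_1-k-1}\,dz$ for any $\Gamma$ as in the statement, so Fubini (justified by polynomial bounds on $\Sigma_k$ and the decay of $(1+s)^{-M}$) gives
\[
I_k=\frac{1}{2i}\oint_\Gamma \frac{h(z)\,F_k(z)}{z^{NQ_1+k+1}}\,dz,\qquad F_k(z):=\int_0^\infty\frac{(s-wz)^{NQ_1+k}}{(1+s)^M}\,ds.
\]
The heart of the argument is the evaluation of $F_k$: the substitution $u=(1+s)/(1+wz)$ followed by $v=1/u$ gives $F_k(z)=(1+wz)^{NQ_1+k+1-M}\int_0^{1+wz}(1-v)^{NQ_1+k}v^{M-NQ_1-k-2}\,dv$. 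Splitting $\int_0^{1+wz}=\int_0^1+\int_1^{1+wz}$ and applying the shift $v=1+y$ in the second piece writes $F_k$ as a constant multiple of $(1+wz)^{-(L-k)}$ plus a remainder $(-1)^{NQ_1+k}(1+wz)^{-(L-k)}\int_0^{wz}y^{NQ_1+k}(1+y)^{L-k-1}\,dy$. Since $L-k-1\ge 0$ the latter integral is a polynomial in $wz$ whose lowest power is $(wz)^{NQ_1+k+1}$, so after dividing by $z^{NQ_1+k+1}$ the remainder contributes to the outer integrand only a polynomial in $z$ times $(1+wz)^{-(L-k)}$, whose unique pole at $z=-1/w$ lies outside $\Gamma$ and so integrates to $0$ by Cauchy's theorem.

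What survives is the relation $\oint_\Gamma h(z)\,z^{-(NQ_1+k+1)}(1+wz)^{-(L-k)}\,dz=0$ for each $k=0,\dots,n-1$. Using $(1+wz)^{-(L-k)}=(1+wz)^{-L}(1+wz)^k$ and defining $J(a):=\oint_\Gamma h(z)\,z^{-(a+1)}(1+wz)^{-L}\,dz$, this becomes the triangular linear system $\sum_{m=0}^k\binom{k}{m}w^{k-m}J(NQ_1+m)=0$, $k=0,\dots,n-1$; induction on $k$ starting from $J(NQ_1)=0$ forces $J(NQ_1+m)=0$ for all $m=0,\dots,n-1$, which encompasses (\ref{nonh}) for $j=1,\dots,n-1$. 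The one nontrivial point is the ``remainder vanishes'' observation above---namely that the Taylor order of $\int_0^{wz}(\cdots)$ at $z=0$ exactly cancels the pole $z^{-(NQ_1+k+1)}$---and once this is spotted the rest is bookkeeping.
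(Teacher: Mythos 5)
Your proof is correct, but it reaches the key intermediate identity (the paper's Lemma~\ref{contorth0}) by a genuinely different mechanism. The paper constructs an explicit $\overline{\partial}$-primitive $\chi(z)=(z-w)^{NQ_1}\int_w^{\overline z}(s-w)^{NQ_1+k}(1+zs)^{-M}\,ds$, splits it into a Beta-integral main term $\mathcal A_1$ and a decaying tail $\mathcal B_1$, and applies Stokes' theorem on $\mathbb C$ minus small neighbourhoods of $-1/w$ and a branch cut, finally deforming to $\Gamma$. You instead diagonalise the planar pairing by the angular integration, represent the resulting diagonal sum $\Sigma_k(s)$ as a Hadamard-product contour integral, and push the radial integral inside via Fubini; your evaluation of $F_k(z)$ then reproduces exactly the paper's Beta constant $G_{k,N}$ (your $\int_0^1$ piece) plus a remainder whose only pole sits at $-1/w$ outside $\Gamma$ — the analogue of the paper's discarded $\mathcal B_1$. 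Your route avoids Stokes' theorem and the tubular-neighbourhood bookkeeping entirely, at the cost of the slightly delicate observation that the Taylor order of $\int_0^{wz}y^{NQ_1+k}(1+y)^{L-k-1}\,dy$ cancels the pole $z^{-(NQ_1+k+1)}$ (and, implicitly, that the chain of real substitutions for $F_k$ extends to complex $z$ by rationality of both sides — worth a sentence). For the passage from the $k$-dependent weight to \eqref{nonh}, the paper changes variables $z\mapsto 1/z$ twice and invokes that $\{(z+w)^k\}$ is a polynomial basis, whereas you expand $(1+wz)^k$ binomially and solve a triangular system by induction; these are equivalent in content, and your version in fact yields the slightly stronger range $j=0,\dots,n-1$.
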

\begin{remark}
  One observes that as $z\rightarrow\infty$, the integrand in \eqref{eq9} is $\mathcal{O}(z^{2(n-N-NQ_0)-3})$.  
  Then for the integral to converge we want this to be $\mathcal{O}(z^{-3})$. This is the case if we have $n\leq N+ NQ_0$. Hence the orthogonal polynomials of all degrees less than $n$ exists for all $n$ such that 
    \begin{equation}\label{r0cri}
    n\leq N+ NQ_0.
    \end{equation}
  In what follows we will fix $n-N=r_0$ and will let $n,N\rightarrow\infty$.  Since $Q_0>0$, due to \eqref{r0cri} we can then fix $r_0$ to be any real number.
\end{remark} 
For the orthogonal polynomials associated with the external potential \eqref{F10a}, the equivalent contour orthogonality was established in \cite[\S 3]{BBLM15} and further extended in \cite{LY19,BKP23} to the case of multiple point charges. Such contour orthogonality serves as a cornerstone for several works \cite{BGM17,BEG18,BKP23, BSY24,BY23, KLY23,LY17,LY23,WW19} in various asymptotic analyses of orthogonal polynomials, as well as in the study of statistical properties of the associated Coulomb gases.
Since \eqref{F10a} can be obtained as a particular limit of \eqref{F10} (see \cite{BFL25} for a related discussion), Proposition~\ref{contorth} can be regarded as an extension of the result in \cite[\S 3]{BBLM15}.

\begin{remark}
    From \eqref{nonh}, the weight function for the non-Hermitian contour orthogonality is given as 
    \begin{equation}\label{wC}
    w_{n,N}(z)=\frac{(z-w)^{NQ_1}}{(z+1/w)^{N+NQ_0}}\frac{1}{z^{n+NQ_1}}.
   \end{equation} 
Then \eqref{wC} is analytic in $\mathbb C \setminus ((-\infty,-1/w]\cup [0,w])$ also if $N,NQ_0,NQ_1$  are not integers. This matches with $e^{-N \mathcal{V}(z)}$ (recall \eqref{defV} for definition of $\mathcal{V}$) up to a factor $z^{n-N}$.  
This will play a crucial role in the steepest descent analysis that follows.
\end{remark}

By \cite{FIK92}, Proposition~\ref{contorth} further allows us to write $P_{n,N}$ as a solution to a $2\times 2$ Riemann-Hilbert problem which is described at the beginning of Section 5, referred to as RH problem 5.1.
To obtain the strong asymptotics of $P_{n,N}$
we proceed in Section 5 to perform the Deift-Zhou steepest descent analysis \cite{DZ93} of the
Riemann-Hilbert problem, which at a technical level has steps in common with the works \cite{BBLM15, BGM17,BK12, DKMVZ99, LY19, LY23, KKL24, LY17, BEG18, BY23, BSY24}.
To state our main result in this regard, we recall the measure $\mu_0$ in Theorem \ref{thmmoth} and its associated $g$-function, 
\begin{equation}
    g(z)=\int \log(z-s) \, d\mu_0(s).
\end{equation}
Further, we denote the conformal map $F_1$ which maps the exterior of the droplet $\Omega$ to the unit disc $\mathbb D(0,1)$. It is the inverse of the analytic function $f(u)$ given in \eqref{confm}, which behaves as $F_1(z)=\rho z^{-1}+\mathcal{O}(z^{-2})$ as $z\rightarrow\infty$.

\begin{theorem}\label{thm1}
    Let $w$ satisfy \eqref{def of w critical point} and let $n,N\rightarrow \infty$ be such that $n-N=r_0$ is any fixed real number. Then we have 
\begin{equation}\label{strongasy}
    \begin{aligned}
     P_{n,N}(z)=i \Big(\frac{\rho}{F_1(z)}\Big)^{r_0}\frac{\sqrt{\rho F_1'(z)}}{F_1(z)}e^{Ng(z)}(1+\mathcal{O}(N^{-1})) \quad \text{as}\hspace{0.1cm} N \rightarrow\infty
    \end{aligned}
\end{equation}
uniformly for $z$ in compact subsets of $\overline{\mathbb C}\setminus\Gamma_0 $. Here the branch of the square root in $\sqrt{F_1'(z)}$ is taken such that $i  \sqrt{\rho F_1'(z)}/F_1(z)$ tends to $1$ as $z\rightarrow\infty$.
\end{theorem}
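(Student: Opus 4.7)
The plan is to apply the Deift--Zhou nonlinear steepest descent method to the $2\times 2$ Riemann--Hilbert problem whose $(1,1)$ entry is $P_{n,N}(z)$. This problem (RH problem 5.1 in the paper) is obtained from Proposition~\ref{contorth} via the standard Fokas--Its--Kitaev construction: on $\Gamma$ the jump is $Y_{+}=Y_{-}\bigl(\begin{smallmatrix} 1 & w_{n,N}\\ 0 & 1\end{smallmatrix}\bigr)$ with $w_{n,N}$ as in \eqref{wC}, and $Y(z)\,z^{-n\sigma_{3}}\to I$ as $z\to\infty$. Since, as noted after \eqref{wC}, $w_{n,N}(z)=z^{n-N}\,e^{-N\mathcal{V}(z)}$, the whole analysis is driven by the external field $\Re \mathcal{V}$ and pairs naturally with the mother-body measure $\mu_{0}$ of Theorem~\ref{thmmoth} through the variational identity \eqref{eqcon}.

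The first transformation is the $g$-function substitution built from $g(z)=\int\log(z-s)\,d\mu_{0}(s)$ and the constant $\ell_{0}$ of Theorem~\ref{thmmoth}. The equality in \eqref{eqcon} converts the jump on $\Gamma_{0}$ into an oscillatory matrix of unit determinant, while the strict inequality on $\Gamma\setminus\Gamma_{0}$ renders the jump there exponentially close to the identity in $N$. Because $n-N=r_{0}<0$ is fixed and $g(z)=\log z+\mathcal{O}(z^{-1})$ at infinity, the normalised matrix $T(z)$ behaves like $z^{r_{0}\sigma_{3}}\bigl(I+\mathcal{O}(z^{-1})\bigr)$ at infinity; this nonstandard normalisation is what will ultimately produce the prefactor $(\rho/F_{1}(z))^{r_{0}}$ in \eqref{strongasy}.

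Next I would open lenses around $\Gamma_{0}$, factorising the oscillatory jump as $L_{+}\cdot\bigl(\begin{smallmatrix} 0 & 1\\ -1 & 0\end{smallmatrix}\bigr)\cdot L_{-}$, with triangular factors whose off-diagonal entries decay exponentially away from $\Gamma_{0}$ thanks to the strict inequality in \eqref{eqcon}. The global parametrix for the resulting matrix $S(z)$ has the constant anti-diagonal jump on $\Gamma_{0}$ and matches $z^{r_{0}\sigma_{3}}$ at infinity; it can be written in closed form using the conformal map $F_{1}\colon\overline{\mathbb{C}}\setminus\Omega\to\mathbb{D}(0,1)$ inverse to \eqref{confm}. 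Extracting the $(1,1)$ entry yields exactly $i\,(\rho/F_{1}(z))^{r_{0}}\sqrt{\rho F_{1}'(z)}/F_{1}(z)$, with the overall factor $i$ fixed by matching the endpoint behaviour. Near the two endpoints of $\Gamma_{0}$, where by part~(1) of Theorem~\ref{thmmoth} the density $\sqrt{R_{0}}$ vanishes like a square root in the local coordinate, I would construct the standard Airy parametrices on shrinking disks, matching the global model on the boundary to order $N^{-1}$.

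Finally, comparing $S$ with the combined global-plus-local parametrix gives a small-norm RH problem for the error matrix $R(z)=I+\mathcal{O}(N^{-1})$ uniformly on compact subsets of $\overline{\mathbb{C}}\setminus\Gamma_{0}$, and unravelling the sequence $Y\mapsto T\mapsto S\mapsto R$ delivers \eqref{strongasy}. The main obstacle is twofold. First, the bookkeeping of the fixed shift $r_{0}\neq 0$ in the infinity normalisation of $T$, which affects both the global parametrix and the constant $\ell_{0}$. Second, and more substantively, the verification that the strict inequality in \eqref{eqcon} extends to a two-dimensional neighbourhood of $\Gamma_{0}$ (including up to its endpoints), so that the off-diagonal lens factors are genuinely exponentially small in $N$; this is what powers both the Airy matching and the small-norm estimate for $R$. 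Keeping the branch cuts of $\mathcal{V}$ in \eqref{defV} and of $\sqrt{R_{0}}$ in \eqref{mothdens} consistent throughout the transformations also requires careful attention.
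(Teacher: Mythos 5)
Your proposal follows essentially the same route as the paper: the Fokas--Its--Kitaev RH problem from the contour orthogonality, the $g$-function transformation built from $\mu_0$ and $\ell_0$ via \eqref{eqcon}, lens opening around $\Gamma_0$, a global parametrix expressed through the conformal map $F_1$ (with the $r_0$-shift handled by a Szeg\H{o}-type factor), Airy local parametrices at $z_1,z_2$, and a small-norm argument yielding the $1+\mathcal{O}(N^{-1})$ error. The only cosmetic difference is that you place the constant antidiagonal jump directly on $\Gamma_0$, whereas the paper keeps the $z^{\pm r_0}$ entries there and strips them off afterwards with the scalar function $D$ satisfying $D_+D_-=z$; this is exactly the bookkeeping issue you flag, and it is resolved as you anticipate.
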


\begin{remark} $ $

\begin{enumerate}
\item In \eqref{strongasy}, the pre-factor in front of $e^{N g(z)}$ 
appears as the $(1,1)$ entry of the  RH problem \ref{RHM}.

\item The map $F_{1}(z)$ originally defined in $\mathbb C \setminus \Omega$ has an analytic  continuation to the domain $\mathbb C \setminus \Gamma_0$. Moreover, we will also see that $F_{1}(z)\neq 0$ and analytic in $\mathbb C \setminus \Gamma_0$. This analytic continuation is used in \eqref{strongasy}.

\item The map 
    \begin{equation}
        F_{0}(z)=\frac{1}{F_1(z)}:\mathbb C\setminus \Omega\rightarrow \mathbb C\setminus \mathbb D
    \end{equation}
    is the conformal map from the exterior of the droplet to the exterior of the unit disc. Then \eqref{strongasy} can be rewritten as \begin{equation}\label{exp2}
    P_{n,N}(z)=\left(\rho F_{0}(z)\right)^{r_0}e^{Ng(z)} \sqrt{\rho F_{0}'(z)}(1+\mathcal{O}(N^{-1})) .
\end{equation}
The expansion \eqref{exp2} is similar to the strong asymptotics of the orthogonal polynomials obtained in \cite{BBLM15}, and consistent with the known expansion of \cite{HW21} in the exterior and the boundary of the droplet. 
\end{enumerate}

\end{remark}

As a consequence of Theorem \ref{thm1}, we obtain the limiting zero counting
measure of the polynomials $P_{n,N}$.

\begin{theorem}\label{thm2}
    Under the assumptions of Theorem \ref{thm1} all zeros of $P_{n,N}$ tend to $\Gamma_0$ as $n\rightarrow\infty$. In addition, $\mu_0$ is the weak limit of the normalized zero counting measure of $P_{n,N}$.
\end{theorem}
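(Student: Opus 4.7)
The plan is to bootstrap Theorem \ref{thm1} via standard logarithmic potential theory.

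First, I would establish that all zeros must accumulate on $\Gamma_0$. Fix any compact $K \subset \overline{\mathbb{C}}\setminus\Gamma_0$. By the remarks following Theorem \ref{thm1}, $F_1$ admits an analytic, finite, non-vanishing extension to $\mathbb{C}\setminus\Gamma_0$, so the prefactor
\begin{equation*}
 c(z) := i\Big(\frac{\rho}{F_1(z)}\Big)^{r_0}\frac{\sqrt{\rho F_1'(z)}}{F_1(z)}
\end{equation*}
is continuous and non-vanishing on $K$, while $e^{Ng(z)}$ never vanishes. The uniformity of the error term in \eqref{strongasy} will then force $P_{n,N}(z)\ne 0$ on $K$ for all $N$ large enough. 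Applying this in particular to annular regions $\{R\le|z|\le 2R\}$ disjoint from $\Gamma_0$ with $R$ arbitrarily large also prevents the zeros of the monic polynomial $P_{n,N}$ from escaping to $\infty$, so they all lie in a fixed bounded neighborhood of $\Gamma_0$ and accumulate on $\Gamma_0$.

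Next I would translate the strong asymptotics into convergence of logarithmic potentials. Let $\mu_n$ denote the normalized zero counting measure of $P_{n,N}$. Monicity gives
\begin{equation*}
 -U^{\mu_n}(z) = \frac{1}{n}\log|P_{n,N}(z)|,
\end{equation*}
and taking moduli in \eqref{strongasy}, dividing by $N$, and using that $\log|c(z)|$ and $\log|1+\mathcal{O}(N^{-1})|$ are both $O(1)$ uniformly on compact subsets of $\overline{\mathbb{C}}\setminus\Gamma_0$, one obtains
\begin{equation*}
 \frac{1}{N}\log|P_{n,N}(z)| = \Re g(z) + O(N^{-1}).
\end{equation*}
With $n = N + r_0$ and $r_0$ fixed, $N/n \to 1$, so $U^{\mu_n}(z)\to -\Re g(z) = U^{\mu_0}(z)$ locally uniformly on $\overline{\mathbb{C}}\setminus\Gamma_0$.

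Finally, I would conclude by tightness and unicity. The first step shows the supports of the $\mu_n$ eventually sit in a common compact neighborhood of $\Gamma_0$, so $\{\mu_n\}$ is tight; by Prokhorov's theorem every subsequence has a further subsequence converging weakly to some probability measure $\mu$, necessarily supported in $\Gamma_0$. For any fixed $z \in \mathbb{C}\setminus\Gamma_0$, the kernel $s\mapsto \log|z-s|$ is continuous on this common compact support, hence $U^{\mu_{n_k}}(z)\to U^{\mu}(z)$ along the subsequence, and combined with the pointwise convergence $U^{\mu_n}\to U^{\mu_0}$ from the previous step this yields $U^{\mu}(z) = U^{\mu_0}(z)$ throughout $\mathbb{C}\setminus\Gamma_0$. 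The unicity theorem for logarithmic potentials of compactly supported measures (see, e.g., \cite{ST97}) then forces $\mu = \mu_0$, and since every subsequential limit equals $\mu_0$, the full sequence converges weakly to $\mu_0$. The only delicate point to check is that the $\mathcal{O}(N^{-1})$ error in \eqref{strongasy} is uniform near $\infty$ so that zeros cannot escape, but this is already part of Theorem \ref{thm1}; beyond that, the argument is entirely routine.
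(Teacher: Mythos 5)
Your proposal is correct and follows essentially the same route as the paper: nonvanishing of the prefactor in \eqref{strongasy} on $\overline{\mathbb C}\setminus\Gamma_0$ forces the zeros onto $\Gamma_0$, and the locally uniform convergence $\tfrac1n\log|P_{n,N}(z)|\to\Re g(z)=-U^{\mu_0}(z)$ identifies the weak limit of the zero counting measures. The only difference is that you unpack the final potential-theoretic step (tightness, Prokhorov, and the unicity theorem for logarithmic potentials), which the paper delegates to a citation of \cite{ST97}.
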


We also obtain the large $N$ expansion of the normalization constants $h_{n,N}$ in \eqref{F11}. Recall $\rho$ in \eqref{confm} and $\ell_{\mathrm{2D}}$ in \eqref{frost1}.

\begin{theorem}\label{norma}
    Under the same assumption of Theorem \ref{thm1}, we have 
  \begin{equation}\label{norma2}
   h_{n,N}=\pi\sqrt{\frac{2\pi}{N(1+Q_0+Q_1)}} 
  \rho^{2r_0+1} e^{N\ell_{\mathrm{2D}}}\left(1+\mathcal{O}(N^{-1})\right).
\end{equation}  
\end{theorem}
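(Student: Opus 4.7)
The plan is to express $h_{n,N}$ as the planar integral $\int_{\mathbb{C}} |P_{n,N}(z)|^2 e^{-NV(z)}\, dA(z)$ and apply Laplace's method after substituting the strong asymptotics of Theorem~\ref{thm1}. The central observation is that the effective integrand concentrates in an $N^{-1/2}$-neighbourhood of the droplet boundary $\partial\Omega$ (\emph{not} on $\Gamma_0$), which is what produces the Gaussian factor $N^{-1/2}$ in \eqref{norma2}.

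Squaring \eqref{strongasy} and combining with the variational identity \eqref{frost1}, rewritten as $e^{-NV(z)} = (1+|z|^2)^{-1}\,e^{2NU^{\nu_0}(z)+N\ell_{\mathrm{2D}}}\,e^{-N\mathcal{U}_{2D}(z)}$ (valid on all of $\mathbb{C}$ with $\mathcal{U}_{2D}\equiv 0$ on $\Omega$), the integrand becomes
\[
\rho^{2r_0+1}\,e^{N\ell_{\mathrm{2D}}}\,\frac{|F_1'(z)|}{|F_1(z)|^{2r_0+2}(1+|z|^2)}\,\exp\!\big[2N(U^{\nu_0}(z)-U^{\mu_0}(z)) - N\mathcal{U}_{2D}(z)\big]
\]
up to a $(1+\mathcal{O}(N^{-1}))$ factor. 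The exponent is nonpositive and vanishes precisely on $\partial\Omega \cup \Gamma_0$, so the integrand localises on this set. I would then argue that the contribution of a neighbourhood of $\Gamma_0$ is subleading: by the Plemelj jump formula, the $1$-d potential $U^{\mu_0}$ has a normal-derivative jump across $\Gamma_0$, so $U^{\nu_0}-U^{\mu_0}$ has a V-shape along normals to $\Gamma_0$ and the corresponding Laplace integral yields only $O(N^{-1})$, compared with the $O(N^{-1/2})$ contribution from $\partial\Omega$ computed next.

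Near $\partial\Omega$, Theorem~\ref{thmmoth}(3) together with $\mathrm{supp}(\mu_0)=\Gamma_0 \subset \mathrm{int}(\Omega)$ imply that $U^{\nu_0}-U^{\mu_0}$ admits a $C^1$ extension by $0$ across $\partial\Omega$, hence vanishes to second order there, and likewise $\mathcal{U}_{2D}$ vanishes to second order from the exterior. A direct computation using $\Delta \log(1+|z|^2) = 4/(1+|z|^2)^2$ together with the explicit density of $\nu_0$ shows that, in the normal coordinate $\tau$ at $z_0 \in \partial\Omega$, both expansions match seamlessly to give the common quadratic form $-2N(Q_0+Q_1+1)\tau^2/(1+|z_0|^2)^2$. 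Gaussian integration in $\tau$ then equals $(1+|z_0|^2)\sqrt{\pi/[2N(Q_0+Q_1+1)]}$, precisely cancelling the $(1+|z|^2)^{-1}$ in the prefactor. The remaining tangential integral reduces to $\int_{\partial\Omega}|F_1'(z)|\,|dz|=\int_{\partial\mathbb{D}}|d\eta|=2\pi$, since $F_1$ restricts to a conformal bijection $\partial\Omega\to\partial\mathbb{D}$ with $|F_1|\equiv 1$ on $\partial\Omega$.

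Collecting all factors,
\[
h_{n,N} \;\sim\; \rho^{2r_0+1}\,e^{N\ell_{\mathrm{2D}}}\cdot 2\pi \cdot \sqrt{\frac{\pi}{2N(Q_0+Q_1+1)}} \;=\; \pi \sqrt{\frac{2\pi}{N(Q_0+Q_1+1)}}\,\rho^{2r_0+1}\,e^{N\ell_{\mathrm{2D}}},
\]
which is exactly \eqref{norma2}. The hard part will be to propagate the relative error $(1+\mathcal{O}(N^{-1}))$ of Theorem~\ref{thm1} as a \emph{multiplicative} $(1+\mathcal{O}(N^{-1}))$ correction through the Laplace integration: this requires uniform integrable control of subleading terms of the asymptotic expansion on a neighbourhood of $\partial\Omega$ (which, crucially, lies at positive distance from $\Gamma_0$ so that Theorem~\ref{thm1}'s uniformity applies directly), and a careful quantitative verification that the contribution from a shrinking neighbourhood of $\Gamma_0$ is genuinely $O(N^{-1})$ times the leading term.
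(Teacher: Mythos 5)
Your route is genuinely different from the paper's. The paper never touches the planar integral directly: it first proves the determinantal identity $\tilde{h}_{n,N}=-G_{n,N}^{-1}P_{n+1,N}(0)\,h_{n,N}$ relating the planar norm to the contour norm of Proposition~\ref{contorth}, then reads $\tilde h_{n,N}$ off the $(1,2)$-entry of the RH solution, evaluates $P_{n+1,N}(0)$ from Theorem~\ref{thm1}, applies Stirling to $G_{k,N}$, and finally converts $\ell_0$ into $\ell_{\mathrm{2D}}$ via the ``miraculous'' Lemma~\ref{lemmrela}. Your two-dimensional Laplace analysis, by contrast, explains transparently where the Gaussian factor $\sqrt{2\pi/(N(Q_0+Q_1+1))}$ comes from (a boundary layer of width $N^{-1/2}$ along $\partial\Omega$), and your local computation is correct: the matching of the interior quadratic form $2N(U^{\nu_0}-U^{\mu_0})\approx -2N(Q_0+Q_1+1)\tau^2/(1+|z_0|^2)^2$ with the exterior form $-N\mathcal{U}_{2D}$, the cancellation of $(1+|z_0|^2)$, and the identity $\int_{\partial\Omega}|F_1'|\,|dz|=2\pi$ all check out and reproduce \eqref{norma2} exactly.

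There is, however, one genuine gap beyond the technical points you flag yourself. Your localisation requires the global inequality $U^{\mu_0}(z)\ge U^{\nu_0}(z)$ throughout $\Omega$ (so that the exponent $2N(U^{\nu_0}-U^{\mu_0})-N\mathcal{U}_{2D}$ is nonpositive everywhere and attains its maximum only on $\partial\Omega\cup\Gamma_0$). This is the standard ``mother body inequality'', but Theorem~\ref{thmmoth} does not assert it: item (3) gives equality of Cauchy transforms only on $\mathbb{C}\setminus\Omega$, and item (2) is a variational condition on the one-dimensional contour $\Gamma$, not a comparison of the two potentials inside the droplet. Simple maximum-principle arguments do not settle the sign: $U^{\mu_0}-U^{\nu_0}$ is \emph{sub}harmonic on $\Omega\setminus\Gamma_0$ with zero boundary values on $\partial\Omega$, which bounds it from above, not below. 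If this inequality failed anywhere in $\Omega$ the integrand would be exponentially large there and the whole computation would collapse, so it must be proved, and it is not available off the shelf in the paper. Secondly, the estimate near $\Gamma_0$ cannot be obtained from Theorem~\ref{thm1} at all (the asymptotics there is uniform only on compacts of $\overline{\mathbb{C}}\setminus\Gamma_0$, and the zeros of $P_{n,N}$ accumulate on $\Gamma_0$); you would need to return to the RH solution $Y=e^{\cdot}S\,(M\text{ or }P)\,e^{\cdot}$ to get a uniform upper bound $|P_{n,N}|\le Ce^{N\Re g}(\cdots)$ across $\Gamma_0$ and near the Airy points $z_1,z_2$ before the ``kink gives only $O(N^{-1})$'' argument can be run. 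With these two points supplied, your proof would be a legitimate and arguably more illuminating alternative to the paper's, since it bypasses Lemma~\ref{lemmrela} entirely.
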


\begin{remark} ${}$ 
\begin{enumerate}
  \item We find the structure exhibited in Theorem \ref{norma} remarkable. The RH problem \ref{rhproblemY} gives us the asymptotics of the normalization constants of $P_{n,N}$ with respect to the weight $w_{n,N}$, and this involves the Robin constant $\ell_0$ in \eqref{eqcon}. However, we can express $\ell_0$ in terms of $\ell_{\mathrm{2D}}$ in a curious way, as is done in Lemma \ref{lemmrela} (a similar miraculous relation also appears in \cite[Lemma~7.2]{BBLM15}). As we shall see, this drastically simplifies $h_{n,N}$ and gives us \eqref{norma2}. 
   \item If (\ref{norma2}) should be uniformly valid in $r_0$ for large $N$, substitution in
    (\ref{F12})  predicts the leading large $N$ asymptotic form
     \begin{equation}\label{n3}
     \log Q_N \sim N^2 ( \ell_{\rm 2 D} - \log \rho ).
   \end{equation}
   Here $Q_N$ is the configuration integral associated with (\ref{F9}), as given in the LHS of (\ref{F12}).
   Using the identification of $\rho$ in (\ref{confm}) as the logarithmic capacity of the
   droplet (see e.g.~\cite[text below (1.24)]{BBLM15}),
   $$
   \log \rho = \int_{\Omega} d \mu(z) \int_{\Omega} d \mu(w) \, \log | w - z|,
   $$
   we see that (\ref{n3}) is consistent with the electrostatic argument of \cite[\S 2.4]{BFL25}.
   \end{enumerate}
\end{remark}

\subsection{Overview of the rest of the paper}
In Section~\ref{schwarzsection}, we construct a degree~$3$ meromorphic function on a genus-zero Riemann surface~$\mathcal{R}$, which we refer to as the \emph{spherical Schwarz function}. In Section~\ref{mbsection}, this function is used to construct the mother body, leading to the proof of Theorem~\ref{thmmoth}. Next, in Section~\ref{eqsection}, we establish the equivalence between planar orthogonality and non-Hermitian contour orthogonality, thereby proving Proposition~\ref{contorth}. Building on this equivalence, Section~\ref{RHsection} presents a Deift--Zhou steepest descent analysis, through which Theorem~\ref{thm1} is obtained. Finally, in Section~\ref{orthsection}, we conclude with the proof of Theorem~\ref{thm2}.

\section{Construction of a meromorphic function on a Riemann surface}\label{schwarzsection}

The map $f$ in \eqref{confm} is a rational map of degree $2$ with poles at $1/a$ and $0$ and zeros at $1/b$ and $\infty$. In addition, the equation $f'(u)=0$ admits two complex conjugate solutions, which are denoted by $u_1,u_2$.

In terms of the parameters $a,b$ of $f$, they are given as 
\begin{equation}
    u_1=\frac{a}{b}-\frac{i}{b}\sqrt{\frac{b}{a}-1},\qquad   u_2=\frac{a}{b}+\frac{i}{b}\sqrt{\frac{b}{a}-1}.
\end{equation}
Due to \eqref{parameters1} we have thus verified that $u_1$ and $u_2$ are complex valued.
The corresponding critical values \( z_j=f(u_j) \) are also complex conjugates.  Explicitly they are given as
\begin{equation}
    z_1=\rho\Big( 2 a -b +2i  \sqrt{a (b-a)}\Big),\qquad z_2=\rho\Big( 2 a -b -2i  \sqrt{a (b-a)}\Big).
\end{equation}
 We also note at this stage that the equation $f(z)f(1/z)+1=0$ permits two solutions, $v_0$, $1/v_0$ both of which are real with $0<v_0<1$ and
\begin{equation}\label{speq1}
f(v_0)=w, \qquad f(1/v_0) = - 1/w;
\end{equation}
see \cite[\S 2.3]{BFL25}.

\begin{lemma}
  Each $z_j$ lies on the interior of $\Omega$.
\end{lemma}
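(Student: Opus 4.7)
The plan is to exploit that $f$ is a rational function of degree $2$ that is univalent on the open unit disc $\mathbb{D}$, combined with a global degree-counting argument on the Riemann sphere.

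First I would argue that the critical points $u_1,u_2$ lie strictly outside the closed disc $\overline{\mathbb{D}}$. Since $f|_{\mathbb{D}}$ is biholomorphic onto $\mathbb{C}\setminus\overline{\Omega}$, $f'$ cannot vanish in $\mathbb{D}$. In the pre-critical phase $\partial\Omega$ is a smooth (analytic) Jordan curve and $f$ extends conformally across $\partial\mathbb{D}$ (as established in \cite{BFL25}), so $f'$ has no zero on $\partial\mathbb{D}$ either. Hence $u_1,u_2\in\mathbb{C}\setminus\overline{\mathbb{D}}$. As a consistency check, one may solve $f'(u)=0$ directly from \eqref{confm} to obtain $ab\,u^{2}-2a\,u+1=0$, so that $|u_j|^{2}=1/(ab)$, with the inequality $ab<1$ characterising the pre-critical regime.

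Next, since $f$ has degree $2$ on the Riemann sphere, Riemann--Hurwitz gives exactly two critical points (counted with multiplicity), and the local degree of $f$ at each critical point is $2$. Consequently, for each critical value $z_j=f(u_j)$ the full preimage is $f^{-1}(z_j)=\{u_j\}$: the point $u_j$ is the \emph{only} preimage of $z_j$.

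To finish, suppose for contradiction that $z_j\in\mathbb{C}\setminus\overline{\Omega}$. The bijectivity of $f|_{\mathbb{D}}:\mathbb{D}\to\mathbb{C}\setminus\overline{\Omega}$ would produce some $v\in\mathbb{D}$ with $f(v)=z_j$, and since $u_j\notin\overline{\mathbb{D}}$ we have $v\neq u_j$, contradicting $f^{-1}(z_j)=\{u_j\}$. An identical argument using the homeomorphism $f|_{\partial\mathbb{D}}\to\partial\Omega$ rules out $z_j\in\partial\Omega$. Therefore $z_j$ lies in the interior of $\Omega$. The only subtle ingredient is ensuring the critical points do not sit on $\partial\mathbb{D}$, which I expect to be the main (albeit mild) obstacle; it is handled either by the boundary regularity of $\Omega$ in the pre-critical phase or by the explicit computation noted above.
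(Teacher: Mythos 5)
Your proposal is correct and follows essentially the same route as the paper: both arguments use that $f$ is a degree-two branched covering which is conformal on $\mathbb{D}$, so that $u_j\notin\mathbb{D}$ and $u_j$ is the unique preimage of $z_j$, and then derive a contradiction with the surjectivity of $f|_{\mathbb{D}}$ onto $\mathbb{C}\setminus\Omega$ if $z_j$ were outside the droplet. Your additional care with the boundary case ($z_j\in\partial\Omega$, via the explicit computation $|u_j|^2=1/(ab)$) is a mild refinement the paper leaves implicit, but it does not change the substance of the argument.
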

\begin{proof} By symmetry it suffices to consider $z_1$.
    Since $f$ is a twofold branched covering, and the map $f$ is conformal on the interior of the unit disc $\mathbb{D}$, we have $u_1\in \mathbb C\setminus \mathbb D$, and $z_1$ has only one preimage $u_1$. If $z_1$ was in $ \mathbb C\setminus \Omega$, then since we require $f$ to be surjective from $\mathbb D(0,1)$ to $\mathbb C\setminus \Omega $, we must have $u_1$ in the interior of the unit disc, which is a contradiction.
    
    \end{proof}

By definition, the deck transformation, denoted $\mathrm{deck}(u)$, is characterised by the condition that
\begin{equation} \label{aut of f deck}
    f(u)=f(\mathrm{deck}(u)).
\end{equation}
Since $f$ is given by \eqref{confm}, one can see that
\begin{equation} \label{deck rational}
    \mathrm{deck}(u)=\frac{ a-b}{ab }\frac{u}{u-1/b}+\frac{1}{a},
\end{equation}
and the fixed points of $\mathrm{deck}(u)$ are the critical points of $f$, $u_1$ and $u_2$.      
The map $f$ has two inverses denoted by $F_1$ and $F_2$  which are determined by their behaviour at $\infty$,
\begin{equation}\label{invasy}
    F_{1}(z)=\frac{\rho}{z}+\mathcal{O}(z^{-2} ),\qquad  F_{2}(z)=\frac{1}{a}+\frac{\rho(b-a)}{a}\frac{1}{z}+\mathcal{O}(z^{-2}).
\end{equation}
In the case of the first of these, $F_1$ maps the exterior of the droplet $\Omega$ to the unit disc $\mathbb D$ as already noted in the sentence above Theorem \ref{thm1}.
The two inverses are related by
\begin{equation} \label{F12 deck}
    F_{2}(z)=\mathrm{deck}(F_{1}(z)).
\end{equation}

These inverse functions have the explicit forms
\begin{equation}\label{defF1}
   F_1(z)= \frac{b \rho+z-\sqrt{(z-z_1)(z-z_2)}}{2 a z},
\end{equation}
\begin{equation}\label{defF2}
   F_2(z)= \frac{b \rho+z+\sqrt{(z-z_1)(z-z_2)}}{2 a z}.
\end{equation}
The branch cut of the square root is taken as a simple curve $\mathcal{B}$ which joins $z_1$ and $z_2$ while intersecting the real line in the interval $(-1/w,0)$ once.
Moreover, $F_{1}$ has analytic continuation to a second sheet by $F_2$ which is connected to the first sheet in a crisscross manner across $\mathcal{B}$. We will in particular make a precise choice of this cut which will turn out to be the mother body.

\begin{figure}
    \centering
    \includegraphics[scale=1,width=0.6\linewidth]{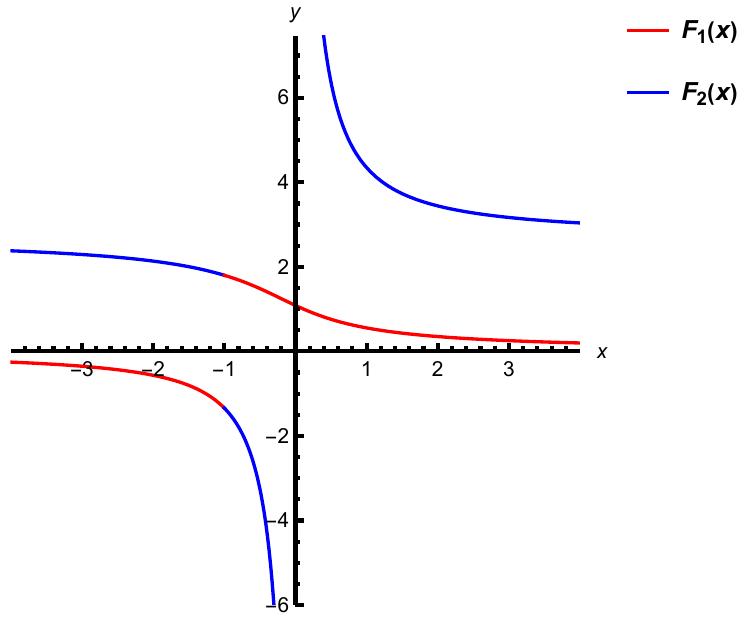}
    \caption{The plot of the function $F_1$ in red and its analytic continuation $F_2$ in blue on the real line. One observes $F_2$ has a pole at $0$.}
\end{figure}

\subsection{Spherical Schwarz function}
Denote 
\begin{equation} \label{def of nu}
\nu=\frac{1}{1+Q_0+Q_1}\nu_0,
\end{equation} 
where we recall the definition $\nu_0$ is the equilibrium measure of \eqref{enf}. 
Applying $\partial_z$ to the variational conditions \eqref{frost1}, we obtain
\begin{equation}\label{ctrans}
  \int \frac{d\nu(s)}{u-s}+\frac{Q_1}{1+Q_0+Q_1}\frac{1}{u-w}=\frac{\overline{u}}{1+|u|^2}, \qquad u\in  \Omega.
\end{equation}

Since $f$ maps the unit circle 
to a closed simple curve $\partial\Omega$, we have $ F_1: \partial\Omega \mapsto \mathbb S^1.$
Note that if 
\begin{equation}
    f(u)=z, \qquad |u|=1, \: z\in \partial \Omega,
\end{equation}
then 
\begin{equation}\label{2.12}
    f (1/F_1(z)) = f(1/u)= f(\overline{u})=\overline{f(u)}=\overline{z}.
\end{equation}
Here we have used that the rational map $f$ has real coefficients.

We define 
\begin{equation} \label{defS1}
S_1(z):= \frac{ f(1/F_1(z))}{1+z  f(1/F_1(z))}, \qquad z \in \mathbb{C} \setminus \mathcal{B}. 
\end{equation} 
It is immediate that $S_{1}(z)$ has a pole at $z=w$ with residue $Q_1/(1+Q_0+Q_1)$.
Also, 
it is meromorphic in $\mathbb C\setminus\mathcal{B}$.
From the precedent of \cite{CK19}, we call $S_1$ the \textit{spherical Schwarz function} of the droplet $\Omega$.

We know from \cite[Eq.~(2.48)]{BFL25} that with $u$ replaced by $f(u)$, $|u| \le 1$ and thus $f(u) \in \overline{\mathbb C \backslash \Omega}$, the RHS of (\ref{ctrans}) can be replaced by the meromorphic function $f(1/u)/(1 + f(u) f(1/u))$. 
Equivalently, we have 
\begin{equation} \label{Cauchy trans and S1}
 \int \frac{d\nu(s)}{z-s}+\frac{Q_1}{1+Q_0+Q_1}\frac{1}{z-w}=  S_{1}(z), \qquad z \in \overline{\mathbb C \backslash \Omega}. 
\end{equation} 
One notes from (\ref{2.12}) that the equality herein for $z \in \Omega$ is consistent with (\ref{ctrans}).  

We define the analytic continuation of $S_1$ across the branch cut   $\mathcal{B}$  (the precise form is yet to be chosen) joining $z_1,z_2$ by the map $F_2$ as 
\begin{equation}\label{defS2}
    S_{2}(z):=\frac{ f(1/F_2(z)) }{1+z f(1/F_2(z))}. 
\end{equation}
Then defining $\mathcal{R}$ to be a two-sheeted Riemann surface with sheets $\mathcal{R}_j$ ($j=1,2$), the map 
\begin{equation}
    S(z):=\begin{cases}
        S_1(z)\quad z \in \mathcal{R}_1,\\
        S_2(z)\quad z \in \mathcal{R}_2,
    \end{cases}
\end{equation}
defines a meromorphic function on $\mathcal{R}$; see Figure~\ref{Fig_Riemann surface two sheets}.

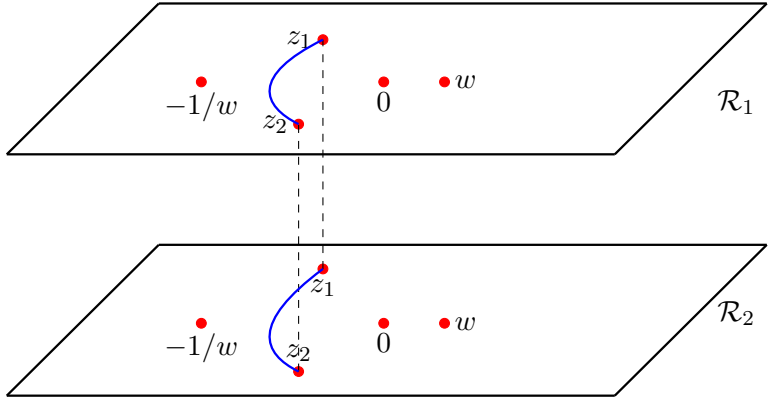
\begin{figure}
\hspace{2cm}
\begin{tikzpicture}[scale=0.8]
         \draw[line width=0.3mm] (0,0) -- (10,0);
         \draw[line width=0.3mm] (0,0)--(2.5,2.5);
           \draw[line width=0.3mm] (2.5,2.5)--(12.5,2.5);
           \draw[line width=0.3mm](10,0)--(12.5,2.5);
            \draw[line width=0.3mm] (0,4) -- (10,4);
         \draw[line width=0.3mm] (0,4)--(2.5,6.5);
           \draw[line width=0.3mm] (2.5,6.5)--(12.5,6.5);
           \draw[line width=0.3mm](10,4)--(12.5,6.5);

               \filldraw[red] (5.2,2.1) circle (2.3 pt)  node[below,black]{$z_1$};
              \filldraw[red] (4.8,0.4) circle (2.3 pt)  node[above,black]{$z_2$};
                \filldraw[red] (4.8,4.5) circle (2.3 pt)  node[left,black]{$z_2$};
                  \filldraw[red] (5.2,5.9) circle (2.3 pt)  node[left,black]{$z_1$};
          \filldraw[red] (6.2,1.2) circle (2.3 pt)  node[below,black]{$0$};
                 \filldraw[red] (7.2,1.2) circle (2.3 pt)  node[right,black]{$w$};
         \filldraw[red] (6.2,5.2) circle (2.3 pt)  node[below,black]{$0$};
                 \filldraw[red] (7.2,5.2) circle (2.3 pt)  node[right,black]{$w$};
                  \filldraw[red] (3.2,5.2) circle (2.3 pt)  node[below,black]{$-1/w$};
                        \filldraw[red] (3.2,1.2) circle (2.3 pt)  node[below,black]{$-1/w$};
        
             \draw[dashed](5.2,2.1)--(5.2,5.9); 
               \draw[dashed](4.8,0.4)--(4.8,4.5);
               \draw [line width=0.3mm, color=blue] (4.8,0.4)..controls(4.2,0.7)and(4,1.2)..(5.2,2.1);
                  \draw [line width=0.3mm, color=blue] (4.8,4.5)..controls(4.2,4.8)and(4,5.3)..(5.2,5.9);

                   \draw (12,1)  node[above]{$\mathcal{R}_2$};
                     \draw (12,4.5)  node[above]{$\mathcal{R}_1$};

   \end{tikzpicture}
    \caption{The Riemann surface $\mathcal{R}$}
    \label{Fig_Riemann surface two sheets}
\end{figure}
 
 We now determine its poles, zeros and an underlying algebraic equation. 
From the asymptotic behaviour given by \eqref{invasy}, it is easy to see that  
\begin{equation}\label{S1be}
    S_{1}(z)=\frac{b \rho^2}{z \left(a+b \rho^2\right)}+\mathcal{O}(z^{-2})=\frac{Q_1+1}{1+Q_0+Q_1}\frac{1}{z}+\mathcal{O} (z^{-2}), \qquad z \to \infty.
\end{equation}  
An alternative derivation is to recall that $S_{1}(z)$ has a pole at $z=w$  with residue $Q_1/(1+Q_0+Q_1)$, and to note that by \eqref{def of nu}, the Cauchy transform of the droplet measure satisfies 
$$\int \frac{d\nu(s)}{z 
-s}=\frac{1}{1+Q_0+Q_1}\frac{1}{z}+\mathcal{O}(z^{-2}), \qquad z \to \infty.$$
 In addition, by \eqref{invasy}, we have  
\begin{equation}\label{S2asy}
    S_{2}(z)=\frac{1}{z}+\mathcal{O}(z^{-2}), \qquad z \to \infty.
\end{equation}
Due to the fact $F_{2}(z)$ has a pole at 0 (as it should as $f$ maps $\infty$ to zero), we have by residue calculation that 
\begin{equation}\label{pole}
    S_{2}(z)=\frac{Q_1+1}{1+Q_0+Q_1}\frac{1}{z}+\mathcal{O}(z^{-2}) , \qquad z \to 0. 
\end{equation}
\begin{remark}
Note that $f$ has a pole at $1/a$ and therefore 
$1/F(f(a))=1/a$ at $f(a)$, telling us that $S$ has an isolated singularity at $f(a)$. However, due to the pole appearing in the numerator and denominator together in \eqref{defS2}, the singularity is removable.
\end{remark}

Recall \eqref{speq1}. 
Since the sum of residues of a meromorphic function $S_2$ (viewed on the Riemann surface $\mathcal{R}$) should vanish, it follows from \eqref{defS2}, \eqref{pole}, \eqref{S1be}, \eqref{S2asy} and \eqref{speq1} that $S_{2}$ has a pole at $-1/w$ with residue $(1+Q_0)/(1+Q_0+Q_1)$. In addition, by construction \eqref{defS1} and \eqref{defS2}, we have a zero at $f(b)$ on the first sheet. To summarise, we have the following picture for zeros and poles of $S$ on the Riemann surface $\mathcal{R}$.
\begin{itemize}
    \item Poles: At $w$ on the first sheet $\mathcal{R}_1$ with residue  $\frac{Q_1}{1+Q_0+Q_1}$. At $0,-1/w$ on the second sheet $\mathcal{R}_2$ with residues $\frac{1+Q_1}{1+Q_0+Q_1}$ and $\frac{1+Q_0}{1+Q_0+Q_1}$ respectively.
    \item Zeros : $f(b),\infty$ at the first sheet $\mathcal{R}_1$ and $\infty$ on the second sheet $\mathcal{R}_2$. 
\end{itemize} 

 \begin{lemma}\label{node}
     There exists $c_0$, with $c_0>w$, such that $S_1(c_0)=S_{2}(c_0)$.
 \end{lemma}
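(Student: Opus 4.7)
The plan is to prove the existence of $c_0$ by an intermediate value argument applied to $h(z) := S_1(z) - S_2(z)$ restricted to the real interval $(w,\infty)$. The first step is to verify that $h$ is continuous (indeed real-analytic) on $(w,\infty)$. Both $S_1$ and $S_2$ are meromorphic on $\mathbb{C}\setminus \mathcal{B}$, and by the choice of the branch cut $\mathcal{B}$, it intersects the real axis only in the interval $(-1/w,0)$. Hence on $(w,\infty)$ the functions $F_1$ and $F_2$ are real-analytic, and so are $S_1$ and $S_2$ in view of the definitions \eqref{defS1} and \eqref{defS2}.

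Next, I would examine the behaviour of $h(z)$ as $z\to w^+$. From \eqref{Cauchy trans and S1} (or directly from the fact that $f(1/F_1(w))=f(v_0)=w$ by \eqref{speq1}, which makes the denominator $1+zf(1/F_1(z))$ vanish to first order at $z=w$), the function $S_1$ has a simple pole at $z=w$ with positive residue $Q_1/(1+Q_0+Q_1)$. On the other hand, $S_2$ has its poles only at $0$ and at $-1/w$ (as recorded immediately before the lemma), so $S_2(w)$ is finite. Consequently,
\begin{equation*}
h(z) = S_1(z) - S_2(z) \longrightarrow +\infty \quad \text{as } z\to w^+.
\end{equation*}

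For the behaviour as $z\to\infty$, I would simply subtract \eqref{S2asy} from \eqref{S1be} to obtain
\begin{equation*}
h(z) = \Bigl(\frac{Q_1+1}{Q_0+Q_1+1} - 1\Bigr)\frac{1}{z} + \mathcal{O}(z^{-2}) = -\frac{Q_0}{Q_0+Q_1+1}\,\frac{1}{z} + \mathcal{O}(z^{-2}),
\end{equation*}
which is strictly negative for all sufficiently large $z>0$, since $Q_0>0$. Combining this with the previous step and the continuity of $h$ on $(w,\infty)$, the intermediate value theorem yields the existence of some $c_0\in(w,\infty)$ with $h(c_0)=0$, i.e.\ $S_1(c_0)=S_2(c_0)$, completing the proof.

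I do not foresee a serious obstacle here; the argument is essentially an IVT comparison, and all the ingredients (location of poles of $S_1$ and $S_2$, asymptotics at infinity, placement of the branch cut) are already in hand from the preceding discussion. The only point requiring minor care is the verification that $S_2(w)$ is finite, but this is immediate from the identification of the poles of $S_2$.
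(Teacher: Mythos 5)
Your proposal is correct and follows essentially the same route as the paper: both arguments note that $S_1,S_2$ are real-analytic on $(w,\infty)$, that the positive-residue pole of $S_1$ at $w$ forces $S_1>S_2$ just to the right of $w$, that the expansions \eqref{S1be} and \eqref{S2asy} force $S_2>S_1$ for large $s$, and then invoke the intermediate value theorem. Your version merely makes the sign comparison at infinity explicit by computing the leading coefficient $-Q_0/(Q_0+Q_1+1)$ of $S_1-S_2$.
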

 \begin{proof}
     We first observe that $S_1,S_2$ are real and analytic in $(w,\infty)$. In addition, $S_1$ has a pole at $w$, with a positive residue, giving us $S_{1}(s)>S_{2}(s)$ for $s\in (w, w+\varepsilon)$, with $\varepsilon>0$ sufficiently small. Next, from \eqref{S1be} and \eqref{S2asy} we can conclude $S_{2}(s)>S_1(s)$ for sufficiently large $s$. The proof is then complete, invoking the intermediate value theorem.
 \end{proof}

\subsection{Spectral curve}
With the knowledge in the previous subsection, we are ready to determine the underlying algebraic curve, referred to as the \textit{spectral curve}.
The symmetric functions of the branches of a meromorphic function are rational; hence, we have $S_{1}+S_{2}$ and $S_1S_2$ are rational functions.
Define,
\begin{align}
\begin{split} \label{P1}
P_{1}(z) & :=    S_{1}(z)+S_{2}(z)
\\
& =\frac{1+Q_0}{1+Q_0+Q_1}\frac{1}{z+1/w} +\frac{Q_1}{1+Q_0+Q_1}\frac{1}{z-w} +\frac{Q_1+1}{1+Q_0+Q_1}\frac{1}{z}, 
\end{split}
\end{align} 
and 
\begin{equation}\label{P2}
  P_{2}(z) :=  S_{1}(z)S_{2}(z)
  = \frac{Q_1+1}{1+Q_0+Q_1}\frac{z-f(b)}{(z-w)(z+1/w)z}.
\end{equation}
Here, to evaluate the coefficients in \eqref{P2}, we have used the asymptotic behaviours \eqref{S1be} and \eqref{S2asy}, as well as the fact that $S$ has a zero at $f(b)$ by construction \eqref{defS1}, \eqref{defS2}, \eqref{2.12} and \eqref{confm}.

Hence we have determined the \textit{spectral curve} completely, as specified by
\begin{equation}\label{spectralcurve}
   P(S,z)= S^2+P_1(z)S+P_{2}(z)=0,
\end{equation}
\begin{figure}
    \centering
    \includegraphics[scale=0.8,width=0.5\linewidth]{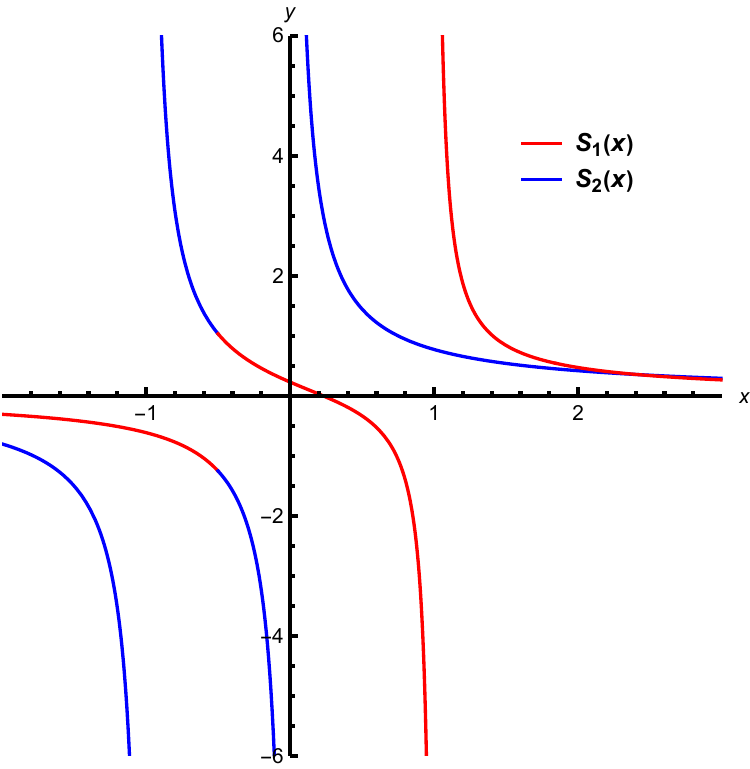}
    \caption{The Schwarz function $S_1$ and $S_2$ on the real axis. One can observe three poles, one zero and a node at the real axis.}
    \label{Schwarzplotl}
\end{figure}
where $P_{1}(z)$ and $P_{2}(z)$ are given by the RHS of \eqref{P1} and $\eqref{P2}$.
\begin{remark}
   $P(S,z)$ is not symmetric in $S,z$. In $S$ it has degree 2, while in $z$ degree 3.
\end{remark}

We now write the rational function
\begin{equation} \label{def of R}
R(z):= P_1(z)^2-4 P_2(z) 
\end{equation}
for the discriminant of the algebraic curve \eqref{spectralcurve}.
Completing the square in \eqref{spectralcurve} we have 
\begin{equation}\label{S2id}
    \Big(S-\frac{P_1(z)}{2}\Big)^2
    =\frac{1}{4}R(z).
\end{equation}  
By (\ref{S2id}), we obtain 
\begin{equation} \label{S2 1 sqrtR}
S_2(z)-\frac{P_{1}(z)}{2}=\frac{1}{2}\sqrt{R(z)},\qquad   S_1(z)-\frac{P_{1}(z)}{2}=-\frac{1}{2}\sqrt{R(z)}. 
\end{equation}
In the above and what follows the branch cut of $\sqrt{R(z)}$ is taken to be $\mathcal{B}$ (the same branch cut of $F_1$ defined in \eqref{defF1}), and we also have for this branch that as $z\rightarrow\infty$,
\begin{equation} \label{R infty asymp}
    \sqrt{R(z)}=\frac{Q_0}{1+Q_0+Q_1}\frac{1}{z}+\mathcal{O}(z^{-2});
\end{equation}
see \eqref{S1be}, \eqref{S2asy} and \eqref{P1}.
In addition, it follows from \eqref{S2 1 sqrtR} that 
\begin{equation} \label{R S12 square}
R(z)=   ( S_{1}(z)-S_{2}(z))^2. 
\end{equation}

We can make $R(z)$ more explicit. By using \eqref{S1be}, \eqref{S2asy}, \eqref{P1}, \eqref{P2} and \eqref{def of R}, we have  
\begin{equation}
    R(z)=\Big(\frac{Q_0}{1+Q_0+Q_1}\Big)^2\frac{1}{z^2}+\mathcal{O}(z^{-3}), \quad z\rightarrow \infty.
\end{equation}
Also, we have $R(z)$ to be a rational function with double poles at $w,-1/w,0$.
Hence, it must be that 
\begin{equation}
    R(z)=\frac{P_4(z)}{(z-w)^2(z+1/w)^2z^2},
\end{equation}
where $P_{4}(z)$ is a quartic polynomial. We determine its zeros. As we observed that $R(z)$ coincides with the discriminant of the spectral curve, the zeros occur when $S_1(z)=S_2(z)$. Two solutions are evidently the branch points $z_1,z_2$. Further Lemma \ref{node} gives us $c_0$ is a double zero. Thus we have
\begin{equation}\label{defR}
    R(z)= \Big(\frac{Q_0}{1+Q_0+Q_1}\Big)^2\frac{(z-z_1)(z-z_2)(z-c_0)^2}{(z-w)^2(z+1/w)^2z^2},
\end{equation}
where $z_1, z_2$ are given as the critical values of the conformal map $f$ and  $c_0$ is a multiple zero of  $\mathrm{Disc}(P(S,z),S)$.
In regards to this, we also have from \eqref{S2 1 sqrtR} that
\begin{equation}
R(z)=    \Big(S_2(z)-\frac{P_{1}(z)}{2}\Big)^2
\end{equation}
and $S_2(z)$ has a pole at $0$ with residue $(Q_1+1)/(Q_0+Q_1+1)$. 
By comparing the leading-order coefficient of $1/z^2$ in the two expressions for $R$ as $ z \to 0 $, and using \eqref{pole}, \eqref{P1}, along with the relation $ z_2 = \overline{z}_1 $, we obtain
\begin{equation} \label{c0 evaluation}
    c_0=\frac{1+Q_1}{Q_0 }\frac{1}{|z_1|}.
\end{equation}

Next we make note of a particular property relating to $c_0$ which will be required in subsequent working.
\begin{lemma}\label{crit1}
    Suppose $z\in \mathbb C \setminus \partial \Omega$ is such that 
    \begin{equation} \label{2.32}
    S_{1}(z)=\frac{\overline{z}}{1+|z|^2}.
    \end{equation}
    Then $z=c_0$.
\end{lemma}

\begin{proof}
    Let $z=f(v)$. Then by \eqref{defS1} the condition \eqref{2.32} is rewritten as
\begin{equation}
   \frac{f(1/v)}{1+f(v)f(1/v)}=\frac{f(\overline{v})}{1+f(v)f(\overline{v})}.
\end{equation}
Simplifying shows 
\begin{equation}
    f(1/v)=f(\overline{v}),
\end{equation}
which means 
\begin{equation}\label{deck1}
    \mathrm{deck}(v)=1/\overline{v},
\end{equation}
where we have used \eqref{aut of f deck}.
Therefore we have 
\begin{equation}
    S_{1}(z)=\frac{f(1/v)}{1+f(1/v)f(v)}=\frac{f(\overline{v})}{1+f(v)f(\overline{v})}=\frac{f(1/\mathrm{deck}(v))}{1+f(1/\mathrm{deck}(v))f(v)}=S_{2}(z),
\end{equation}
which implies $z=z_1$ or $z_2$ or $c_0$ by the discussion above \eqref{defR}.
Since $u_1$ and $u_2$ are fixed points of $\mathrm{deck}(u)$, it follows that if $z=z_k$ for $k=1$ or $2$, by \eqref{deck1}, $|u_k|=1$, which implies that $z \in \partial \Omega$.
Hence, $z$ must be $c_0$.
\end{proof}

Recall \eqref{frost1} for the definition of $\mathcal{U}_{\mathrm{2D}}$. As a corollary, we observe that $c_0$ is the only critical point of $\mathcal{U}_{\mathrm{2D}}$ in $\mathbb C\setminus\Omega$.
\begin{lemma}\label{strictineq}
    The inequality in \eqref{frost1} is strict. That is
    \begin{equation}
           \mathcal{U}_{2D}(z)> 0, \quad z\in \mathbb C\setminus \Omega.
    \end{equation}
\end{lemma}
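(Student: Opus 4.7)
The plan is to promote the weak inequality in \eqref{frost1} to a strict one via a critical‐point analysis in the open exterior, combined with strict subharmonicity. First, apply $\partial_z$ to \eqref{frost1}. Using the Cauchy transform identity \eqref{Cauchy trans and S1} together with the normalization $\nu_0=(1+Q_0+Q_1)\nu$ from \eqref{def of nu}, one obtains
\begin{equation*}
\partial_z \mathcal{U}_{2D}(z)=(1+Q_0+Q_1)\Big(\frac{\bar z}{1+|z|^2}-S_1(z)\Big),\qquad z\in\overline{\mathbb{C}\setminus\Omega}.
\end{equation*}
Applying $\partial_{\bar z}$ yields $\Delta\mathcal{U}_{2D}(z)=4(1+Q_0+Q_1)/(1+|z|^2)^2>0$ on $\mathbb{C}\setminus\overline\Omega$; hence $\mathcal{U}_{2D}$ is strictly subharmonic there.

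Now suppose, for contradiction, that $\mathcal{U}_{2D}(z_0)=0$ for some $z_0\in\mathbb{C}\setminus\Omega$. Since $\mathcal{U}_{2D}\ge 0$ on this open set (by \eqref{frost1}) and is real-analytic away from $w\in\mathrm{int}(\Omega)$, the point $z_0$ must be a local minimum and in particular a critical point: $\partial_z\mathcal{U}_{2D}(z_0)=0$, which by the identity above is equivalent to $S_1(z_0)=\bar z_0/(1+|z_0|^2)$. Lemma~\ref{crit1} then forces $z_0=c_0$, and in particular $c_0\in\mathbb{C}\setminus\overline\Omega$.

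It remains to rule out the possibility that $c_0$ is a local minimum of $\mathcal{U}_{2D}$ with value zero; this is the main obstacle. Because $c_0$ is real and $S_1$ is real-analytic on a real neighbourhood of $c_0$ (by the real-symmetric structure of the spectral curve \eqref{spectralcurve}), the off-diagonal Hessian entry $\partial_x\partial_y\mathcal{U}_{2D}(c_0)$ vanishes and the diagonal entries equal $2(1+Q_0+Q_1)\bigl(1/(1+c_0^2)^2\mp S_1'(c_0)\bigr)$. A local minimum would force $|S_1'(c_0)|(1+c_0^2)^2\le 1$. The plan is to evaluate $S_1'(c_0)$ explicitly using $S_1+S_2=P_1$ from \eqref{P1} and $(S_1-S_2)^2=R$ from \eqref{R S12 square}, together with the double zero of $R$ at $c_0$ encoded in \eqref{defR}:
\begin{equation*}
S_1'(c_0)=\tfrac{1}{2}\bigl(P_1'(c_0)-\sqrt{G(c_0)}\bigr),\qquad G(z):=R(z)/(z-c_0)^2,
\end{equation*}
and then, using the explicit value \eqref{c0 evaluation} of $c_0$ in terms of $Q_0,Q_1$ and $|z_1|$, to verify the strict reverse inequality $|S_1'(c_0)|(1+c_0^2)^2>1$. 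This makes $\det(\mathrm{Hessian})<0$, so $c_0$ is a saddle of $\mathcal{U}_{2D}$, contradicting the local minimum assumption and completing the proof. The only substantive computation is this last inequality, which reduces to an algebraic check among the conformal parameters $(\rho,a,b)$ of Section~2 together with $(Q_0,Q_1,w)$.
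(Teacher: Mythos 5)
Your first half is sound and coincides with the paper's strategy: a zero of $\mathcal{U}_{2D}$ in the open exterior would be a local minimum, hence a critical point, and by \eqref{Cauchy trans and S1} the critical point equation is exactly condition \eqref{2.32}, so Lemma~\ref{crit1} forces the point to be $c_0$. The gap is in the second half, which is also where you genuinely diverge from the paper. The paper rules out the hypothetical minimum softly: the Mountain Pass Theorem produces a critical point on a path joining $\Omega$ to the minimum, so together with the minimum itself one gets more than one critical point, contradicting the uniqueness supplied by Lemma~\ref{crit1}. You instead propose to show that $c_0$ is a saddle by computing the Hessian — but you never carry out the decisive step. The inequality $|S_1'(c_0)|(1+c_0^2)^2>1$ is only announced as ``an algebraic check among the conformal parameters''; since the relations between $(\rho,a,b)$ and $(Q_0,Q_1,w)$ are given only implicitly in the cited companion paper, this is not a routine verification, and without it the argument does not close.

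Moreover, the Hessian itself is miscomputed. From $\partial_z\mathcal{U}_{2D}=(1+Q_0+Q_1)\bigl(\bar z/(1+|z|^2)-S_1(z)\bigr)$ one gets $\partial_z^2\mathcal{U}_{2D}(c_0)=(1+Q_0+Q_1)\bigl(-c_0^2/(1+c_0^2)^2-S_1'(c_0)\bigr)$, so the diagonal entries are $2(1+Q_0+Q_1)\bigl((1\mp c_0^2)/(1+c_0^2)^2\mp S_1'(c_0)\bigr)$; you dropped the contribution $-\bar z^2/(1+|z|^2)^2$ of $\partial_z^2\log(1+|z|^2)$. Consequently $\det(\mathrm{Hess})<0$ at $c_0$ is equivalent to $\bigl|c_0^2/(1+c_0^2)^2+S_1'(c_0)\bigr|>1/(1+c_0^2)^2$, not to the inequality you state, so even the target of the unperformed computation is off. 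If you wish to keep this route you must redo the Hessian and then actually establish the sign of its determinant from the explicit data \eqref{P1}, \eqref{defR} and \eqref{c0 evaluation}; the paper's mountain-pass argument is designed precisely to avoid this computation.
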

\begin{proof}
By \eqref{frost1} we find $\partial \mathcal{U}_{2D}=\overline{\partial} \mathcal{U}_{2D}=0$ on $\Omega$. Assume that there is a local minimum of $ \mathcal{U}_{2D}$ on  $\mathbb C\setminus \Omega$. Then the Mountain Pass Theorem guarantees that there is a critical point on a path connecting $\Omega$ and the local minimum. 
By \eqref{Cauchy trans and S1}, one can notice that $\partial \mathcal{U}_{2D}$ is equivalent to the condition \eqref{2.32}.
On the other hand, by Lemma \ref{crit1} there is at most one critical point in $ \mathcal{U}_{2D}$, therefore there is no local minimum of $ \mathcal{U}_{2D}$ on  $\mathbb C\setminus \Omega$.
\end{proof}

\section{The mother body}\label{mbsection}

In this section Theorem \ref{thmmoth} will be established.

\subsection{Analysis of the trajectories of the quadratic differential}

We are interested in the trajectories of the quadratic differential $$R(z)\,dz^2<0.$$
This in essence will give us a real measure $\mu$ with density
$$d\mu(z)=\frac{1}{2\pi i}\sqrt{R(z)}\, dz,$$
supported on the critical trajectories. Here, $dz$ represents the complex line element.

We recall from \eqref{defR} and \eqref{c0 evaluation}
\begin{equation}
    R(z)=\Big(\frac{Q_0}{1+Q_0+Q_1}\Big)^2\frac{(z-z_1)(z-z_2)(z-c_0)^2}{z^2(z-w)^2(z+1/w)^2}, \qquad c_0=\frac{1+Q_1}{Q_0 }\frac{1}{|z_1|}. 
\end{equation}
In that regard, we have the meromorphic differential 
$$
(S_1(z)-S_{2}(z))^2\, dz^2=R(z)\, dz^2.
$$
By general theory \cite{J58, P75, S84}, we have the following rules for the trajectories of the quadratic differential $R(z)\, dz^2<0$.
\begin{itemize}
    \item[(1)] $z_1,z_2$ are simple zeros of $R(z)$. Hence, there are three equiangular arcs emanating from $z_1$ and $z_2$.
    \item[(2)] $c_0$ is a double zero of $R(z)$. Therefore, there are four equiangular arcs emanating from $c_0$.
    \item[(3)] $0,w,-1/w$ are double poles with positive residues. Hence, the trajectories near $0,w,-1/w$ are locally circular.
    \item[(4)] As $z\rightarrow\infty$ we have $R(z)\sim c_1z^{-2}$, with $c_1>0$ as a result, infinity is a double pole with positive residue. Then the trajectories near infinity are also circular, that is closed loops.
    \item [(5)] We use the Teichmüller's Lemma \cite{P75, S84}. A consequence of it is that a simply connected domain bounded by critical trajectories that does not contain a pole on its boundary must have a pole in its interior. 
    \item[(6)] By symmetry, if $\gamma$ is a trajectory, then so is $\overline{\gamma}$.
\end{itemize}

Due to item (4), the short critical trajectories are finite and do not escape to infinity. Consequently, the trajectories emanating from $ z_1 $ in the upper half-plane must terminate at either \( z_2 \) or \( c_0 \). Moreover, at most one of these three trajectories can end at \( c_0 \), as ensured by items (5) and (6).

In the following, we show all the critical trajectories emanating from $z_1$ terminate at $z_2$.

\subsubsection{Potential theoretic preliminaries}

We begin with some definitions.


\begin{definition} For $z \in \mathbb{C}\setminus \mathcal{ B}$, define
    \begin{equation}\label{defU}
    \mathcal{U}(z):=\log(1+|z|^2)-\log(1+|z_0|^2)-2\Re \int_{z_0}^{z}S_{1}(s) \, ds,
\end{equation}
where the contour of integration is away from the branch cut $\mathcal{B}$ and $z_0=f(1)$ is in $\partial \Omega$.
\end{definition}

Notice that by \eqref{frost1} \eqref{Cauchy trans and S1}, \eqref{def of nu} and \eqref{defU}
we have 
$$
\frac{\partial\mathcal{U}_{\mathrm{2D}}}{\partial z}=(1+Q_0+Q_1)\frac{\partial\mathcal{U}}{\partial z}, \qquad z\in \mathbb  C\setminus \Omega,
$$
while $$\mathcal{U}_{\mathrm{2D}}(z_0)=(1+Q_0+Q_1)\mathcal{U}(z_0)=0.$$
Hence we can conclude
\begin{equation}\label{relaU}
\mathcal{U}_{2D}(z) = (1+Q_0+Q_1) \mathcal{U}(z), \qquad z \in \overline{\mathbb C \setminus \Omega}. 
\end{equation}
Next, we define an  integral with $\sqrt{R(s)}$ as the integrand. 
\begin{definition}
  For $z \in \mathbb{C}\setminus \mathcal{ B}$, define
\begin{equation}\label{defU0}
      \mathcal{U}_{0}(z):=\Re \int_{z_1}^{z} \sqrt{R(s)} \, ds,
\end{equation}
where the contour of integration is in $\mathbb C \setminus\mathcal{B} $.
\end{definition}

In \eqref{defU} and \eqref{defU0}, the contour is chosen away from the poles at $0,w$ and $-1/w$. Since the residues are purely real the real part of the integral then does not depend on the choice of the contour. Recall that $c_0$ is defined in Lemma~\ref{node} and is given by the expression in \eqref{c0 evaluation}.

\begin{proposition}\label{traj1}
We have 
$$
\mathcal{U}_{0}(c_0)=\mathcal{U}(c_0)>0.
$$
\end{proposition}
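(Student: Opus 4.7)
The proposition splits into the \emph{equality} $\mathcal U_0(c_0)=\mathcal U(c_0)$ and the \emph{positivity} $\mathcal U(c_0)>0$. For the latter, one first checks that $c_0$ lies in the strict exterior $\mathbb{C}\setminus\overline{\Omega}$: since $c_0>w$ is real and the droplet intersects the positive real axis in a bounded interval stretching from some $x_-<w$ only up to some $x_+<c_0$ (a fact that follows from the explicit parameterisation \eqref{confm} together with \eqref{c0 evaluation}), the relation $\mathcal U(z)=\mathcal U_{2D}(z)/(Q_0+Q_1+1)$ on $\mathbb{C}\setminus\Omega$ combined with Lemma~\ref{strictineq} yields $\mathcal U(c_0)>0$.

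For the equality, the plan is to differentiate and integrate. From \eqref{defU} and \eqref{defU0}, together with $\sqrt R=S_2-S_1$ from \eqref{S2 1 sqrtR},
\[
\partial_z\mathcal U(z)=\frac{\bar z}{1+|z|^2}-S_1(z),\qquad \partial_z\mathcal U_0(z)=\tfrac12\sqrt{R(z)}=\tfrac12(S_2(z)-S_1(z)).
\]
Subtracting and using $P_1=S_1+S_2$ from \eqref{P1}, I get $\partial_z(\mathcal U-\mathcal U_0)(z)=\bar z/(1+|z|^2)-P_1(z)/2$. This is exactly $\partial_z$ of the real-valued function
\[
H(z):=\log(1+|z|^2)-\frac{(1+Q_0)\log|z+1/w|+Q_1\log|z-w|+(Q_1+1)\log|z|}{Q_0+Q_1+1},
\]
so on the connected open set $\mathbb{C}\setminus(\mathcal B\cup\{0,w,-1/w\})$ the two functions differ by a real constant: $\mathcal U(z)-\mathcal U_0(z)=H(z)+C$ for some $C\in\mathbb{R}$.

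It then remains to show $H(c_0)+C=0$. I would pin down $C$ by asymptotic matching as $z\to\infty$. Expanding \eqref{frost1} and using that $\nu_0$ is a probability measure gives $\mathcal U(z)=\tfrac{2Q_0}{Q_0+Q_1+1}\log|z|+\tfrac{\ell_{\mathrm{2D}}}{Q_0+Q_1+1}+o(1)$, while \eqref{R infty asymp} together with the explicit form \eqref{defR} of $R$ yields $\mathcal U_0(z)=\tfrac{Q_0}{Q_0+Q_1+1}\log|z|+M_0+o(1)$ for an explicit constant $M_0$ computable in terms of the branch points $z_1,z_2$ and the node $c_0$. Since $H(z)=\tfrac{Q_0}{Q_0+Q_1+1}\log|z|+o(1)$, this gives $C=\ell_{\mathrm{2D}}/(Q_0+Q_1+1)-M_0$. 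The identity $C=-H(c_0)$ then reduces to an explicit algebraic relation, verifiable using $c_0=(1+Q_1)/(Q_0|z_1|)$ from \eqref{c0 evaluation}, the factorisation \eqref{defR}, and $z_2=\overline{z_1}$. The main obstacle lies in this last algebraic step, which hides a delicate cancellation between the 2D Robin constant $\ell_{\mathrm{2D}}$ and the subleading part of the mother-body integral encoded in $M_0$.
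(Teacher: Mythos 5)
Your reduction is correct as far as it goes: the computation $\partial_z(\mathcal U-\mathcal U_0)=\bar z/(1+|z|^2)-P_1(z)/2=\partial_z H$ is right, and since all three functions are real-valued this does give $\mathcal U-\mathcal U_0=H+C$ on the connected set $\mathbb C\setminus(\mathcal B\cup\{0,w,-1/w\})$ (the real parts are path-independent because all residues involved are real). But the proof stops exactly where the content of the proposition begins. The equality $\mathcal U(c_0)=\mathcal U_0(c_0)$ is \emph{not} an identity that holds for generic $z$ — your relation $\mathcal U-\mathcal U_0=H+C$ holds everywhere, while $\mathcal U=\mathcal U_0$ holds only at special points — so proving $H(c_0)+C=0$ must somewhere invoke the defining property of $c_0$, namely that $v=F_1(c_0)$ satisfies $\mathrm{deck}(v)=1/\overline v$ (equivalently $S_1(c_0)=\overline{c_0}/(1+|c_0|^2)$, the converse direction of Lemma~\ref{crit1}). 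Your proposal never uses this property; instead you defer to an ``explicit algebraic relation'' between $\ell_{\mathrm{2D}}$, the constant $M_0=\Re\lim_{z\to\infty}\bigl[\int_{z_1}^z\sqrt{R}-\tfrac{Q_0}{1+Q_0+Q_1}\log z\bigr]$, and $H(c_0)$, which you explicitly acknowledge you have not verified. That unverified cancellation \emph{is} the proposition, so the argument is incomplete.

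There is also reason to doubt the route would close without major extra work. Evaluating $C$ by matching at infinity requires the explicit value of $\ell_{\mathrm{2D}}$ (imported from \cite{BFL25}) and a closed-form evaluation of $M_0$, i.e.\ of $\int_{z_1}^{\infty}\sqrt{R}$ regularized — a nontrivial abelian integral even on this genus-zero curve. The paper avoids all constants: it pulls the integral $\int_{z_1}^{c_0}(S_2-S_1)\,ds$ back to the $u$-plane, uses \eqref{aut of f deck} to rewrite it as $2\int_v^{\mathrm{deck}(v)}$, splits at $v_0\in\partial\mathbb D$, and uses $\mathrm{deck}(v)=1/\overline v$ together with the reality of $f$ to fold the second piece into $\log(1+|c_0|^2)-\log(1+|z_0|^2)-\int_{\overline z_0}^{\overline c_0}S_1$, yielding $2\mathcal U_0(c_0)=\mathcal U(c_0)+\mathcal U(\overline{c_0})$ directly. (Note also that the paper's Lemma~\ref{lemmrela}, which relates $\ell_0$ to $\ell_{\mathrm{2D}}$, is itself \emph{derived from} the mechanism of this proposition, so a constant-matching proof risks circularity unless the constants are computed independently.) Your positivity argument is essentially the paper's (Lemma~\ref{strictineq} plus $c_0\in\mathbb C\setminus\Omega$), and is fine modulo the asserted location of $\partial\Omega\cap\mathbb R_{>0}$.
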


\begin{proof}
    The inequality is an easy consequence of Lemma \ref{strictineq} and \eqref{relaU} combining with Lemma \ref{node} which tells us $c_0\in \mathbb C \setminus \Omega$.
    We prove the first equality. Let us denote $F_{1}(c_0)=v$, where $F_1$ is the conformal map defined \eqref{defF1}. Then by \eqref{deck1} we have $\mathrm{deck}(1/v)=\overline{v}$. Then by \eqref{R S12 square}, \eqref{defS1}, \eqref{defS2} and \eqref{F12 deck}, we have 
    \begin{align} \label{real1}
    \begin{split}
    &\quad \int_{z_1}^{c_0}\sqrt{R(s)}\, ds= \int_{z_1}^{c_0}(S_{2}(s)-S_{1}(s)) \, ds
    \\
    &=\int_{u_1}^{v}\frac{f(1/\mathrm{deck}(u))}{1+f(1/\mathrm{deck}(u))f(u)} d(f(\mathrm{deck}(u)))  -\int_{u_1}^{v}\frac{f(1/u)}{1+f(1/u)f(u)}d(f(u))
    \\&=\int_{v}^{\mathrm{deck}(v)}\frac{f(1/u)}{1+f(1/u)f(u)}d(f(u)). 
    \end{split}
    \end{align} 
    Recall $z_0\in \partial \Omega$ is given by $f(1)$.
  We decompose the last integral as $I_1+I_2$, where  
    $$
    I_1:= \int _{v}^{1}   \frac{f(1/u)}{1+f(1/u)f(u)}d(f(u)), \qquad I_2:= \int _{1}^{\mathrm{deck}(v)}\frac{f(1/u)}{1+f(1/u)f(u)}d(f(u)). 
    $$
   
    Notice that by definition \eqref{defS1}, we have 
    \begin{equation}\label{pot1}
        I_1=-\int_{z_0}^{c_0}S_{1}(s)\,ds. 
    \end{equation}
    On the other hand, we have with $u=1/x$, 
    \begin{align}
    \begin{split}
I_2 
    &=\int_{1}^{\mathrm{deck}(v)}d\log (1+f(1/u)f(u))+\frac{f(u)f'(1/u)}{u^2(1+f(1/u)f(u) ) } \, du
    \\
    &=\Big[\log (1+f(1/u)f(u))\Big]_{1}^{\mathrm{deck}(v)}+\int_{1/\mathrm{deck}(v)}^{1}\frac{f(1/x)f'(x)}{1+f(1/x)f(x)}\,dx   
    \\
    &=\log(1+|c_0|^2)-\log(1+|z_0|^2)-\int^ {c_0}_{z_0} S_{1}(x)\,dx,\label{pot2}
    \end{split}
    \end{align}
where in the last equality of \eqref{pot2} we used \eqref{defS1}. Combining \eqref{pot1}, \eqref{pot2} and \eqref{defU} we obtain 
\begin{equation}\label{steas}
    \begin{aligned}
    &\quad \mathcal{U}_{0}(c_0)=\Re \int_{z_1}^{c_0} \sqrt{R(s)} \, ds=\Re ( I_1+ I_2)\\
    &= \log(1+|c_0|^2)-\log(1+|z_0|^2)-2\Re\int^{\overline{c}_0}_{\overline{z}_0} S_{1}(x)\, dx
    \\
    &= \mathcal{U}(c_0).
    \end{aligned} \end{equation}
This finishes the proof.
\end{proof}

\begin{remark}
We expect Proposition \ref{traj1} to be quite general. We point out that a  variant of this proposition is given in \cite[Appendix C]{BBLM15}.
\end{remark}

We have seen that the three equiangular trajectories emanating from \( z_1 \) must all terminate at \( z_2 \), forming two simply connected domains, \( \widetilde{\Omega}_1 \) and \( \widetilde{\Omega}_2 \), each containing at least one pole.  
By Teichmüller's Lemma \cite{P75,S84}, if a domain \( \widetilde{\Omega}_j \) contains two poles, it must also contain two zeros (counted with multiplicty, hence in this case it must contain $c_0$). This reduces to two cases:

\begin{figure}
    \centering
    \begin{tikzpicture}
    \node[anchor=south west,inner sep=0] at (0,0) 
    {\includegraphics[width=0.9\linewidth]{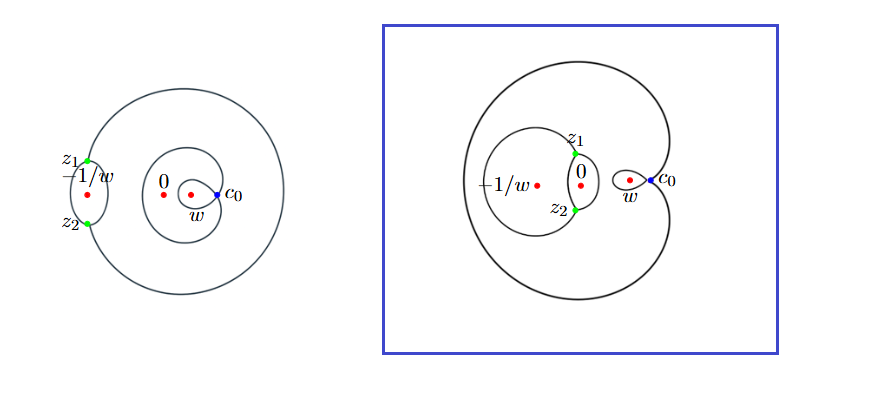}};
    \end{tikzpicture}
    \caption{Possible topological configuration of critical trajectories. Case I in left, Case II (in box) in right which satisfies the inequality in Proposition \ref{traj1}. }
    \label{topconfg}
\end{figure}

\begin{itemize}
    \item Case I: $\widetilde{\Omega}_1$ contains $0,w$ and $c_0$, while $\widetilde{\Omega}_2$ contains $-1/w$. This corresponds  left panel  of Figure \ref{topconfg}.
    \item Case II: $\widetilde{\Omega}_1$ contains $0$ but not $w$ and $c_0$ and $\widetilde{\Omega}_2$ contains $-1/w$. This corresponds to right panel of Figure \ref{topconfg}.
\end{itemize}

We now negate Case I. Since we have $\sqrt{R(x)}>0$ for $x>c_0$ and  $\sqrt{R(x)}<0$  for $x\in (w,c_0)$. Then $\int_{x}^{c_0}\sqrt{R(s)}ds<  0$ for $x\in (w,\infty) $ and $x\neq c_0$. Suppose Case I is taking place; then we have a trajectory (part of the boundary of $\Omega_1$) emanating from $z_1$ intersecting the real line at $x_0$ with $x_0>c_0$. Then one observes
\begin{equation}
  \Re  \int_{z_1}^{c_0} \sqrt{R(s)} \, ds= \Re  \int_{z_1}^{x_0} \sqrt{R(s)} \, ds+  \Re \int^{c_0}_{x_0} \sqrt{R(s)}\, ds<0.
\end{equation}
This contradicts Proposition \ref{traj1}.

\medskip 

As a corollary, we obtain the following.

\begin{corollary}\label{crittraj} $ $
  \begin{itemize}
      \item [(1)] All the three equiangular critical trajectories emanating from $z_1$ terminate at $z_2$.  We denote them by left, middle and right trajectory. The left trajectory intersects the real axis to the left of $-1/w$. The middle trajectory intersects the real axis in the interval $(-1/w,0)$ and the right axis intersects the real axis to the right of $0$. 
     These three critical trajectories form two bounded simply connected domains, $\widetilde{\Omega}_1$ (containing $0$) and  $\widetilde{\Omega}_2$ (containing $-1/w$). 
      \item [(2)] From $c_0$, emanate four equiangular critical trajectories, forming two loops, one enclosing $w$ and one enclosing all the poles $-1/w,0,w$.
  \end{itemize}
\end{corollary}
\begin{figure}
    \centering
    \begin{tikzpicture}
    \node[anchor=south west,inner sep=0] at (0,0) 
    {\includegraphics[width=1\linewidth]{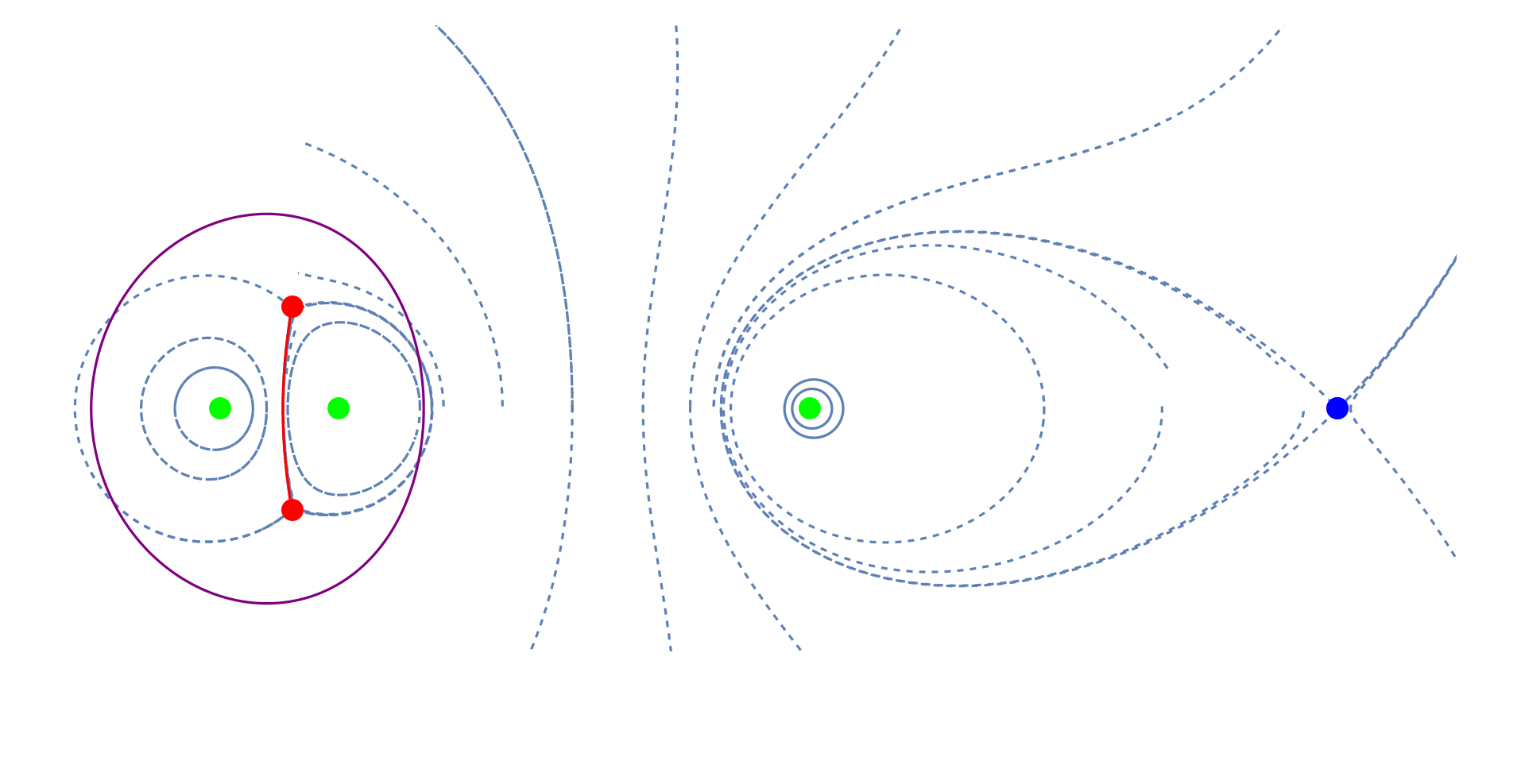}};
    \draw (2.9,2.7)  node[below]{$z_2$};
        \draw (2.9,5.3)  node[below]{$z_1$};
             \draw (2.2,3.8)  node[left]{$-1/w$};
                 \draw (3.4,3.8)  node[right]{$0$};
                                 \draw (8.5,3.8)  node[right]{$w$};
                                 \draw (12.5,3.8)  node[right]{$c_0$};
    \end{tikzpicture}
    \caption{Global structure of trajectories of the quadratic differential $(S_{1}(z)-S_2(z)^2\,dz^2$: mother body in red, droplet in purple (which is not a trajectory), poles in green, node in blue dots.}
    \label{globtraj}
\end{figure}
This finishes the discussion on the trajectories of the quadratic differential and confirms the numerical simulation in Figure \ref{globtraj}.
\begin{lemma}\label{orthtraj}
    The steepest ascent path $\Gamma_1$  of $\mathcal{U}_{0}$ from $z_1$ terminates at $c_0$ and similarly the steepest ascent path  $\Gamma_2$  of $\mathcal{U}_{0}$ from $z_2$ terminates at $c_0$. Moreover, we have $\mathcal{U}_{0}(z)>0$ for $z\in \Gamma_j$, where $j=1,2$.
    \end{lemma}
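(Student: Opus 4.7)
The plan is to identify steepest ascent paths of $\mathcal{U}_{0}$ with orthogonal trajectories of $R(z)\,dz^{2}$ oriented so that $\sqrt{R(z)}\,dz>0$; along such a trajectory $\mathcal{U}_{0}$ is strictly monotone increasing. Global information about $\mathcal{U}_{0}$ will be combined with local trajectory data at $z_{1}$ and $c_{0}$ to identify $\Gamma_{1}$.

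First, I would establish the global sign structure of $\mathcal{U}_{0}$. The three critical trajectories from $z_{1}$ to $z_{2}$ in Corollary~\ref{crittraj} are all level sets $\mathcal{U}_{0}\equiv 0$ (because $\sqrt R\,dz$ is purely imaginary on them and $\mathcal{U}_{0}(z_{1})=0$), and the two loops from $c_{0}$ are level sets $\mathcal{U}_{0}\equiv\mathcal{U}_{0}(c_{0})>0$ by Proposition~\ref{traj1}. The residues of $\sqrt R=S_{2}-S_{1}$ read off from \eqref{P1}, \eqref{P2}, \eqref{R infty asymp} give $\mathcal{U}_{0}\to +\infty$ at $w,\infty$ and $\mathcal{U}_{0}\to -\infty$ at $0,-1/w$; cell-by-cell application of the maximum principle then forces $\mathcal{U}_{0}<0$ inside $\tilde\Omega_{1}\cup\tilde\Omega_{2}$ and $\mathcal{U}_{0}\in(0,\mathcal{U}_{0}(c_{0}))$ on the annular cell between the theta-graph and the two loops.

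Next is the local analysis. At $z_{1}$, three orthogonal trajectories emerge equiangularly, one into each of the three cells meeting there; since $\mathcal{U}_{0}<0$ inside $\tilde\Omega_{1}$ and $\tilde\Omega_{2}$, the two trajectories entering these cells are descending, leaving a unique ascending trajectory $\Gamma_{1}$ that enters the annular cell. At $c_{0}$ the expansion $\sqrt{R(z)}=c\,(z-c_{0})+O((z-c_{0})^{2})$ with $c>0$ (fixed by $\sqrt R>0$ on $(c_{0},\infty)$) yields four orthogonal trajectories: two ascending along the real axis (into the inside-small-loop toward $w$ and into the outside-large-loop toward $\infty$), and two descending perpendicular to the real axis into the upper and lower parts of the annular cell, related by the symmetry $R(\overline z)=\overline{R(z)}$. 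By strict monotonicity $\Gamma_{1}$ cannot return to the theta-graph, so it either terminates at $c_{0}$ (the unique zero of $\sqrt R$ on the outer boundary of the annular cell) or transversally crosses one of the two loops and continues ascending to $w$ or $\infty$. I would rule out the latter using the conformal chart $\zeta=\int\sqrt R$ on the annular cell, which flattens it to a rectangle with the theta-graph on one vertical side and the two loops on the other (meeting at the image of $c_{0}$); the orthogonal trajectories then become horizontal segments connecting corresponding boundary points, and the segment through $z_{1}$ terminates at $c_{0}$ (symmetrically for $z_{2}$).

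Once $\Gamma_{1}$ and $\Gamma_{2}=\overline{\Gamma_{1}}$ are identified, strict monotonicity of $\mathcal{U}_{0}$ between the boundary values $0$ at $z_{j}$ and $\mathcal{U}_{0}(c_{0})>0$ at $c_{0}$ immediately yields $\mathcal{U}_{0}(z)>0$ on $\Gamma_{j}\setminus\{z_{j}\}$. The hardest step will be the conformal chart identification, i.e.\ rigorously pairing $z_{1}$ and $z_{2}$ with $c_{0}$ through the two vertical descending directions at $c_{0}$; an equivalent route is a topological count, noting that exactly two orthogonal trajectories terminate at $c_{0}$ (as a complex-conjugate pair) and tracing their possible origins through the cell decomposition forces them to originate at the conjugate zeros $z_{1},z_{2}$ rather than at the real poles $0,-1/w$.
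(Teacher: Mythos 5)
Your reduction of the lemma to a trajectory-matching problem is essentially correct, and your global sign analysis (the residues of $\sqrt{R}=S_2-S_1$ at $0,-1/w,w,\infty$, the level sets on the theta-graph and on the two loops through $c_0$, and the maximum principle cell by cell) is sound and in fact more detailed than what the paper records. The dichotomy you reach is also the right one: since orthogonal trajectories cross the (level-set) loops transversally at regular points, the steepest ascent path from $z_1$ either terminates at the critical point $c_0$ or passes through a regular point of one of the loops and continues ascending into the circle domains around $w$ or $\infty$. The gap is that neither of your two proposed ways of excluding the second alternative actually works. The region between the theta-graph and the two loops is multiply connected, so $\zeta=\int\sqrt{R}$ is not single-valued there and does not flatten it to a rectangle; even after cutting it into strip domains, the positions of the images of $z_1$ and $c_0$ on opposite vertical sides are governed by $\Im\int_{z_1}^{c_0}\sqrt{R}\,ds$, which is exactly the quantity you would need to show vanishes, so the chart argument is circular. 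The topological count fails for the same reason: the configuration in which $\Gamma_1$ escapes to $\infty$, $\Gamma_2=\overline{\Gamma_1}$ escapes with it, and the two separatrices descending from $c_0$ cross the theta-graph at a conjugate pair of regular points and continue down to the poles $0$ and $-1/w$ is perfectly consistent with the local trajectory structure and with the symmetry $R(\overline z)=\overline{R(z)}$. Which separatrices match up is a period (i.e.\ transcendental) condition, not a topological one.

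The missing ingredient is precisely the reality of $\int_{z_1}^{c_0}\sqrt{R(s)}\,ds$, and this is what the paper's proof supplies. The computation in the proof of Proposition~\ref{traj1} (see \eqref{real1}--\eqref{steas}) shows that $2\int_{z_1}^{c_0}\sqrt{R(s)}\,ds=\mathcal{U}(c_0)+\mathcal{U}(\overline{c_0})=2\mathcal{U}(c_0)$, a strictly positive real number, the key point being that $v=F_1(c_0)$ satisfies $\mathrm{deck}(v)=1/\overline{v}$. The vanishing of the imaginary part along that explicit path pins $c_0$ onto the steepest ascent path emanating from $z_1$ (and, by conjugation, onto the one from $z_2$), after which your monotonicity argument gives $\mathcal{U}_0>0$ on $\Gamma_j\setminus\{z_j\}$. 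You invoke Proposition~\ref{traj1} only for the positivity $\mathcal{U}_0(c_0)>0$; to close the proof you must also use its reality statement, which is the actual content of the paper's (very terse) argument and of \cite[Lemma~3.3, Appendix~C]{BBLM15} to which it defers.
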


\begin{proof}
The proof follows along the same lines as that of \cite[Lemma~3.3, Appendix~C]{BBLM15}. That is, if $v=F_1(z)$ satisfies $\mathrm{deck}(v)=1/\overline{v}$, then LHS of \eqref{real1} is purely real. Since $c_0$ is such a point, we have $\int _{z_1}^{c_0}\sqrt{R(s)}\, ds=\mathcal{U}(c_0)>0$. Thus, $c_0$ lies on the steepest ascent path from $z_1$.  By symmetry we can conclude the same about $\Gamma_2$. See also Figure \ref{steep} for illustration.
\end{proof}

\subsection{Proof of Theorem~\ref{thmmoth}}
\begin{definition}\label{defmoth}
     Define $\Gamma_0$ as the unique trajectory of the quadratic differential $R(z)\,dz^2<0$ emanating from $z_1$ and terminating at $z_2$ which intersects the real line between $-1/w$ and $0$ which is guaranteed by Corollary \ref{crittraj}. Further, define     
\begin{equation}\label{mothn}
   d\mu(s):=\frac{1}{2\pi i}\sqrt{R(s)}\, ds, \quad s\in\Gamma_0 .
\end{equation}
\end{definition}

We are now ready to prove Theorem~\ref{thmmoth}. 
 
\begin{proof}[Proof of Theorem~\ref{thmmoth}]
According to Definition~\ref{defmoth}, \( \mu \) is a real measure supported on \( \Gamma_0 \), with a density that vanishes as a square root at \( z_1 \) and \( z_2 \), while remaining strictly positive in the interior.

\begin{figure}
    \centering
    \includegraphics[scale=0.6]{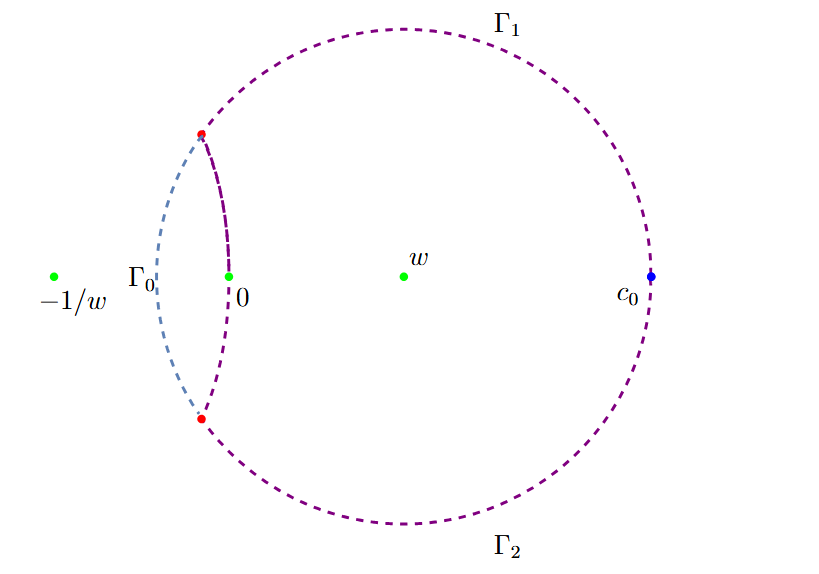}
    \caption{The contour $\Gamma=\Gamma_0\cup\Gamma_1\cup\Gamma_2$. The steepest ascent path $\Gamma_1$ and $\Gamma_2$ in dashed purple. $\Gamma_0$ in dashed blue.}
    \label{steep}
\end{figure}

Now choosing the branch cut $\mathcal{B}$ to be exactly on $\Gamma_0$ we obtain 
\begin{equation}
    \frac{1}{2\pi i}\left(S_{1,-}(z)-S_{1,+}(z)\right)dz=d\mu(z), \quad z\in \Gamma_0.
\end{equation} 
Recall the only pole of $S_1$ is at $w$  with residue $Q_1/(1+Q_0+Q_1)$ as given in \eqref{Cauchy trans and S1}. Then we obtain by the residue theorem,
\begin{equation}
    \int \frac{d\mu(s)}{z-s}=  \frac{1}{2\pi i} \int_{\Gamma_0} \frac{S_{1,-}(s)-S_{1,+}(s)}{z-s}\, ds=S_{1}(z)-\frac{Q_1}{1+Q_0+Q_1}\frac{1}{z-w}.
\end{equation}
A similar calculation shows
\begin{equation}\label{moth1}
    \begin{aligned}
        & S_{2}(z)=-\int \frac{d\mu(s)}{z-s}+\frac{1+Q_1}{1+Q_0+Q_1}\frac{1}{z}+\frac{1+Q_0}{1+Q_0+Q_1}\frac{1}{z+1/w}.
    \end{aligned}
\end{equation}
By comparing the behaviour at $\infty$ in the LHS and RHS of \eqref{moth1} using \eqref{S1be} and \eqref{S2asy}, we obtain 
\begin{equation}\label{tmass1}
    \mu(\mathbb C)=\frac{1}{1+Q_0+Q_1}.
\end{equation}
Since the density of $\mu$ which is given by $\sqrt{R(s)}ds$ is non-vanishing in the interior of $\Gamma_0$ and the total mass is positive, we then conclude $\mu$ is a positive measure.

In what follows we will denote,
\begin{equation} \label{def of Cauchy trans}
C^{\mu}(z)=\int\frac{d\mu(s)}{z-s}, \qquad z\in \mathbb C \setminus \mathrm{supp}(\mu). 
\end{equation}
Putting \eqref{moth1} and \eqref{P1} in \eqref{S2id},
we obtain
\begin{equation}\label{c1}
\bigg(C^{\mu}(z)+\frac{1}{2}\Big(\frac{Q_1}{1+Q_0+Q_1}\frac{1}{z-w}-\frac{1+Q_0}{1+Q_0+Q_1}\frac{1}{z+1/w}-\frac{Q_1+1}{1+Q_0+Q_1}\frac{1}{z}\Big)\bigg)^2=\frac{1}{4}R(z).
\end{equation}
Also, it is convenient to define the  scaling of $\mu$ and $R$. 
Thus for $\mu$ defined in \eqref{defmoth} we introduce the scaled form
\begin{equation}
    \mu_0=(1+Q_0+Q_1)\mu.
\end{equation}
Then due to \eqref{tmass1} we have $\mu_0$ to be a probability measure supported on $\Gamma_0$. That is 
\begin{equation}\label{tmass2}
    \mu_0(\mathbb C)=1.
\end{equation}
In relation to $R$ we define the scaled form
\begin{equation}\label{defR0}
    R_0(z)=(1+Q_0+Q_1)^2 R(z),
\end{equation}
and we introduce too
\begin{equation} \label{def of widetilde U U0}
    \widetilde{\mathcal{U}}(z)=(1+Q_0+Q_1)\mathcal{U}(z),\qquad \widetilde{\mathcal{U}}_0(z)=(1+Q_0+Q_1)\mathcal{U}_0(z).
\end{equation}  
This then establishes item (1) in Theorem \ref{thmmoth}, where we have also used \eqref{R infty asymp}.

\medskip 

Next, we verify item (2). For this, recall the definition $\mathcal{V}$ from definition \eqref{defV},
where the branch of the logarithm is chosen on $(-\infty,w]$. Then \eqref{c1} can be rewritten as 
\begin{equation}\label{c2}
    \Big(C^{\mu_0}(z)-\frac{1}{2}\mathcal{V}'(z)\Big)^2=\frac{1}{4}R_0(z).
\end{equation}
Recall $\Gamma_1$ and $\Gamma_2$ from Lemma \ref{orthtraj}. Now we are ready to define the contour 
\begin{equation} \label{def of Gamma 0 1 2}
\Gamma=\Gamma_0\cup\Gamma_1\cup \Gamma_2,
\end{equation}
which is a closed contour enclosing $[0,w]$ which also has $(\infty,-1/w]$ on the exterior (see Figure \ref{steep}).
From \eqref{c2} and Lemma \ref{orthtraj} integrating in $z$ and taking the real part, we obtain that there exists a real constant $\ell_0$ such that
\begin{equation}\label{moth2}
    2U^{\mu_0}(z)+\Re \mathcal{V}(z)+\ell_0\begin{cases}=0 \quad z\in \Gamma_0,\\
    >0  \quad z\in \Gamma\setminus  \Gamma_0.
    \end{cases}
\end{equation}
This establishes item (2) of Theorem \ref{thmmoth}.

\medskip 
For item (3), we first note that \( \mathcal{B} = \Gamma_0 \subset \Omega \). We do not provide a proof of this, as it is identical to that of \cite[Lemma~2.8]{BBLM15}. 
As a consequence, by combining  \eqref{def of nu}, \eqref{ctrans}, \eqref{tmass2},  and \eqref{defS1}, we obtain \eqref{moth3}. This establishes item (3) of Theorem~\ref{thmmoth}. 
\end{proof}

\begin{remark}
    It also follows from \eqref{c2} and \eqref{eqcon} that the contour $\Gamma_0$ is a \textit{S curve} (See \cite[~Lemma 5.4]{MR11}. That is we have,
    \begin{equation}
        \frac{\partial}{\partial n_+}\left( 2U^{\mu_0}(z)+\Re \mathcal{V}(z)\right)=  \frac{\partial}{\partial n_-} \left(2U^{\mu_0}(z)+\Re \mathcal{V}(z)\right),\quad z\in \Gamma_0,
    \end{equation}
    where $n_\pm$ are the unit normal vectors to $\Gamma_0$ at $z$. This \textit{S property} has appeared in several works such as \cite{KS15, R12,MR16}.
\end{remark}

\section{Contour orthogonality }\label{eqsection}
Crucial to our subsequent Riemann-Hilbert analysis is that the planar orthogonality relation with respect to the weight $e^{-N V(z)}$, with $V(z)$ given by (\ref{F10}), is equivalent to a non-Hermitian contour orthogonality with weight function (\ref{wC}).
We start by proving a general lemma that will be used to prove Proposition  \ref{contorth}.
\begin{lemma}\label{contorth0}
Let $j,k\leq N$. We have
  \begin{multline}\label{lemma1}
\int_{\mathbb C}z^{j}\overline{(z-w)}^{k}|z-w|^{2 N Q_1}  \Big( \frac{1}{1+|z|^2} \Big)^{N(1+Q_0+Q_1)+1} \,  dA(z)
         \\
         =\frac{G_{k,N}}{2i}     \oint_{\Gamma}z^j \frac{(z-w)^{NQ_1}}{(1+wz)^{N+NQ_{0}-k}} \frac{dz}{z^{k+1+NQ_{1}}},
\end{multline}
where \(\Gamma\) is a positively oriented simple closed contour that contains \([0, w]\) in its interior and \((-\infty, -1/w]\) in its exterior. Here, 
\begin{equation}\label{defbeta}
    G_{k,N}:=\frac{\Gamma(N +N Q_0-k) \Gamma(1+k+NQ_1)}{\, \Gamma(N(1+Q_0+Q_1) +1 )}.
\end{equation}
\end{lemma}

\begin{remark}
Before the proof, let us consider the special case $w=0$ of the formula \eqref{lemma1}. For $w=0$, the LHS of \eqref{lemma1} can be evaluated using polar coordinates and Euler's beta integral as  
\begin{align*}
\int_{\mathbb C}   \frac{z^{j}\overline{z}^{k}|z|^{2 N Q_1} }{( 1+|z|^2 )^{ N(1+Q_0+Q_1)+1 } }   \,  dA(z) &=  2 \pi \int_0^\infty \frac{ r^{2k+2NQ_1+1} }{ (1+r^2)^{ N(1+Q_0+Q_1)+1  } }\,dr \,  \delta_{j,k} =  2\pi i \, G_{k,N}\,  \delta_{j,k}. 
\end{align*}
On the other hand, the integral in the RHS of \eqref{lemma1} is evaluated as 
\begin{align*}
  \oint_{\Gamma} z^{j-k-1} \,dz = 2\pi i\,  \delta_{j,k}. 
\end{align*}
Thus one can directly observe that \eqref{lemma1} holds for $w=0$. 
\end{remark}

\begin{proof}[Proof of Lemma~\ref{contorth0}]
To shorten our notations we set
\begin{equation}\label{short}
\alpha=NQ_1,\quad \beta=N(1+Q_0+Q_1).\end{equation}
Then one  observes 
\begin{equation} \label{parcond}
	\alpha > 0, \qquad \beta >\alpha + N. 
\end{equation} 
We first consider the function 
\begin{equation} \label{fdef} g(z) = z^{j} (z-w)^\alpha
	\int_w^{\bar{z}} \frac{(s-w)^{\alpha + k}}{(1+zs)^{\beta + 1}} \, ds.  
\end{equation}
Since $\alpha$ and $\beta$ are not assumed to be integers, we will use principal branches for non-integers powers.
Thus $(z-w)^{\alpha}$ has the branch cut for $z \in (-\infty, w]$, $(s-w)^{\alpha +k}$ has the branch cut for $s \in (-\infty, w]$,  and given $z \in \mathbb C \setminus \{0\}$, $(1+zs)^{\beta+1}$ has the branch cut for $s \in [-1/z, -1/z \times \infty)$. 
Then it is immediate that \eqref{fdef} is defined on $\mathbb C \setminus (-\infty, w]$
and is $C^{\infty}$ there.

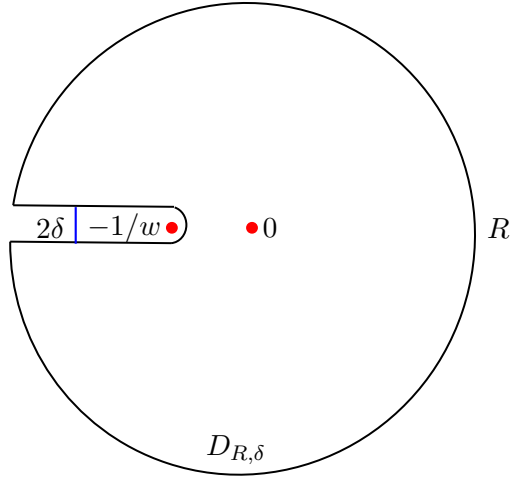
\begin{figure}
    \centering
   \tikzset{every picture/.style={line width=0.75pt}} 
\begin{tikzpicture}[x=0.75pt,y=0.75pt,scale=0.8]

\draw  [draw opacity=0] (179.12,196.3) .. controls (178.81,134.45) and (216.66,77.2) .. (277.03,57.04) .. controls (352.75,31.75) and (435.22,74.38) .. (461.23,152.26) .. controls (487.25,230.14) and (446.95,313.77) .. (371.23,339.07) .. controls (295.51,364.36) and (213.04,321.73) .. (187.02,243.85) .. controls (184.29,235.66) and (182.29,227.4) .. (180.98,219.16) -- (324.13,198.05) -- cycle ; \draw   (179.12,196.3) .. controls (178.81,134.45) and (216.66,77.2) .. (277.03,57.04) .. controls (352.75,31.75) and (435.22,74.38) .. (461.23,152.26) .. controls (487.25,230.14) and (446.95,313.77) .. (371.23,339.07) .. controls (295.51,364.36) and (213.04,321.73) .. (187.02,243.85) .. controls (184.29,235.66) and (182.29,227.4) .. (180.98,219.16) ;  
\draw    (179.12,196.3) -- (279.51,195.33) ;
\draw    (180.98,219.16) -- (281.37,218.19) ;
\draw  [draw opacity=0][line width=3] [line join = round][line cap = round] (317.33,214.27) .. controls (320.34,218.78) and (326.08,228.27) .. (331.33,228.27) ;
\draw  [draw opacity=0] (281.78,217.8) .. controls (285.99,216.37) and (289.06,211.98) .. (289.06,206.8) .. controls (289.06,200.72) and (284.85,195.74) .. (279.51,195.33) -- (278.79,206.8) -- cycle ; \draw   (281.78,217.8) .. controls (285.99,216.37) and (289.06,211.98) .. (289.06,206.8) .. controls (289.06,200.72) and (284.85,195.74) .. (279.51,195.33) ;

 \filldraw[red] (280,205) circle (2.3 pt)  node[left,black]{$-1/w$};
 
 \filldraw[red] (330,205) circle (2.3 pt)  node[right,black]{$0$};
   \draw (320,50)  node[above]{$ D_{R,\delta}$}; 
\filldraw[red] (470,205)  node[right,black]{$R$};
 \draw[-,blue, thick] (220,195) --++ (90:23);
 \draw[black](220,205) node[ below,left] {$2\delta$};

\end{tikzpicture}
 
    \caption{The domain where Stokes theorem is applied in Lemma \ref{contorth0}}.
    \label{contorthfig}
\end{figure}

Our first observation is that, $g$ has a continuous extension in $\mathbb C\setminus (-\infty,-1/w]$.

Let $z \to x \in (-1/w, w]$ with $\Im z > 0$. Then $(z-w)^{\alpha} \to |x-w|^{\alpha} e^{i\pi \alpha}$. The integral in \eqref{fdef} tends to an integral over the interval $[x,w]$. 
Since $1+xs >0$ for $-1/w < x < s < w$, the denominator of the integrand tends to $(1+xs)^{\beta+1}$. On the other hand, the numerator tends to $(-1)^k |s-w|^{\alpha + k} e^{-i\pi \alpha}$, since  $\bar{z} \to x$ from the lower half plane.
Thus, since $k$ is an integer,
\[ g_+(x) = (-1)^{k+1} x^j  |x-w|^{\alpha} \int_x^w \frac{|s-w|^{\alpha + k}}{(1+xs)^{\beta + 1}} \, ds.
\]
With the same arguments, we find exactly the same expression for $g_-(x)$.
Thus $g$ has continuous extension
to $\mathbb C \setminus (-\infty, -1/w]$. The extension is in fact also
$C^{\infty}$ and from \eqref{fdef}
\begin{equation} \label{fzbar} 
\frac{\partial g}{\partial \bar{z}} = z^j (\bar{z}-w)^k \frac{|z-w|^{2\alpha}}{(1+|z|^2)^{\beta + 1}}.	 
\end{equation}

Let $R > 0$ be large and $\delta > 0$ small. We let $D_{R,\delta}$ be the part of the disk of radius $R$ around the origin that is at distance  $> \delta$ from $(-\infty, -1/w]$ (see Figure \ref{contorthfig}).
By Stokes theorem applied to \eqref{fdef} we then have by \eqref{fzbar}
\begin{equation}
\label{PnStokes}
\begin{aligned}
\int_{D_{R,\delta}}  z^j (\overline{z}-w)^k \frac{|z-w|^{2\alpha}}{(1+|z|^2)^{\beta+1}} \, dA(z)
&=\int_{D_{R,\delta}} \frac{\partial g}{\partial \bar{z}} \, dA(z)=\frac{1}{2i}\oint_{\partial D_{R,\delta}}g(z) \, dz
\\
& =\frac{1}{2i}\oint_{\partial D_{R,\delta}} z^j (z-w)^{\alpha} \int_w^{\overline{z}} \frac{(s-w)^{\alpha + k}}{(1+zs)^{\beta+1}} \, ds \, dz.
\end{aligned}\end{equation}
We write  the $s$-integral in the right hand side of \eqref{PnStokes} as
\begin{align} \label{split}
\int_w^{\overline{z}}
\frac{(s-w)^{\alpha + k}}{(1+zs)^{\beta+1}} \, ds 
= 
\int_w^{\overline{z} \times \infty}
\frac{(s-w)^{\alpha + k}}{(1+zs)^{\beta+1}} \, ds
- 
\int_{\overline{z}}^{\overline{z} \times \infty}
\frac{(s-w)^{\alpha + k}}{(1+zs)^{\beta+1}} \, ds . 
\end{align}
Both integrals converge for $z \in \partial D_{R,\delta}$ because
of the conditions \eqref{parcond}.

We split the integral of \eqref{PnStokes} accordingly.
We study the two terms
\begin{align} \label{term1} \frac{1}{2i}
\oint_{\partial D_{R,\delta}}
z^j (z-w)^{\alpha}
\int_w^{\overline{z} \times \infty}
\frac{(s-w)^{\alpha + k}}{(1+zs)^{\beta+1}} \, ds \, dz, 
\\
\label{term2} \frac{1}{2 i}
\oint_{\partial D_{R,\delta}}
z^j (z-w)^{\alpha}
\int_{\bar{z}}^{\bar{z} \times \infty}
\frac{(s-w)^{\alpha + k}}{(1+zs)^{\beta+1}} \, ds \, dz,
\end{align}
separately.
Let us first deal with the second term \eqref{term2} and  show that it vanishes in the limit $\delta \to 0+$
and $R \to +\infty$.

To deal with the limit $\delta \to 0+$, we compute the limiting values
of the integrand as $z \to x \in (-\infty,0)$. 
Let $x < 0$ and put  
\begin{equation} \label{hdef} h(z) =z^j 
	(z-w)^{\alpha}
	\int_{\overline{z}}^{\overline{z}\times \infty}
	\frac{(s-w)^{\alpha + k}}{(1+zs)^{\beta+1}} \, ds. \end{equation}
If we let $z \to x$
	with $\Im z > 0$, then
$ (z-w)^{\alpha} \to |x-w|^{\alpha} e^{\pi i\alpha}$,
and, since $\bar{z}$ tends to $x$ from below, the integral in \eqref{hdef} tends to
\[ 
\int_x^{-\infty} \frac{(s-w)^{\alpha+k}_-}{(1+xs)^{\beta + 1}} \, ds
=  -\int_{-\infty}^x
		\frac{|s-w|^{\alpha+k} e^{-\pi i (\alpha + k)}}{(1+xs)^{\beta+1}} \, ds.
\] 
Observe that $1+xs > 1$
for $-\infty < s < x < 0$
and we do not encounter any branching in the denominator.
The result is that
\[ h_+(x) = (-1)^{k+1}x^j |x-w|^{\alpha} \int_{-\infty}^x 	\frac{|s-w|^{\alpha+k}}{(1+xs)^{\beta+1}} \,  ds.  \]
We find the same result for $h_-(x)$,
and $h$ has continuous extension across $(-\infty,0)$. 

Thus, in the limit $\delta \to 0+$ the second term \eqref{term2} reduces to an integral over the circle of radius $R$ 
\begin{equation} \label{term2reduced} \frac{1}{2 i} \oint_{|z| = R}
	z^j (z-w)^{\alpha}
	\int_{\overline{z}}^{\overline{z}\times \infty}
	\frac{(s-w)^{\alpha + k}}{(1+zs)^{\beta+1}} \, ds \, dz
\end{equation}
whose absolute value is bounded by
\begin{equation} \label{term2est1}
	\pi R \sup_{|z| = R}
	\bigg|	z^j (z-w)^{\alpha}
	\int_{\overline{z}}^{\overline{z}\times \infty}
	\frac{(s-w)^{\alpha + k}}{(1+zs)^{\beta+1}} \, ds \bigg|  \leq C R^{N+\alpha + 1}
	\sup_{|z| = R}
	\bigg|
	\int_{\overline{z}}^{\overline{z}\times \infty}
	\frac{(s-w)^{\alpha + k}}{(1+zs)^{\beta+1}} \, ds \bigg|,
	\end{equation}
for some constant $C > 0$.
We have, with the substitution $s =\bar{z}/u$,
\[ \int_{\overline{z}}^{\overline{z}\times \infty}
\frac{(s-w)^{\alpha + k}}{(1+zs)^{\beta+1}} \, ds
= \bar{z} \int_0^1 
	 \frac{(\bar{z}-wu)^{\alpha + k}}{(u + |z|^2)^{\beta + 1}}
	u^{\beta-1-\alpha - k} \, du. \]
Since $\beta - 1 -\alpha - k \geq 0$
the integrand remains bounded for $u$
in the interval $[0,1]$, and
we readily obtain
\begin{equation} \label{term2est2} \sup_{|z| = R} \bigg| \int_{\overline{z}}^{\overline{z}\times \infty}
\frac{(s-w)^{\alpha + k}}{(1+zs)^{\beta+1}} \, ds \bigg|
	= \mathcal{O}(R^{-2\beta + \alpha + k - 1}) 
\end{equation}
as $R \to \infty$.
Combining \eqref{term2est1} and \eqref{term2est2}  we find that
\eqref{term2reduced} is $\mathcal{O}(R^{-2\beta +2 \alpha + N +k})$ as $R \to \infty$. Due to \eqref{parcond} and $k \leq N-1$, we have $-2\beta +2 \alpha + N +k \leq -1$, and we conclude that \eqref{term2reduced} vanishes in the large $R$ limit. The outcome  is therefore
\begin{equation} \label{term2limit}
	\lim_{R \to \infty} \lim_{\delta \to 0+}
 \frac{1}{2 i}
\oint_{\partial D_{R,\delta}}
z^j (z-w)^{\alpha}
\int_{\bar{z}}^{\bar{z} \times \infty}
\frac{(s-w)^{\alpha + k}}{(1+zs)^{\beta+1}} \, ds \, dz = 0.
\end{equation}

The first term \eqref{term1} has an
explicit evaluation for fixed $R$ and $\delta$. Consider ths $s$ integral in \eqref{term1} and change variables $s\mapsto s+w$
\begin{multline*}
\int_w^{\overline{z} \times \infty}
\frac{(s-w)^{\alpha + k}}{(1+zs)^{\beta+1}} \, ds 
=
\int_0^{\overline{z} \times \infty}
\frac{s^{\alpha + k}}{(1+z(s+w))^{\beta+1}} \, ds \\
= \frac{1}{(1+wz)^{\beta +1}}
\int_0^{\overline{z} \times \infty}
 s^{\alpha + k} 
 \Big(1+ \frac{zs}{1+wz}\Big)^{-\beta-1} \, ds.
\end{multline*}
Then with $s= x \bar{z}$ this is
\begin{align*}
 \frac{\bar{z}^{\alpha + k + 1}}{(1+wz)^{\beta +1}}
	\int_0^{\overline{z} \times \infty}
	x^{\alpha + k} 
	\Big(1+ \frac{|z|^2}{1+wz} x\Big)^{-\beta-1}\, dx. 
\end{align*}
Another change of variables 
$x = \frac{1+wz}{|z|^2} t$ leads to
\begin{equation}
	\frac{\bar{z}^{\alpha + k + 1}}{(1+wz)^{\beta +1}}
	\Big(\frac{1+wz}{|z|^2}\Big)^{\alpha + k + 1}
	\int_0^{\infty} \frac{t^{\alpha+k}}{(1+t)^{\beta+1}} \, dt= \frac{G_{\alpha+k,\beta}}{z^{\alpha + k+1}(1+wz)^{\beta - \alpha -k} },
\end{equation}
with constant 
$$
G_{\alpha+k,\beta} = \int_0^{\infty} \frac{t^{\alpha+k}}{(1+t)^{\beta+1}} \, dt= B(\alpha+k+1,\beta-\alpha-k) .
$$
The constant can be evaluated using the Euler's beta integral (cf. \cite[3.194,(3)]{GR1}). 

The first term \eqref{term1} is thus equal to
\[ \frac{G_{\alpha+k,\beta}}{2 i} \oint_{\partial D_{R,\delta}}
z^j \frac{(z-w)^{\alpha}}{z^{\alpha + k+1}(1+wz)^{\beta - \alpha -k} } \, dz. \]
Observe that $\frac{(z-w)^{\alpha}}{z^{\alpha+k+1}}$
has a branch cut on $[0,w]$ only, and
the integrand is analytic in
$\mathbb C \setminus ((-\infty,-1/w] \cup [0,w])$. By analyticity
$\partial D_{R,\delta}$, can then be deformed
to any contour $\Gamma$ going around
$[0,w]$ once in the positive direction,
and not intersecting $(-\infty, -1/w]$. We can take $\Gamma$ independent of $R$ and $\delta$, and we find
\begin{multline} \label{term1eval}
	\frac{1}{2\pi i}
	\oint_{\partial D_{R,\delta}}
	z^j (z-w)^{\alpha}
	\int_w^{\overline{z} \times \infty}
	\frac{(s-w)^{\alpha + k}}{(1+zs)^{\beta+1}} \, ds \, dz
	\\ =
	\frac{G_{\alpha+k,\beta}}{2 i}
\oint_{\Gamma}
z^j  \frac{(z-w)^{\alpha}}
{z^{\alpha + k+1}(1+wz)^{\beta - \alpha -k} } \, dz. 
\end{multline}

Then combining everything, we have
\begin{align*}  & \quad \int_{\mathbb C} 
	z^j (\overline{z}-w)^k
	\frac{|z-w|^{2\alpha}}{(1+|z|^2)^{\beta+1}} \, dA(z) \\
	&  = 
	\lim_{R \to +\infty} \lim_{\delta \to 0+}   \int_{D_{R,\delta}} 
	z^j (\overline{z}-w)^k
	\frac{|z-w|^{2\alpha}}{(1+|z|^2)^{\beta+1}} \, dA(z) \\
	& =
	\lim_{R \to +\infty} \lim_{\delta \to 0+}   G_{\alpha+k,\beta} \oint_{\partial D_{R,\delta}} 
	z^j (z-w)^\alpha
	\int_w^{\bar{z}} 
	\frac{(s-w)^{\alpha+k}}{(1+zs)^{\beta+1}} \, ds \, dz.\\&
    =\frac{G_{\alpha+k,\beta}}{2 i}
\oint_{\Gamma}
z^j  \frac{(z-w)^{\alpha}}
{z^{\alpha + k+1}(1+wz)^{\beta - \alpha -k} } \, dz.
\end{align*}
Replacing $\alpha, \beta$ in terms of $N,Q_1, Q_0$ as given in \eqref{short} and rewriting $G_{\alpha+k,\beta}=G_{k,N}$ and  gives \eqref{lemma1}.
\end{proof}

With  Lemma \ref{contorth0} we are now ready to show that the planar orthogonal polynomials also satisfy non-Hermitian contour orthogonality.

\begin{proof}[Proof of Proposition \ref{contorth}]
If \eqref{eq9} holds true then from Lemma \ref{contorth0} along with the fact $G_{k,N}\neq 0$, we have for a contour $\Gamma$ as in Lemma  \ref{contorth0},
 \begin{equation}\label{north}
    \oint_{\Gamma }P_{n,N}(z)\frac{(z-w)^{NQ_1}}{(1+wz)^{N+NQ_{0}-k} }\frac{ dz}{z^{k+1+NQ_{1}}}=0,
\end{equation}
for $k=0,\dots n-1.$ 
Then \eqref{north} can be rewritten as 
 \begin{equation}\label{north2}
    \oint_{\Gamma }P_{n,N}(z)(1+wz)^{k}z^{n-k-1}\frac{(z-w)^{NQ_1}}{(1+wz)^{N+NQ_{0}}}\frac{dz}{z^{n+NQ_{1}}}\, =0,
\end{equation}
for $k=0,\dots n-1.$ 

The family of polynomials $k\mapsto(1+wz)^{k}z^{n-k-1}$ $k=0,1,\dots,n-1$, are a basis of the vector space of polynomials of degree $\leq n-1$. Thus, by taking appropriate linear combination we obtain \eqref{nonh}. 
\end{proof}

\section{Riemann-Hilbert analysis}\label{RHsection}

By Proposition \ref{contorth} the planar orthogonal polynomials $P_{n,N}$ satisfy non-Hermitian contour orthogonality, and consequently by the renowned work of \cite{FIK92} $P_{n,N}$ is a solution to the Riemann-Hilbert (RH) problem described below.

The RH problem is for the orthogonality contour $\Gamma$ (now fixed as given in Theorem \ref{thmmoth}). 
Namely, $\Gamma = \Gamma_0 \cup \Gamma_1 \cup \Gamma_2$, where $\Gamma_0$ is defined in Definition~\ref{defmoth}, and $\Gamma_1$ and $\Gamma_2$ are defined in Lemma~\ref{orthtraj} with coubterclockwise orientation.

\begin{rhproblem} \label{rhproblemY} $Y$ is the solution of the following RH problem.

\begin{itemize}
\item $Y:\mathbb C\setminus\Gamma\rightarrow \mathbb C^{2\times 2}$ where $\Gamma$ is analytic.
    \item On $\Gamma $ we have 
    $$
    Y_+(z)=Y_-(z)\begin{pmatrix}
        1&w_{n,N}(z)\\
        0&1
    \end{pmatrix},
    $$
    where $Y_\pm$ denotes the boundary values of $Y$ on $\Gamma$. Here, $w_{n,N}$ is given by \eqref{wC}.
    \item As $z\rightarrow \infty$ we have \begin{equation}
        Y(z)=\left(I_2+\mathcal{O}(z^{-1})\right)\begin{pmatrix}
            z^{n}&0\\
            0& z^{-n} 
        \end{pmatrix}.
    \end{equation}
\end{itemize}

\end{rhproblem}
Then we have $P_{n,N}(z)=Y_{1,1}(z)$.

We also denote the squared norm associated with the non-Hermitian contour orthogonality by \begin{equation}\label{RHnorm} 
    \oint_{\Gamma}  P_{n,N}^2(z)  \frac{(z-w)^{NQ_1}}{(z+1/w)^{N+NQ_0}}\frac{dz}{z^{n+NQ_1}}=  \widehat{h}_{n,N}. 
\end{equation} 
Then it is well known that (see for example \cite{D00})
\begin{equation}\label{rhnorm}
    \widehat{h}_{n,N}=-2\pi i\lim_{z\rightarrow \infty}z^{n+1}Y_{1,2}(z).
\end{equation}

\subsection{Some preliminaries}
At this point, one should observe from \eqref{defV} and \eqref{wC} that
\begin{equation} \label{w nN mathcal V}
        w_{n,N}(z)=\frac{1}{z^{r_0}}e^{-N \mathcal{V}(z)},
\end{equation}
where we also recall from the statement of Theorem~\ref{thm1} that $r_0=n-N$.

\begin{definition}\label{defphi}
We define 
  \begin{equation}
    \varphi(z) : =\int_{z_1}^{z}\sqrt{R_0(s)}\, ds,
\end{equation}
  where the contour of integration lies in $\mathbb C \setminus \left(\Gamma_0\cup\Gamma_1\cup[c_0,\infty)\right)$. Then we also have from \eqref{defU0}, \eqref{defR0} and \eqref{def of widetilde U U0} that $\widetilde{\mathcal{U}}_{0}(z)=\Re \varphi(z)$.
\end{definition}

\begin{definition}
We define the $g$ function as 
    \begin{equation}\label{defg}
        g(z):= \int \log(z-s) \, d\mu_0(s), 
    \end{equation}
    where the branch cut of the logarithm is taken in $\Gamma_0\cup\Gamma_1\cup[c_0,\infty)$.
\end{definition}

Notice that from \eqref{defg} and \eqref{tmass2}, we have 
\begin{equation}\label{gsymp}
    g(z)=\log z +\mathcal{O}(z^{-1}) \quad z\rightarrow \infty.
\end{equation}
On the other hand, by \eqref{def of U mu and I mu} and \eqref{def of Cauchy trans}, we have 
$$\Re g(z)= - U^{ \mu_0 }(z), \qquad g'(z)=C^{ \mu_0 }(z).$$ 
Then it follows from \eqref{c2} that 
$$
2g'(z) = \mathcal{V}'(z) - \sqrt{R_0(z)}. 
$$
Then by \eqref{eqcon}, we obtain
\begin{equation}\label{gphi}
    2 g(z)=\mathcal{V}(z)-\varphi(z)+\ell_0.
\end{equation}
Recall that $\mathcal{U}_0$ is defined by \eqref{defU0}. 
Consequently from item (2) of Theorem \ref{thmmoth}
\begin{align}
\begin{split}\label{jcon}
w_{n,N}(z)e^{N\left(g_+(z)+g_{-}(z)-\ell_0\right)}
&= z^{-r_0}  e^{-N/2\left(\varphi_{+}(z)+\varphi_{-}(z)\right)}
= \begin{cases}
    z^{-r_0}, & z\in \Gamma_0,\\
   z^{-r_0} e^{-N \mathcal{U}_0(z)}, & z\in \Gamma \setminus \Gamma_0.
\end{cases}
\end{split}
\end{align}

\subsection{\texorpdfstring{Transformation with the $g$ function}{}}

Our first transformation involves the $g$ function defined in \eqref{defg}. This normalizes the behaviour at infinity due to \eqref{gsymp} and makes the $(1,2)$ entry of the jump on the contour $\Gamma_0$ to be $z^{-r_0}$ due to \eqref{jcon} while exponentially small on the rest of $\Gamma$. 
 
\begin{definition}
    We define 
\begin{equation}
    X(z):=e^{-\frac{N}{2} \ell_0\sigma_3}Y(z)e^{-N\left(g(z)-\frac{\ell_0}{2}\right)\sigma_3}.
\end{equation}
Here $\sigma_3=\begin{pmatrix}
    1&0\\
    0&-1
\end{pmatrix}$, is the third Pauli matrix. 
\end{definition}
Then it follows from the RH problem~\ref{rhproblemY}, \eqref{jcon}, and \eqref{gsymp} that we have the following RH problem for $X$.

\begin{rhproblem}\label{gfunction}
 $X$ is the solution to the following RH problem.  
 \begin{itemize}
\item $X:\mathbb C\setminus\Gamma\rightarrow \mathbb C^{2\times 2}$ is analytic.

    \item $X_+=X_-J_X$ on $\Gamma$ with 
    \begin{equation}
    J_X :=    \begin{cases}
    \begin{pmatrix}
        e^{-N\left(g_-(z)-g_{+}(z)\right)}& z^{-r_0}\\
        0&e^{N\left(g_-(z)-g_{+}(z)\right)}\end{pmatrix},  \qquad & \mathrm{on} \hspace{0.2cm} \Gamma_0,
    \smallskip 
    \\
    \begin{pmatrix}
        1&z^{-r_0}e^{-N \mathcal{U}_{0}(z)}\\
        0&1
    \end{pmatrix}, \qquad & \mathrm{on} \hspace{0.2cm}\Gamma \setminus\Gamma_0. 
    \end{cases}
    \end{equation}
    \item As $z\rightarrow \infty$ we have \begin{equation}
       X(z)=\left(I_2+\mathcal{O}(z^{-1})\right)\begin{pmatrix}
           z^{r_0}&0\\
           0& z^{-r_0}
       \end{pmatrix}.
        \end{equation}
\end{itemize}

\end{rhproblem}

\subsection{Opening the lenses}

The RH Problem \ref{gfunction} has oscillatory diagonal entries on the cut $\Gamma_0$. We open two lenses $L_+, L_-$ on either side of $\Gamma_0$. The condition on the lens is that we must have $\Re \varphi(z)<0$ in the interior and the boundary  of the lenses. This can be done in a neighborhood of $\Gamma_0$, as the measure $\mu_0$ has a strictly positive density on the interior of $\Gamma_0$. 
\begin{definition}
    We define
\begin{equation}
    T(z):= X(z) \begin{cases}
        I_2, & z\in \mathbb C \setminus L_\pm,
        \smallskip 
        \\
        \begin{pmatrix}
            1&0\\
           \mp z^{r_0}e^{N\varphi(z)}&1
        \end{pmatrix}, & z\in L_{\pm}.\\     
    \end{cases}
\end{equation}
\end{definition}

\begin{rhproblem}\label{RHPT} $T$ satisfies the following RH problem. 
     \begin{itemize}
\item $T:\mathbb C\setminus\Sigma_T\rightarrow \mathbb C^{2\times 2}$ is analytic where $\Sigma_T=\Gamma\cup \partial L_\pm$.
    \item $T_+=T_-J_T$ on $\Sigma_T$ with 
    \begin{equation} \label{def of JT}
    J_T:= \begin{cases}
    \begin{pmatrix}
       0&z^{-r_0}\\
        -z^{r_0}&0
    \end{pmatrix},  \qquad & \mathrm{on} \hspace{0.2cm} \Gamma_0,
    \smallskip 
    \\
    \begin{pmatrix}
        1&z^{-r_0}e^{-N \mathcal{U}_{0}(z)}\\
        0&1
    \end{pmatrix}, \qquad &  \mathrm{on} \hspace{0.2cm}\Gamma \setminus\Gamma_0, 
    \smallskip 
    \\
    \begin{pmatrix}
            1&0\\
           z^{r_0} e^{N\varphi(z)}&1 
        \end{pmatrix}, \qquad  &  \mathrm{on} \hspace{0.2cm}  \partial 
        L_\pm.
    \end{cases}
    \end{equation}  
    \item As $z\rightarrow \infty$, we have \begin{equation}
       T(z)=\left(I_2+\mathcal{O}(z^{-1})\right)\begin{pmatrix}
           z^{r_0}&0\\
           0& z^{-r_0}
       \end{pmatrix}.
    \end{equation}
\end{itemize}
\end{rhproblem}

Since $r_0$ is fixed, observe that away from $z_1$ and $z_2$, the jumps on $\Gamma \setminus \Gamma_0$ and $\partial L_\pm$ converge to the identity matrix exponentially fast as $N \to \infty$.
On the other hand, on $\Gamma_0$ it has simple jumps. This motivates the construction of the global matrix, which approximates $T(z)$ away from the branch points.

\subsection{Global parametrix}

\begin{rhproblem}\label{RHM} We seek a matrix $M$ that satisfies the following conditions.

    \begin{itemize}
\item $M:\mathbb C\setminus\Gamma\rightarrow \mathbb C^{2\times 2}$ is analytic. 
  \item On $\Gamma_0 $ we have $M_+(z)=M_-(z)J_M$, with 
  \begin{equation}\label{JM}
      J_M:=\begin{pmatrix}
       0&z^{-r_0}\\
        -z^{r_0}&0
    \end{pmatrix}.
  \end{equation}
    \item As $z\rightarrow x^*$, with $x^{*}  \in  \{z_1,z_2\}$, we have 
    $$M(z)=\mathcal{O}((z-x^*)^{1/4}).$$
     \item As $z\rightarrow \infty$ we have \begin{equation}\label{Mexp}
       M(z)=\left(I_2+\mathcal{O}(z^{-1})\right)\begin{pmatrix}
           z^{r_0}&0\\
           0& z^{-r_0}
       \end{pmatrix}.
    \end{equation}
\end{itemize}
\end{rhproblem}

\begin{lemma}\label{Dlemma}
Define
\begin{equation}\label{defD}
         D(z):=\sqrt{\frac{\rho}{a}}\frac{1}{F_1(z)} ,
\end{equation}
 where $F_1$, one of the inverses of the map $f$, is given by \eqref{defF1}.      
Then it satisfies the following.
    \begin{itemize}
        \item[(1)] $D(z)$ is analytic and nonzero in $\mathbb C \setminus\Gamma_0$;
        \item [(2)] $D_{+}(z)D_{-}(z)=z $ if  $z\in \Gamma_0$;
        \item [(3)] $D(z) 
        =D_{\infty}z+\mathcal{O}(1)$ as $ z\rightarrow\infty $, where $D_{\infty}:=\frac{1}{\sqrt{\rho a}}. $
    \end{itemize}
    
    \end{lemma}

\begin{proof}
    Item (1) is immediate from definition  \eqref{defD}. Item (3) follows from \eqref{invasy}. 
We prove item (2). By direct computation using \eqref{confm} and \eqref{deck rational}, we have 
\begin{equation}
    f(u)=\frac{\rho}{a}\frac{1}{  \mathrm{deck}(u)u}.
\end{equation}
Notice that by \eqref{F12 deck}, we have  
$$F_1(z_{\pm})=F_2(z_\mp)=\mathrm{deck}(F_1(z_\mp)), \quad z\in \Gamma_0.$$
Thus we obtain 
  \begin{equation}
      D_{+}(z)D_{-}(z)=\frac{\rho}{a}\frac{1}{F_1(z)}\frac{1}{\mathrm{deck}(F_1(z))}=z, \quad z\in \Gamma_0.
  \end{equation}
\end{proof}

Next, we consider  
\begin{equation}\label{newrhp}
    M(z)=D_{\infty}^{-r_0\sigma_3}N(z)D(z)^{r_0\sigma_3},
\end{equation}
and the Riemann-Hilbert problem associated with $N(z)$.

\begin{rhproblem}\label{RHN} Following RH problem~\ref{RHM} and \eqref{newrhp}, we seek a matrix $N$ that satisfies the following conditions.
    \begin{itemize}
\item $N:\mathbb C\setminus\Gamma\rightarrow \mathbb C^{2\times 2}$ is analytic.
  \item On $\Gamma_0 $ we have $N_+(z)=N_-(z)J_N$, with 
  \begin{equation}\label{JN}
      J_N:=\begin{pmatrix}
       0&1\\
        -1&0
    \end{pmatrix}.
  \end{equation}
    \item As $z\rightarrow x^*$, with $x^{*} \in \{z_1,z_2\}$, we have 
    $$N(z)=\mathcal{O}((z-x^*)^{1/4}).$$
     \item As $z\rightarrow \infty$ we have \begin{equation}\label{Nasy1}
       N(z)=I_2+\mathcal{O}(z^{-1}).
    \end{equation}
\end{itemize}
\end{rhproblem}
Observe that in the case $r_0=0$ the RH problem \ref{RHN} is same as the RH problem \ref{RHM}, that is $N(z)=M(z)$.
We construct the solution of RH Problem \ref{RHN} by means of the conformal map $F_1$ defined in \eqref{defF1}.
Denote 
$$
\widetilde{\Gamma}_{\pm}:= F_1(\Gamma_{0\pm})=F_2(\Gamma_{0\mp}).
$$
We are looking for a solution of the form 
\begin{equation} \label{def of N ansatz} 
    N(z)=\begin{pmatrix}
        N_{1}(F_1(z))&N_{1}(F_2(z))\\
        N_{2}(F_1(z))&N_{1}(F_2(z))
    \end{pmatrix}, 
\end{equation}
where $N_1$ and $N_2$ are two scalar-valued functions on $\mathbb C\setminus \widetilde{\Gamma}_{\pm}$.

In order to satisfy the jump condition \eqref{JN} we must have 
\begin{equation}
N_{k_+}(u) = \begin{cases}
-N_{k_-}(u), & u \in \widetilde{\Gamma}_+, 
\\
N_{k_-}(u), &  u \in \widetilde{\Gamma}_-. 
\end{cases}
\end{equation}
Hence $N_k$ is analytic across $\widetilde{\Gamma}_-$.
We recall from \eqref{invasy} that as $z\rightarrow \infty$
\begin{equation}
    F_{1}(z)=\frac{\rho}{z}+\mathcal{O}(z^{-2}),\qquad   F_{2}(z)=\frac{1}{a}+\mathcal{O}(z^{-1}),
\end{equation}
and a direct computation shows in \eqref{confm}
\begin{equation}
    \frac{1}{\sqrt{f'(z)}}=(z-1/a)\frac{i}{\sqrt{a(b-a)\rho}}+\mathcal{O}((z-1/a)^2),\quad z\rightarrow 1/a,
\end{equation}
\begin{equation}
    \frac{1}{\sqrt{f'(z)}}=iz\frac{1}{\sqrt{\rho}}+\mathcal{O}(z),\quad z\rightarrow 0.
\end{equation}
With this in mind, we set 
\begin{equation}\label{gpar2}
N_{1}(u):=a_1\frac{1}{\sqrt{f'(u)}}\frac{1}{u}, \qquad 
       N_{2}(u):=a_2\frac{1}{\sqrt{f'(u)}}\frac{1}{u-1/a}, 
\end{equation}
where 
\begin{equation}\label{gpar3}
a_1=i\sqrt{\rho}, \qquad a_2=-i\sqrt{\frac{a}{b-a }\frac{1}{\rho }},
\end{equation}
and  the branch of the square root of $\sqrt{f'(u)}$ is taken on $\widetilde{\Gamma}_+$.

Since $f(F_{j}(z))=z$ $(j=1,2)$ we have the chain rule 
$$ \frac{1}{\sqrt{f'(F_{j}(z))}}=\sqrt{F_{j}'(z)}.$$ 
Then by \eqref{def of N ansatz} and \eqref{gpar2}, we have
\begin{equation}\label{gpar}
    N(z)=\begin{pmatrix}
        a_1\frac{\sqrt{F_{1}'(z)}}{F_1(z)}& a_1\frac{\sqrt{F_{2}'(z)}}{F_2(z)}\\
        a_2\frac{\sqrt{F_{1}'(z)}}{F_1(z)-1/a}&a_2\frac{\sqrt{F_{2}'(z)}}{F_2(z)-1/a}
    \end{pmatrix}.
\end{equation}
As a consequence, by \eqref{newrhp}, we obtain 
\begin{equation}\label{Mform}
    M(z)=D_{\infty}^{-r_0\sigma_3}\begin{pmatrix}
        a_1\frac{\sqrt{F_{1}'(z)}}{F_1(z)}& a_1\frac{\sqrt{F_{2}'(z)}}{F_2(z)}\\
        a_2\frac{\sqrt{F_{1}'(z)}}{F_1(z)-1/a}&a_2\frac{\sqrt{F_{2}'(z)}}{F_2(z)-1/a}
    \end{pmatrix}
D(z)^{r_0\sigma_3}.
\end{equation}

\subsection{Local parametrices}
Local parametrices are defined in a neighbourhood of the branch points $z_j$ and are constructed in the discs $\mathbb{D}(z_j, \delta)$ for $j \in \{1,2\}$.
We take 
\begin{equation}
    D=\mathbb D (z_{1},\delta)\cup \mathbb D (z_{2},\delta).
\end{equation}
For $\delta>0$ small, the local parametrices are defined in $D$, with jump matrices that agree with $J_T$ given in \eqref{def of JT}. In addition, it agrees with $M$ on the boundary of the discs up to an error $\mathcal{O}(n^{-1})$ as $n\rightarrow\infty$. Specifically, we introduce $P(z)$ according to the following RH problem.
\begin{rhproblem}
$P$ satisfies the following RH problem. 
    \begin{itemize} 
    \item $P(z)$ analytic in $D\setminus \Sigma_P$ where $\Sigma_P:=\Sigma_T\cap D$. (Recall $\Sigma_T$ in RH problem \ref{RHPT}.) 
      \item $P_{+}=P_- J_P$ on $\Sigma_P$ with 
    \begin{equation}
    J_P:= 
    \begin{cases}
    \begin{pmatrix}
       0&z^{-r_0}\\
        -z^{r_0}&0
    \end{pmatrix}, & \quad \mathrm{on}\hspace{0.2cm} \Gamma_0\cap D,
    \smallskip 
    \\ 
    \begin{pmatrix}
        1&z^{-r_0}e^{-N \mathcal{U}_{0}(z)}\\
        0&1 
    \end{pmatrix}, & \quad \mathrm{on}\hspace{0.2cm} \Gamma\setminus \Gamma_0 \cap D,
    \smallskip 
    \\ 
    \begin{pmatrix}
            1&0\\
           z^{r_0}e^{N\varphi(z)}&1 
        \end{pmatrix}, & \quad \mathrm{on}\hspace{0.2cm} \partial L_\pm \cap D.
    \end{cases}
    \end{equation} 
      \item $P(z)$ matches with the outer parametrix $M$ that is 
\begin{equation}\label{match}
          P(z)=M(z)\left(1+\mathcal{O}(n^{-1})\right) \quad n\rightarrow \infty,
      \end{equation}
   uniformly as $z\in \partial D$. \end{itemize}
\end{rhproblem}

The functions $\varphi(z)=\mathrm{const}\times (z-z_{j})^{3/2}\left(1+\mathcal{O}(z-z_{j})^{1/2}\right)$ and the local parametrices of $z^{(r_0/2)\sigma_3}P(z) z^{(-r_0/2)\sigma_3}$ can be constructed out of Airy functions; see for instance \cite{BBLM15, D00}. We omit the construction here, as the precise form is not necessary for our  purpose.

\subsection{Final transformation}

\begin{definition}
We make the final transformation 
    \begin{equation}
    S(z):=\begin{cases}
        T(z)M(z)^{-1}, &\quad z\in \mathbb C \setminus\left( D \cup \partial L_{\pm}\cup \Gamma\right)
        \smallskip 
        \\
        T(z)P(z)^{-1}, & \quad z\in D \setminus( \partial L_{\pm}\cup \Gamma).
    \end{cases}
\end{equation}
\end{definition}
\begin{figure}
    \centering
    \includegraphics[width=0.5\linewidth]{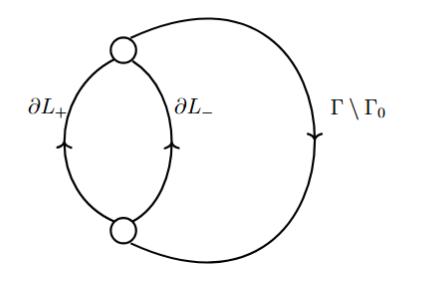}
    \caption{The jump contour $\Sigma_S$}
    \label{JSjump}
\end{figure}

The jump matrices of $T$ and $M$ coincide on $\Gamma_0$, while the jump matrices of $T$ and $P$ coincide on $D$. It follows that $S(z)$ has an analytic continuation to $\mathbb{C} \setminus \Sigma_S$, as shown in Figure \ref{JSjump}. The matching condition \eqref{match} ensures that $PM^{-1} = I_2 + \mathcal{O}(N^{-1})$ uniformly as $N \to \infty$ on $\partial D$.
Thus, as $N \to \infty$,   
\begin{equation}
    S_+(z)=S_{-}(z)\left(I+\mathcal{O}(N^{-1}) \right), 
\end{equation}
uniformly on $\partial D$. 
On the rest of the contour $\Sigma_S$, as $N \to \infty$, there exists $c>0$ such that 
\begin{equation}
    S_+(z)=S_{-}(z)\left(I+\mathcal{O}(e^{-cN}) \right), 
    \end{equation}
    uniformly on $\partial D$. 
Since $ S(z)\rightarrow I_2$ as $ z \to \infty$, standard arguments yield that as $N \to \infty,$ 
\begin{equation}\label{Sasy}
    S(z)=I+\mathcal{O} \Big(\frac{1}{N(1+|z|)}\Big), 
\end{equation}
uniformly in $z\in \mathbb C\setminus \Sigma_S$.

\subsubsection{Proof of Theorem \ref{thm1}}

\begin{proof}
We take $z$ in a compact subset $K$ of $\overline{\mathbb C}\setminus\Gamma_0$. We may further assume that $L_{\pm}$ and $\mathbb D( z_j,\delta)$ are such that $K$ is in their exterior.

Following the transformations $$Y\mapsto X \mapsto T \mapsto S$$ we obtain for $z\in K$, 
\begin{equation}\label{Yasy}
    Y(z)=e^{N\frac{\ell_0}{2}\sigma_3}S(z) M(z)e^{N(g(z)-\frac{\ell_0}{2})\sigma_3},
\end{equation}
where $S(z)$ satisfies \eqref{Sasy}. Now recalling $P_{n,N}=Y_{1,1}$ we obtain the asymptotic formula 
\begin{equation}\label{strongasy2}
\begin{aligned}
     P_{n,N}(z)=e^{Ng(z)}M_{1,1}(z)(1+\mathcal{O}(N^{-1})), \quad z\in \mathbb C \setminus \left(D\cup L_{\pm}\right),
    \end{aligned}
\end{equation}
uniformly for  $z\in K$. By computing the $(1,1)$-entry of the matrix $M$ given in \eqref{Mform}, we obtain \eqref{strongasy}.
\end{proof}

\subsubsection{Proof of Theorem \ref{thm2}}
\begin{proof}

The proof is immediate from the strong asymptotic formula \eqref{strongasy}.  By \eqref{gpar2}, we have $M_{1,1}(z)\neq 0$ for $z\in \mathbb C \setminus \Gamma_0$. Hence, the zeros of $P_{n,N}$ accumulate on $\Gamma_0$ as $N\rightarrow\infty$. 

From \eqref{strongasy} we further conclude that 

\begin{equation}
    \lim _{n\rightarrow\infty}\frac{1}{n}\log |P_{n,N}(z)|=\Re g(z)=\int \log|z-s| \, d\mu_0(s), 
\end{equation}
uniformly for compact subsets of $\mathbb C \setminus \Gamma_0$. It is well known \cite{ST97} that this implies $\mu_0$ is the weak limit of the normalized zero counting measure; see also \cite{KKL24}.
\end{proof}

\section{Proof of Theorem \ref{norma}}\label{orthsection} 

Recall $\ell_{\mathrm{2D}}$ in \eqref{frost1}.
We first relate the constants $\ell_{\mathrm{2D}}$ and $\ell_0$ appearing in Theorem \ref{thmmoth}. 
We need a preliminary lemma. Recall $\mathcal{U}$ and $\mathcal{U}_0$ from \eqref{defU} and \eqref{defU0} and their scalings \eqref{def of widetilde U U0}. 
\begin{lemma}
Let $s=\frac{b \rho^2}{a z}$, then as  $z\rightarrow 0$ we have
    \begin{equation}\label{pot3}
    \begin{aligned}
        2\widetilde{\mathcal{U}}_0(z)= 2(1+Q_1+Q_0)&\log\Big(1+\frac{b\rho^2}{a}\Big)+\widetilde{\mathcal{U}}(z)+\widetilde{\mathcal{U}}(s)\\&
      -(1+Q_0+Q_1) \log(1+|s|^2)+\mathcal{O}(z).
       \end{aligned}
    \end{equation}
\end{lemma}

\begin{proof}
    Recall we have from \eqref{defU0} $2\mathcal{U}_0(z)=2\Re \int_{z_1}^z\sqrt{R(s)}ds$. Denote $F_1(z)=v_1$. 
Then following the same steps of Proposition \ref{traj1} we have
  \begin{align} 
    \begin{split}
 2\int_{z_1}^{z}\sqrt{R(s)}\, ds= 2\int_{z_1}^{z}(S_{2}(s)-S_{1}(s)) \, ds
    =2\int_{v_1}^{\mathrm{deck}(v_1)}\frac{f(1/u)}{1+f(1/u)f(u)}\, d(f(u)). 
    \end{split}
    \end{align} 
    Also recall $z_0=f(1)$, then as before we decompose the last integral as $I_1+I_2$, where
\begin{equation}
    I_1=\int_{v_1}^{z_0}\frac{f(1/u)}{1+f(1/u)f(u)}\,d(f(u)),\qquad  I_2=\int_{z_0}^{\mathrm{deck}(v_1)}\frac{f(1/u)}{1+f(1/u)f(u)}\,d(f(u)).
\end{equation}
By definition, we have
\begin{equation}\label{I_1}
    I_1=-\int_{z_0}^{z}S_1(s)\,ds. 
\end{equation}

We turn our attention to $I_2$. We have 
    \begin{align}\label{I_2}
    \begin{split}
    &I_2=\int_{1}^{\mathrm{deck}(v_1)}d\log (1+f(1/u)f(u))+\frac{f(u)f'(1/u)}{u^2(1+f(1/u)f(u) ) } \, du
    \\
    &=\Big[\log (1+f(1/u)f(u))\Big]_{v_0}^{\mathrm{deck}(v_1)}+\int_{1/\mathrm{deck}(v_1)}^{1}\frac{f(1/x)f'(x)}{1+f(1/x)f(x)}\,dx.
    \end{split}
    \end{align}
The above steps are similar to the proof of Proposition \ref{traj1}.  We now deviate from it.   As $z\rightarrow 0$ we have $v_1\rightarrow 1/b$ and $\mathrm{deck}(v_1)\rightarrow\infty$. However we have from \eqref{confm} 
\begin{equation}\label{lim1}\lim_{u\rightarrow\infty}f(1/u)f(u)=\frac{\rho^2 b}{a}.\end{equation}

Finally another expansion near $0$ shows, as $ z\rightarrow 0$, 
\begin{equation}\label{lim2} 
f\Big(\frac{1}{\mathrm{deck}(F_1(z))}\Big) =\frac{\rho^2 b}{a z}+\mathcal{O}(1).
\end{equation}
Recall $s= \frac{\rho^2 b}{a z}$ then as $z\rightarrow 0$, then plugging \eqref{lim1} and \eqref{lim2} in \eqref{I_2}, we find that RHS of \eqref{I_2} can be written as 
\begin{equation}\label{I3}
    \log\Big(1+\frac{\rho^2 b}{a}\Big)-\log (1+|z_0|)^2-\int_{z_0}^{s}S_{1}(t)\, dt+\mathcal{O}(z).
\end{equation}
We also recall the definition of $\mathcal{U}$ from \eqref{defU}. Then
combining \eqref{I3} and \eqref{I_1} and taking the real part with the above definition we obtain 
\begin{equation}
   2 \mathcal{U}_0(z)= 2\log\Big(1+\frac{\rho^2 b}{a}\Big)-\log(1+|s|^2)+\mathcal{U}(s)+\mathcal{U}(z).
\end{equation}
Scaling by $(1+Q_0+Q_1)$ yields the result.
\end{proof}

\begin{proposition}\label{lemmrela}
 We have the following relation between $\ell_{\mathrm{2D}}$ and $\ell_0$: 
  \begin{equation}\label{relation}
    \begin{aligned}
     \ell_0   =\ell_{\mathrm{2D}} & +(1+Q_0)\log w + \Re g(0)
     \\
     &\quad -(1+Q_1)\log (1+Q_1)-Q_0\log Q_0 +(1+Q_0+Q_1)\log \left(1+Q_0+Q_1\right).
   \end{aligned}
   \end{equation}
\end{proposition}

\begin{proof}

By \eqref{def of widetilde U U0}, \eqref{defU0} and \eqref{c2}, the LHS of \eqref{pot3} can be written as 
\begin{equation*}
    \begin{aligned}
2\widetilde{\mathcal{U}}_0(z)=4U^{\mu_0}(z)+2(1+Q_1)\log |z|+2(1+Q_0)\log |z+1/w|- 2Q_1\log |z-w|+2\ell_0.
\end{aligned}
\end{equation*}
Then by \eqref{defg}, we have  
\begin{equation}\label{cmp1}
2\widetilde{\mathcal{U}}_0(z)= 2(1+Q_1)\log |z|-2(1+Q_0)\log w- 2Q_1\log w-4 \Re g(0)+2 \ell_0+\mathcal{O}(z),
\end{equation}
as $z \to 0$.

Next, we compute the asymptotic behaviour of the RHS of \eqref{pot3} as $z \to 0$. 
By \eqref{defU} we can write
$$\widetilde{\mathcal{U}}(z)=2U^{\mu_0}(z)-2Q_1\log |z-w|+(1+Q_0+Q_1)\log(1+|z|^2)+\ell_{2D}.$$
Therefore, we have  
\begin{equation}\label{RHS1}
\widetilde{\mathcal{U}}(z)=-2Q_1\log w-2\Re g(0)+\ell_{2D}+\mathcal{O}(z),
\end{equation}
as $ z \to 0$. On the other hand, we have  
\begin{equation}
\begin{aligned}\label{RHS2}
\widetilde{\mathcal{U}}(s)
=2 Q_0\log |s|+\ell_{\mathrm{2D}}+\mathcal{O}(1/s),
\end{aligned}
\end{equation}
as $s \to \infty$. 
Hence combining \eqref{RHS1} and \eqref{RHS2} we find that the RHS of \eqref{pot3} has the expansion
    \begin{equation}\label{comp2}
    \begin{aligned}
        -2 Q_1 \log w-2 \Re g(0)&+2\ell_{\mathrm{2D}}+2(1+Q_1)\log |z|-2(1+Q_1)\log \frac{b\rho^2}{a}\\&+2(1+Q_0+Q_1)\log \Big(1+\frac{b\rho^2}{a}\Big)+\mathcal{O}(z),
   \end{aligned}
         \end{equation}
as $z \to 0$.  
Now substituting \eqref{a b ratio Q0Q1} and
equating \eqref{cmp1} with \eqref{comp2} gives \eqref{relation}.
\end{proof}

Recall that $h_{n,N}$ is the squared norm in the planar orthogonality \eqref{F11}. For convenience in proving the next lemma, we also denote
\begin{equation} \label{h0}
    \oint_{\Gamma}  P_{n,N}^2(z)  \frac{(z-w)^{NQ_1}}{(1+zw)^{N+NQ_0}}\frac{dz}{z^{n+NQ_1}}=  \widetilde{h}_{n,N}. 
\end{equation} 

\begin{lemma}\label{rela1}
We have 
\begin{equation}\label{h1}
    \widetilde{h}_{n,N}=-\frac{2i}{G_{n,N}}P_{n+1,N}(0)\, h_{n,N}, 
\end{equation}
where $G_{n,N}$ is defined by \eqref{defbeta}. 
\end{lemma}

\begin{proof}

We denote by $\mathcal{P}$ be the vector space of polynomials of all degrees, and $\mathcal{P}_m$ be the vector space of polynomials of degree $\leq m$.  For $p,q\in \mathcal{P}$ we define two pairings
\begin{align}
\label{scp1}
    \langle p,q\rangle_{pl}& = \int_{\mathbb C}p(z)\overline{q(z)}\frac{|z-w|^{2NQ_1}}{(1+|z|^2)^{N(1+Q_0+Q_1)+1}} \, dA(z),
    \\
    \langle p,q\rangle_{co}& =\oint_{\Gamma}p(z)q(z)\frac{(z-w)^{NQ_1}}{(1+wz)^{N+NQ_0}z^{n+NQ_1}} \, dz.   \label{scp2}
\end{align}
The pairings $\langle p,q\rangle_{pl},  \langle p,q\rangle_{co}$ are associated with planar orthogonality and non-Hermitian contour orthogonality respectively. Observe that \eqref{scp1} only depends on $N$, while \eqref{scp2} depends both on $N$ and $n$, although we do not include these in the notation. We define for $i,j=0,1,2,\dots$
\begin{equation}
\mu_{i,j} = \langle z^i, z^j \rangle_{pl}, \qquad \nu_i = \langle z^i, 1 \rangle_{co}.
\end{equation}

By Proposition \ref{contorth} the monic orthogonal polynomial $P_{n,N}$ of degree $n$ satisfies two determinantal formulas,
\begin{equation}\label{det1}
P_{n,N}(z) = \frac{1}{\det[\mu_{i,j}]_{i,j=0}^{n-1}} \begin{vmatrix}
\mu_{0,0} & \cdots & \mu_{0,n-1} & 1 \\
\mu_{1,0} & \cdots & \mu_{1,n-1} & z \\
\vdots & \ddots & \vdots & \vdots \\
\mu_{n,0} & \cdots & \mu_{n,n-1} & z^n
\end{vmatrix}
\end{equation}
\begin{equation}\label{det2}
\hspace*{1.25cm} = \frac{1}{\det[\nu_{i+j}]_{i,j=0}^{n-1}} \begin{vmatrix}
\nu_0 & \cdots & \nu_{n-1} & 1 \\
\nu_1 & \cdots & \nu_n & z \\
\vdots & \ddots & \vdots & \vdots \\
\nu_n & \cdots & \nu_{2n-1} & z^n
\end{vmatrix}.
\end{equation}
One obtains from \eqref{det1} and \eqref{det2} by standard theory of orthogonal polynomials
\begin{equation}\label{detrep1}
h_{n,N} = \langle P_{n,N}, z^n \rangle_{pl} = \frac{\det[\mu_{i,j}]_{i,j=0}^n}{\det[\mu_{i,j}]_{i,j=0}^{n-1}}, 
\end{equation}
\begin{equation}\label{detrep2}
\widetilde{h}_{n,N} = \langle P_{n,N}, z^n \rangle_{co} = \frac{\det[\nu_{i+j}]_{i,j=0}^n}{\det[\nu_{i+j}]_{i,j=0}^{n-1}}.
\end{equation}

Lemma \ref{contorth0} tells us that
\begin{equation}\label{screl}
\langle z^i, (z-w)^j \rangle_{pl} = \frac{G_{j,N}}{2i} \langle z^i, (1 + wz)^j z^{n-j-1} \rangle_{co}, \quad i,j = 0, \ldots, n.
\end{equation}
The polynomials $(z-w)^{j}, j=0,1,\dots n,$ is a basis of $\mathcal{P}_n$. If we change our basis to  $z^{j}, j=0,1,\dots n$ the change of basis matrix is a triangular matrix with ones on the diagonal. As a consequence we have the determinant to be $1$ for this matrix.  This along with \eqref{screl} gives us  
\begin{align}
\begin{split}
\label{frep1} 
\det[\mu_{i,j}]_{i,j=0}^n &=\det[\langle z^i,(z-w)^j\rangle_{pl}  ]_{i,j=0}^n\\& =\prod_{j=0}^n \frac{G_{j,N}}{2i}  \det[\langle z^i, (1 + wz)^j z^{n-j-1} \rangle_{co}]_{i,j=0}^n,
\end{split}
\\
\begin{split}
\label{frep2} 
\det[\mu_{i,j}]_{i,j=0}^{n-1}  &=\det[\langle z^i,(z-w)^j\rangle_{pl}  ]_{i,j=0}^{n-1}\\&= \prod_{j=0}^{n-1}  \frac{G_{j,N}}{2i} \det[\langle z^i, (1 + wz)^j z^{n-j-1} \rangle_{co}]_{i,j=0}^{n-1}. 
\end{split}
\end{align} 
Therefore, by \eqref{detrep1}, \eqref{frep1} and \eqref{frep2}
we obtain 
\begin{equation}\label{detrep3}
h_{n,N} = \frac{G_{n,N}}{2i}  \frac{\det[\langle z^i, (1 + wz)^j z^{n-j-1} \rangle_{co}]_{i,j=0}^n}{\det[\langle z^i, (1 + wz)^j z^{n-j-1} \rangle_{co}]_{i,j=0}^{n-1}}.
\end{equation}

As we have seen in the proof of Theorem \ref{contorth} the family of polynomials $(1+wz)^{j}z^{n-j-1}, j=0,1,\dots, n-1$, is a basis of $\mathcal{P}_{n-1}$. As before if we change to the  basis  $z^{n-1}, z^{n-2},\dots ,z,1$, the change of basis matrix is lower triangular with ones on the diagonal. As a result, we obtain 
\begin{equation}
\begin{aligned}\label{sim1}
\det[\langle z^i, (1 + wz)^j z^{n-j-1} \rangle_{co}]_{i,j=0}^{n-1}
&= \det[\langle z^i,  z^{n-j-1}  \rangle_{co}]_{i,j=0}^{n-1}\\&=(-1)^{n(n-1)/2} \det[\langle z^i, z^{j} \rangle_{co}]_{i,j=0}^{n-1}\\&= (-1)^{n(n-1)/2} \det[\nu_{i+j}]_{i,j=0}^{n-1}.
\end{aligned}
\end{equation}
For the second identity we reordered the columns of the matrix in the determinant.

 The functions $(1+wz)^{j}z^{n-j-1}, j=0,1,\dots, n$, (for $j=n$ it is not a polynomial), correspond in the same way as above to the basis $z^{n-1},\dots,z^{-1}$ and similar to \eqref{sim1}, we obtain 
\begin{equation}
\begin{aligned}\label{sim2}
\det[\langle z^i, (1 + wz)^j z^{n-j-1} \rangle_{co}]_{i,j=0}^{n}
&= \det[\langle z^i, z^{n-j-1} \rangle_{co}]_{i,j=0}^{n}\\&=(-1)^{n(n+1)/2} \det[\langle z^i, z^{j-1} \rangle_{co}]_{i,j=0}^{n}\\&= (-1)^{n(n+1)/2} \det[\nu_{i+j-1}]_{i,j=0}^{n}.
\end{aligned}
\end{equation}

We now relate $ \det[\nu_{i+j-1}]_{i,j=0}^{n}$ to $P_{n+1,N}(0)$. Recall \eqref{scp1} does not depend on $n$ and it is still valid if $n$ is replaced by $n+1$. It is not true for the formula \eqref{det2} since the pairing
\eqref{scp2} depends on $n$, and so do the moments $\nu_i$. In fact, by \eqref{scp2} the moments simply
go down by one unit if we go from $n$ to $n+1$, and the analogue of the formula \eqref{det2} is
\begin{equation}
P_{n+1,N}(z) = \frac{1}{\det[\nu_{i+j-1}]_{i,j=0}^n} \begin{vmatrix}
\nu_{-1} & \cdots & \nu_{n-1} & 1 \\
\nu_0 & \cdots & \nu_n & z \\
\vdots & \ddots & \vdots & \vdots \\
\nu_n & \cdots & \nu_{2n} & z^{n+1}
\end{vmatrix}.
\end{equation}
Evaluating at $z=0$,
\begin{equation}\label{p0}
P_{n+1,N}(0) = \frac{1}{\det[\nu_{i+j-1}]_{i,j=0}^n} \begin{vmatrix}
\nu_{-1} & \cdots & \nu_{n-1} & 1 \\
\nu_0 & \cdots & \nu_n & 0 \\
\vdots & \ddots & \vdots & \vdots \\
\nu_n & \cdots & \nu_{2n} & 0
\end{vmatrix}
= (-1)^{n+1} \frac{\det[\nu_{i+j}]_{i,j=0}^n}{\det[\nu_{i+j-1}]_{i,j=0}^n}.
\end{equation}
Multiplying \eqref{p0} and \eqref{detrep3} and using \eqref{detrep2} we obtain \eqref{h1}. 
\end{proof}

We are now ready to prove Theorem~\ref{norma}.

\begin{proof}[Proof of Theorem~\ref{norma}]

At first comparing \eqref{h0} with \eqref{RHnorm} we obtain 
\begin{equation}\label{h2}
    \widehat{h}_{n,N}:=\widetilde{h}_{n,N} w^{N+NQ_0}.
\end{equation}
Then the identity \eqref{h1} in Lemma \ref{rela1} can be written as 
\begin{equation}\label{hasy2}
    h_{n,N}=-\frac{1}{2i}G_{n,N}P_{n+1,N}(0)^{-1}w^{-N-NQ_0}\widehat{h}_{n,N}.
\end{equation}
It remains to find the asymptotic behaviour of $G_{n,N},P_{n+1,N}(0)$ and $\widehat{h}_{n,N}$ as $N\rightarrow\infty$ with $n=N+r_0$ fixed. We start with the latter quantity.

Due to \eqref{rhnorm} and \eqref{Yasy} we have 
\begin{equation}\label{hathasy}
\begin{aligned}
    \widehat{h}_{n,N}&=-2\pi i\lim_{z\rightarrow\infty}z^{n+1}\left[e^{N\frac{\ell_0}{2}\sigma_3}S(z)M(z)e^{N(g(z)-\frac{\ell_0}{2})\sigma_3}\right]_{1,2}\\&
   = -2\pi i e^{N\ell_0}\lim_{z\rightarrow\infty}z^{n+1}e^{-Ng(z)}\left[S(z)M(z)\right]_{1,2},
\end{aligned}
\end{equation}
where $S$ satisfies \eqref{Sasy}. From \eqref{gsymp} it follows that $ze^{-g(z)}\rightarrow1$ as $z\rightarrow\infty$. Using \eqref{newrhp} and the fact $n=N+r_0$ we conclude from \eqref{hathasy} that 
\begin{align}
    \widehat{h}_{n,N}&=-2\pi i e^{N\ell_0}\lim_{z\rightarrow\infty}z^{r_0+1}\left[S(z)D_{\infty}^{-r_0\sigma_3}N(z)D(z)^{r_0\sigma_3}\right]_{1,2} \notag \\&
    =-2\pi i e^{N\ell_0}\lim_{z\rightarrow\infty}z^{r_0+1}D(z)^{-r_0}\left[S(z)D_{\infty}^{-r_0\sigma_3}N(z)\right]_{1,2} \notag \\&
     =-2\pi i e^{N\ell_0}D^{-r_0}_{\infty}\lim_{z\rightarrow\infty}z \left[S(z)D_{\infty}^{-r_0\sigma_3}N(z)\right]_{1,2}. \label{1as}
\end{align}
To obtain the last equality we used Lemma \ref{defD} (3) which tells us $D(z)/z\rightarrow D_{\infty}$ as $z\rightarrow\infty$.

From \eqref{gpar}, \eqref{gpar3} and a direct calculation based on \eqref{invasy}
we find 
\begin{equation}
    N_{1,2}(z)=i\sqrt{\rho}\frac{\sqrt{F_2'(z)}}{F_2(z)}=-\rho \frac{\sqrt{a(b-a)}}{z}+\mathcal{O}(z^{-2})\quad \mathrm{as}\hspace{0.1cm} z\rightarrow\infty,
\end{equation}
while $N_{2,2}(z)=1+\mathcal{O}(z^{-1})$ because of the asymptotic property \eqref{Nasy1} in the RH problem \ref{RHN} for $N$.
The asymptotic formula \eqref{Sasy} for $S$ tells us $S_{1,1}(z)\rightarrow 1$ and $z S_{1,2}(z)=\mathcal{O}(N^{-1})$ as $z\rightarrow\infty$. Thus we find 
\begin{align*}
\lim_{z \rightarrow \infty} z \left[ S(z) D_{\infty}^{-r_0 \sigma_3} N(z) \right]_{1,2} 
&= \lim_{z \rightarrow \infty} z \left[ 
    S_{1,1}(z) D_{\infty}^{-r_0} N_{1,2}(z) 
    + S_{1,2}(z) D_{\infty}^{r_0} N_{2,2}(z) 
\right] \\
&= -D_{\infty}^{-r_0} \rho \sqrt{a(b-a)} + \mathcal{O}(N^{-1}).
\end{align*}
Combining this with \eqref{1as} and Lemma \ref{defD} (3) (which tells us $D_{\infty}^{-2}=\rho a$) we obtain 
\begin{equation}\label{2as}
    \widehat{h}_{n,N}=2\pi i e^{N\ell_0}(\rho a)^{r_0}\rho\sqrt{a(b-a)}\left(1+\mathcal{O}(N^{-1})\right).
\end{equation}

Now, it remains to compute the asymptotic expansions of $P_{n+1,N}(0)$ and $G_{n,N}$.
We use the asymptotic formula \eqref{strongasy} with $z=0$ to obtain 
$$P_{n+1,N}(0)=i\left(\frac{\rho}{F_{1}(0)}\right)^{r_0+1}\frac{\sqrt{\rho F_1'(0)}}{F_1(0)}\left(1+\mathcal{O}(N^{-1})\right).$$
We recall $F_1$ is the inverse function of $f$, and so $F_1(0)=1/b$ as $f(1/b)=0$. From \eqref{confm} one obtains $f'(1/b)=-\frac{b-a}{b^3\rho}$, therefore, by the rule for the derivative of the inverse function one has $F_1'(0)=-\frac{b^3\rho}{b-a}$. It then follows that 
\begin{equation}\label{p0exp}
    P_{n+1,N}(0)=-\rho \left(\rho b\right)^{r_0}\sqrt{b(b-a)}e^{Ng(0)}\left(1+\mathcal{O}(N^{-1})\right).
\end{equation}

By applying Stirling's formula in \eqref{defbeta}, for $n=N+r_0$, a straightforward calculation yields
\begin{equation}\label{gexp1}
    G_{n,N}=\sqrt{\frac{2\pi}{N}}\frac{Q_0^{NQ_0-r_0-\frac{1}{2}}(1+Q_1)^{N(1+Q_1)+r_0+\frac{1}{2}}}{(1+Q_0+Q_1)^{N(1+Q_0+Q_1)-\frac{1}{2}}}\left(1+\mathcal{O}(N^{-1})\right).
\end{equation}
Then inserting \eqref{gexp1}, \eqref{p0exp} and \eqref{2as} in \eqref{hasy2} we find 
\begin{align*}
h_{n,N} &= \pi \sqrt{\frac{2\pi}{N}} 
\frac{
    Q_0^{NQ_0 - r_0 - \frac{1}{2}} 
    (1 + Q_1)^{N(1 + Q_1) + r_0 + \frac{1}{2}}
}{
    (1 + Q_0 + Q_1)^{N(1 + Q_0 + Q_1) - \frac{1}{2}}
}
\\
&\quad \times \left( \frac{a}{b} \right)^{r_0 + \frac{1}{2}} 
e^{N(\ell_0 - g(0))} 
w^{-N - NQ_0} 
\left(1 + \mathcal{O}(N^{-1})\right).
\end{align*}

Finally, with \eqref{relation} and \eqref{a b ratio Q0Q1} this greatly simplifies to \eqref{norma2}. This completes the proof of Theorem \ref{norma}. 
\end{proof}

\subsection*{Acknowledgements}
Sung-Soo Byun was supported by the New Faculty Startup Fund at Seoul National University and by the National Research Foundation of Korea grant (RS-2023-00301976, RS-2025-00516909).
Funding support to Peter Forrester for this research was through the Australian Research Council Discovery Project grant DP250102552. Arno Kuijlaars is supported by long term structural funding-Methusalem grant of
the Flemish Government, and by FWO Flanders project G.0910.20.
Sampad Lahiry acknowledges financial support from the International Research Training Group (IRTG) between KU Leuven and University of Melbourne and Melbourne Research Scholarship of University of Melbourne.

\appendix

\end{document}